\newcommand{\blue}[1]{\textcolor{blue}{#1}}
\newcommand{\abs}[1]{\lvert#1\rvert}    
\newcommand{\br}[1]{\langle #1\rangle}
\newcommand{\bdot}{\bm\cdot}
\newcommand\Hy{\mathcal{H}}
\newcommand\cHy{\widehat{\mathcal{H}}}
\newcommand\Z{\mathbb{Z}}
\newcommand\R{\mathbb{R}}
\newcommand\C{\mathbb{C}}
\DeclareMathOperator{\spn}{span}
\DeclareMathOperator{\End}{End}
\DeclareMathOperator{\Hom}{Hom}
\DeclareMathOperator{\GL}{GL}
\DeclareMathOperator{\Sol}{Sol}
\DeclareMathOperator{\col}{col}
\newcommand{\id}{\mathrm{id}}
\newcommand{\onto}{\twoheadrightarrow}
\newcommand{\qand}{\quad\text{and}\quad}
\newcommand{\qqand}{\qquad\text{and}\qquad}
\newcommand{\Cc}{\mathcal{C}}
\newcommand{\aff}[1]{\widetilde{#1}}
\newcommand{\hr}{\aff{\alpha}}
\newcommand{\aW}{\aff{W}}
\newcommand{\aSigma}{\aff{\Sigma}}
\newcommand{\Des}{D}
\newcommand{\aDes}{\aff{D}}
\newcommand{\asigma}{\aff{\sigma}}
\newcommand{\aCc}{\aff{\Cc}}
\newcommand{\aDelta}{\aff{\Delta}}
\newcommand{\aSol}{\aff{\Sol}}
\newcommand{\Stb}[1]{\overline{#1}}
\newcommand{\sSigma}{\Stb{\Sigma}}
\newcommand{\ssigma}{\Stb{\sigma}}
\newcommand{\sCc}{\Stb{\Cc}}
\newcommand{\sSol}{\Stb{\Sol}}
\date{\today}
\theoremstyle{plain}
\newtheorem{thm}{Theorem}
\newtheorem{proposition}[thm]{Proposition}
\newtheorem{cor}[thm]{Corollary}
\newtheorem{prp}[thm]{Proposition}
\theoremstyle{definition}
\newtheorem{rmk}[thm]{Remark}
\newtheorem*{rmk*}{Remark}
\newtheorem{ex}[thm]{Example}
\author[M. Aguiar]{Marcelo Aguiar}
\address[Aguiar]{
	Department of Mathematics\\
        Cornell University\\
        Ithaca, NY\, 14853 
        }
\email{maguiar@math.cornell.edu}
\urladdr{http://www.math.cornell.edu/{\small$\sim$}maguiar}
\author[T. K. Petersen]{T. Kyle Petersen}
\address[Petersen]{
	Department of Mathematical Sciences\\
        DePaul University\\
Chicago, IL\, 60614 
        }
\email{tpeter21@depaul.edu}
\urladdr{http://www.math.depaul.edu/tpeter21}
\thanks{Aguiar supported in part by NSF grant DMS-1001935.}
\title[The Steinberg torus and the Coxeter complex]{The Steinberg torus of a Weyl group as a module over the Coxeter complex}
\keywords{Hyperplane arrangement, Coxeter complex, Steinberg torus, crystallographic root system, Weyl group, Tits product, Solomon's descent algebra, affine descent.}
\subjclass[2010]{05E99, 17B22 , 20F55, 52C35}
\begin{document}

\begin{abstract}
Associated to each irreducible crystallographic root system $\Phi$,
there is a certain cell complex structure on the torus obtained as the quotient of the ambient space by the coroot lattice of $\Phi$. This is the Steinberg torus. A main goal of this paper is to exhibit a module structure on (the set of faces of) this complex over the (set of faces of the) Coxeter complex of $\Phi$.
The latter is a monoid under the Tits product of faces. The module structure is obtained from
geometric considerations involving affine hyperplane arrangements. 
As a consequence, a module structure is obtained on the space spanned
by affine descent classes of a Weyl group, over the space spanned by ordinary descent classes. The latter constitute a
subalgebra of the group algebra, the classical descent algebra of Solomon. We provide
combinatorial models for the module of faces when $\Phi$ is of type $A$ or $C$.
\end{abstract}


\maketitle

\setcounter{tocdepth}{1}
\tableofcontents

\section*{Introduction}

Let $\Phi$ be a root system and $W$ the associated Coxeter group. 
The \emph{descent set} of an element $w\in W$ keeps track of the simple roots whose image under $w$ is negative. When $\Phi$ is irreducible and crystallographic, a notion of \emph{affine descent} may be defined. This is due to Cellini \cite[Section 2]{Cel:1995}.
The affine descent set enlarges the ordinary descent set by recording the behavior of $w$ on the (unique) highest root of $\Phi$.

Lumping group elements according to ordinary descent sets leads to \emph{Solomon's descent algebra} (or ring)~\cite{Sol:1976}, denoted in this paper by $\Sol(\Phi)$. 
 It is a subring of the group ring $\Z W$. 
One goal of this paper is to arrive at a related algebraic structure on the additive subgroup of 
$\Z W$ spanned by affine descent classes and denoted $\sSol(\Phi)$.
 We show in Theorem~\ref{thm:main} that the multiplication of $\Z W$ turns $\sSol(\Phi)$ into a left module over $\Sol(\Phi)$.

This module structure is part of a family of such structures introduced by Moszkowski~\cite{Mos:1989}; see Remark~\ref{rmk:mosz}.
Our main interest lies however in more general geometric considerations.
We follow the approach of Tits (in his appendix~\cite{Tit:1976} to Solomon's paper~\cite{Sol:1976}), as developed by Bidigare~\cite{Bid:1997} and Brown~\cite[Section 4.8]{Bro:2000}.
These works relate the algebraic structure of $\Sol(\Phi)$ to the geometric structure of the Coxeter complex $\Sigma(\Phi)$. This relation is based on the following key points:
\begin{enumerate}[(i)]
\item the set $\Sigma(\Phi)$ is a monoid under the Tits product; 
\item the group $W$ acts on $\Sigma(\Phi)$ and the $W$-orbits are in bijection with descent sets;
\item $\Sol(\Phi)$ is (anti-isomorphic to) the subring of $W$-invariants in the monoid ring $\Z\Sigma(\Phi)$.
\end{enumerate}
Work of Dilks, Petersen, and Stembridge \cite{DPS:2009} uncovered a relation 
analogous to (ii) between affine descents and the structure of the \emph{Steinberg torus} $\sSigma(\Phi)$. This cell complex was introduced by Steinberg in~\cite{Ste:1968}.
It is obtained as the quotient of the affine Coxeter complex $\aSigma(\Phi)$ by the coroot lattice $\Z\Phi^\vee$. We provide analogs of the remaining points.
We show that $\Sigma(\Phi)$ acts on $\aSigma(\Phi)$, and that this action passes through the quotient by $\Z\Phi^\vee$ to an action on the Steinberg torus $\sSigma(\Phi)$. Finally, we show that the elements of the module $\sSol(\Phi)$ arise as the $W$-invariants in the module $\Z\sSigma(\Phi)$.

The action on affine faces can be defined in the general context of real affine hyperplane arrangements. Such an arrangement splits the ambient space into a set of affine faces. The hyperplane \emph{at infinity} is similarly decomposed into a set of \emph{celestial} faces. The latter set is a semigroup under the Tits product and the former is a right module over it. In the case of the affine arrangement of $\Phi$, celestial faces constitute the Coxeter complex $\Sigma(\Phi)$ and affine faces constitute $\aSigma(\Phi)$.

The contents of the paper are as follows.
Celestial faces and other geometric aspects are discussed in Section~\ref{sec:hyparr}.
Section \ref{sec:products} provides background on both finite and affine Coxeter complexes
and arrives at the module structure on the set of faces of the Steinberg torus. Section \ref{sec:modules} relates the geometric actions to the module structures over the descent algebra.
Section~\ref{sec:models} reviews the known combinatorial models for the Coxeter complex when $\Phi$ is of type $A$ or $C$, and introduces analogous models for the Steinberg torus of such root systems. 
They involve certain cyclic partitions that we call \emph{spin necklaces} in type $A$, and 
similar structures in type $C$. 
The partial order among faces as well as the product of faces are described in these terms.

\section{Hyperplane arrangements. Faces: mundane and celestial}\label{sec:hyparr}
 
We give some general background on hyperplane arrangements, following \cite[Appendix A]{Bro:2000}. We center around the notion of face of an arrangement and the Tits product of faces.
We discuss the notion of celestial face and focus on the construction
of a right module structure on the set of mundane faces over the semigroup of celestial faces.

The notion of celestial face may not have appeared explicitly in the literature before, but closely related ideas appear in~\cite[Section 11.8]{AbrBro:2008}, in the context of buildings.

\subsection{Hyperplane arrangements and the Tits product}\label{ss:hyparr}

Let $V$ be a finite-dimensional real vector space. A \emph{hyperplane} $H$ is an affine subspace of $V$ of codimension 1. A \emph{hyperplane arrangement} $\Hy$ in $V$ is a collection of hyperplanes in $V$ (not necessarily finite). 

For each $H\in \Hy$ we choose a nonzero affine  functional $f_H$ so that $H= \{ \lambda \in V \mid f_H(\lambda) = 0\}$. 
The positive and negative \emph{halfspaces} of $H$ are defined by 
\[
H^+:= \{ \lambda \in V \mid f_H(\lambda) > 0\} \qand H^-:= \{ \lambda \in V \mid f_H(\lambda) < 0\}.
\]
Also, let $H^0 := H$.

The hyperplane arrangement $\Hy$ partitions $V$ into a collection of nonempty disjoint convex polyhedral sets called \emph{faces}, given by intersections of hyperplanes and their halfspaces. 
A face $F$ is uniquely determined by its \emph{sign vector}: 
\[ 
\sigma(F) = \bigl(\sigma_H(F)\bigr)_{H \in \Hy},
\] 
where $\sigma_H(F)\in\{+, -,0\}$ is the sign of $f_H$ on $F$ (the sign of $f_H(\lambda)$
on any point $\lambda \in F$).
Indeed, 
\[ 
F = \bigcap_{H \in \Hy} H^{\sigma_H(F)}.
\] 
A sign sequence $(\sigma_H)_{H \in \Hy}$ is the sign vector of a face if and only if $\bigcap_{H \in \Hy} H^{\sigma_H}\neq\emptyset$.

We denote by $\Sigma(\Hy)$ the collection of faces of $\Hy$. There is a partial order on faces, given by $F\leq G \Leftrightarrow \overline{F} \subseteq \overline{G}$; that is, if the closure of $F$ is contained in the closure of $G$. In terms of sign vectors, this can be stated as: $F \leq G$ if and only if for each $H \in \Hy$ either $\sigma_H(F) = 0$ or $\sigma_H(F) = \sigma_H(G)$. Maximal faces $C$ are called \emph{chambers}, and they are characterized by the fact that $\sigma_H(C)\neq 0$ for all $H \in \Hy$.
Let $\Cc(\Hy)$ denote the set of chambers.

Let $F$ and $G$ be faces of a hyperplane arrangement $\Hy$, and choose any two points $\lambda \in F$ and $\lambda'\in G$. 
We say the ordered pair $(F,G)$ is \emph{multiplicable}, if there is $\epsilon > 0$ such that the line segment $\{(1-t)\lambda+t\lambda' \mid 0<t<\epsilon\}$, crosses only finitely many hyperplanes in $\Hy$. 
If $(F,G)$ is a multiplicable pair, their \emph{Tits product} $FG$ is the first face of $\Hy$ entered upon traveling a small positive distance along the line $(1-t)\lambda + t\lambda'$. (That is, there is some $\epsilon>0$ such that for all $0<t < \epsilon$, we have $(1-t)\lambda+t \lambda' \in FG$.)
Both notions are independent of the choice of points $\lambda$ and $\lambda'$.
An example is shown in Figure~\ref{fig:Titsprod}.

\begin{figure}[!h]
\[
\begin{tikzpicture}[scale=0.8,>=stealth]
\draw[-, very thick] (-1.5,2.6) -- (1.875,-3.25);
\draw[-, very thick] (-3,0) -- (0,0);
\draw[-, very thick] (0,0)--(3,0) node[fill=white,midway] {$F$};
\draw[-, very thick] (3,0)--(6,0);
\draw[-, very thick] (-1.5,-2.6) -- (0,0) node[fill=white,midway] {$G$};
\draw[-, very thick] (0,0) --(1.5,2.6);
\draw[-, very thick] (1.125,-3.25) -- (4.5,2.6);
\draw (1.5,-1.25) node {$FG$};
\draw (0,0) node {$\bullet$};
\draw (3,0) node {$\bullet$};
\draw (1.5,-2.6) node {$\bullet$};
\blue{
\draw (2.5,0) node (a) {$\bullet$};
\draw (-1.15,-2) node (b) {$\bullet$};
\draw[dashed] (b)--(a);
\draw[->,line width=1] (2.5,0)--(1.77,-.4); 
}
\end{tikzpicture}
\]
\caption{The product of faces in a line arrangement.}
\label{fig:Titsprod}
\end{figure}
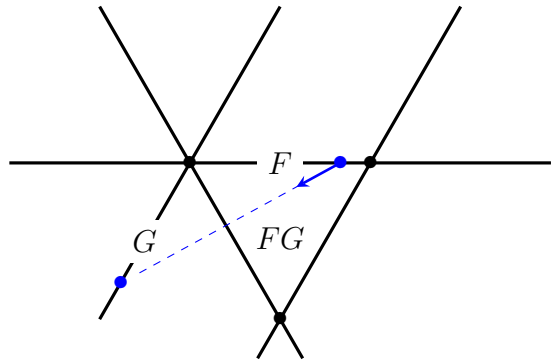

The product of two multiplicable faces admits a simple description in terms of sign vectors. Indeed, for small positive $\epsilon$, 
\[
f_H\bigl((1-\epsilon)\lambda + \epsilon\lambda'\bigr) = 
(1-\epsilon)f_H(\lambda) + \epsilon f_H(\lambda')
\]
has the same sign as $f_H(\lambda)$ unless $f_H(\lambda) = 0$, 
in which case it adopts the sign of $f_H(\lambda')$. In other words, we have 
\begin{equation}\label{eq:product}
\sigma_H(FG) = \begin{cases} \sigma_H(F) & \text{if } \sigma_H(F) \neq 0,\\
                             \sigma_H(G) & \text{if } \sigma_H(F) = 0.
               \end{cases}
\end{equation}

It follows that when defined, the Tits product is associative. Moreover,
if $C\in\Cc(\Hy)$ is a chamber
and $F\in\Sigma(\Hy)$ is any face, then $FC$ is also a chamber, called the \emph{Tits projection} of $F$ onto $C$. Also, $CF=C$. 

We say that $\Hy$ is \emph{locally finite} if every $\lambda\in V$ has a neighborhood that
intersects only finitely many hyperplanes in $\Hy$. In this case,
all pairs of faces of $\Hy$ are multiplicable, and the Tits product gives $\Sigma(\Hy)$ the structure of a semigroup. Moreover, the set $\Cc(\Hy)$ is a two-sided ideal of $\Sigma(\Hy)$, with the right action being trivial. In particular, all this holds if $\Hy$ is finite.

If the intersection of all hyperplanes in $\Hy$ is nonempty, this
intersection is a face which is the unit for the Tits product, and $\Sigma(\Hy)$
 is in this case a monoid. This holds in particular if all hyperplanes in $\Hy$ are linear.

For more details, see \cite[Sections~1.4 and 10.1]{AbrBro:2008}, \cite[Appendix A]{Bro:2000}, or \cite[Section 2]{BroDia:1998}.

\subsection{The celestial sphere}\label{ss:celestial}

A \emph{ray} $r$ in $V$ is a subset of the form $r=f(\R_+)$, where $f:\R\to V$ is an
affine transformation. The \emph{base} of the ray is $r_0:=f(0)$. Two rays $r$ and $s$
are \emph{parallel} if there exists $\lambda\in V$ such that $r=\lambda+s$ (equality of subsets). A \emph{celestial point} is a parallelism class of rays. The \emph{celestial sphere} in $V$ is the set $S_\infty(V)$ of celestial points in $V$. Let $[r]\in S_\infty(V)$ denote the
parallelism class of a ray $r$ in $V$. See Figure~\ref{fig:cpoint}.

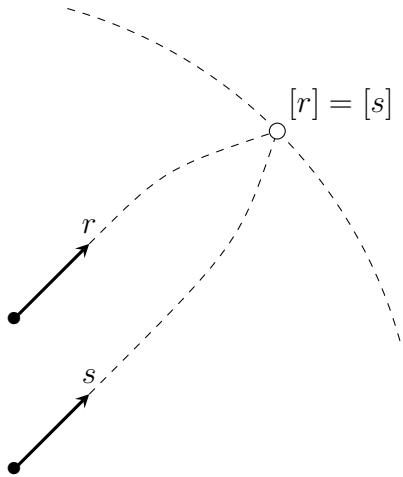
\begin{figure}[!h]
\begin{tikzpicture}[>=stealth]
\draw[very thick,->] (0,0) node {$\bullet$} -- (1,1);
\draw[very thick,->] (0,2) node {$\bullet$} -- (1,3);
\draw[dashed] (1,1) node[above] {$s$} .. controls (3,3) .. (3.5,4.5);
\draw[dashed] (1,3) node[above] {$r$} .. controls (2,4) .. (3.5,4.5);
\draw[dashed] (3.5,4.5) arc (45:75:6.364);
\draw[dashed] (3.5,4.5) arc (45:15:6.364);
\draw[fill=white] (3.5,4.5) circle (3pt) node[above right] {$[r]=[s]$};
\end{tikzpicture}
\caption{A celestial point.}
\label{fig:cpoint}
\end{figure}

Let $H$ be a hyperplane in $V$. For each $\sigma\in\{+,-,0\}$, let
\[
H_\infty^\sigma :=\{[r]\in S_\infty(V) \mid \text{ $r$ is a ray with $r_0\in H$ and $r\subseteq H^\sigma$}\}.
\]
In other words, $H_\infty^\sigma$ consists of those celestial points that can be represented by a ray based on $H$ and contained in the halfspace $H^\sigma$ (hyperplane $H$, if $\sigma=0$). We may refer to $H_\infty^+$ and $H_\infty^-$ as the positive and negative \emph{halfspheres} determined by $H$.

Let $\Hy$ be a hyperplane arrangement in $V$. A \emph{celestial face} is a nonempty subset of $S_\infty(V)$ of the form
\[
F = \bigcap_{H\in \Hy} H_\infty^{\sigma_H},
\]
where $(\sigma_H)_{H\in\Hy}$ is a sign sequence. Let $\Sigma_\infty(\Hy)$
denote the collection of celestial faces. It partitions the celestial sphere.
When warranted, we refer to the faces in $\Sigma(\Hy)$ as \emph{mundane}.

The product of two celestial faces $F$ and $G$ is defined as follows. Choose rays $r$ and $s$ with a common base such that $[r]\in F$ and $[s]\in G$. Assume first that $r$ and $s$ are neither equal nor opposite.
Let $f:\C\to V$ be an affine map that
sends the positive real axis to $r$ and the positive imaginary axis to $s$.
The product $FG$ is the celestial face that contains the celestial points 
$[f(e^{it})]$ for all $0<t<\epsilon$, for some $\epsilon>0$. (More plainly, we rotate
$r$ towards $s$ a small positive angle. Of the two directions of rotation, we choose the one that stays in the convex sector determined by $r$ and $s$.) If $r$ an $s$ are equal or opposite, we stay put and $FG=F$.
For this to be well-defined,
we require that once all hyperplanes in $\Hy$ are translated to the common base of $r$ and $s$, only finitely many of them intersect the sector $\{f(re^{it}) \mid 0<t<\epsilon\}$.

Equivalently, $FG$ is the first celestial face entered by walking a small positive distance
from a celestial point in $F$ to one in $G$, along a maximal celestial circle. See Figure~\ref{fig:cprod}.

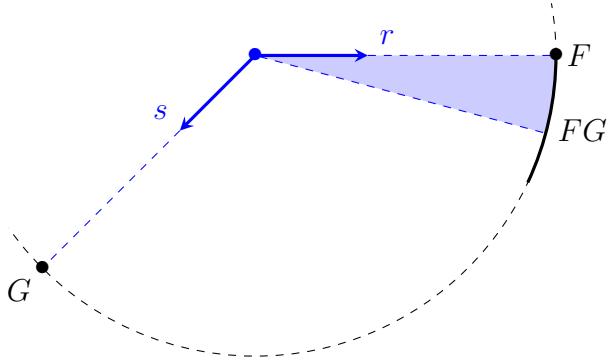
\begin{figure}[!h]
\begin{tikzpicture}[>=stealth]
\draw[dashed,draw=blue, fill=blue!20!white] (1.5,0) -- (4,0) arc (0:-15:4) -- (0,0);
\draw[very thick, blue, ->] (0,0) node {$\bullet$} -- (1.5,0) node[above right] {$r$};
\draw[very thick, blue, ->] (0,0) -- (-1,-1) node[above left] {$s$};
\draw[dashed,blue] (-1,-1) -- (-2.828,-2.828);
\draw[dashed] (4,0) arc (0:-145:4);
\draw[dashed] (4,0) arc (0:10:4);
\draw[very thick] (4,0) node {$\bullet$} arc (0:-25:4);
\draw (4,0) node[right] {$F$};
\draw (4.35,-1) node {$FG$};
\draw (-2.828,-2.828) node {$\bullet$};
\draw (-2.828,-2.828) node[below left] {$G$};
\end{tikzpicture}
\caption{The product of two celestial faces.}
\label{fig:cprod}
\end{figure}

It is also possible to multiply a mundane face with a celestial face, in that order.
Let $F\in \Sigma(\Hy)$ and $G\in \Sigma_\infty(\Hy)$. The product $FG\in \Sigma(\Hy)$
is the mundane face entered upon traveling a small positive distance starting from a point in $F$ and heading in the direction of a celestial point contained in $G$. A local condition similar to the preceding ones guarantees the existence of this product. See Figure~\ref{fig:fcprod}.

\begin{figure}[!h]
\begin{tikzpicture}[>=stealth]
\draw[very thick] (0,0) -- (3,3);
\draw[very thick] (-1.5,1) -- (1.5,4);
\draw[very thick] (-1,2) -- node[pos=.4,fill=white] {$F$} (3,2);
\draw[dashed] (3,3) .. controls (4,4) .. (4.5,5.5);
\draw[dashed] (1.5,4) .. controls (2.5,5) .. (4.5,5.5);
\draw[dashed] (4.5,5.5) arc (45:75:6.364);
\draw[dashed] (4.5,5.5) arc (45:15:6.364);
\draw[fill=black] (4.5,5.5) circle (3pt) node[above right] {$G$};
\draw (1.25,2.75) node {$FG$};
\draw[very thick, blue,->] (1.5,2) node {$\bullet$} -- (2.25,2.75);
\draw[blue,dashed] (2.25,2.75) .. controls (3.75,4.25) .. (4.5,5.5);
\end{tikzpicture}
\caption{The product of a mundane face and a celestial face.}
\label{fig:fcprod}
\end{figure}
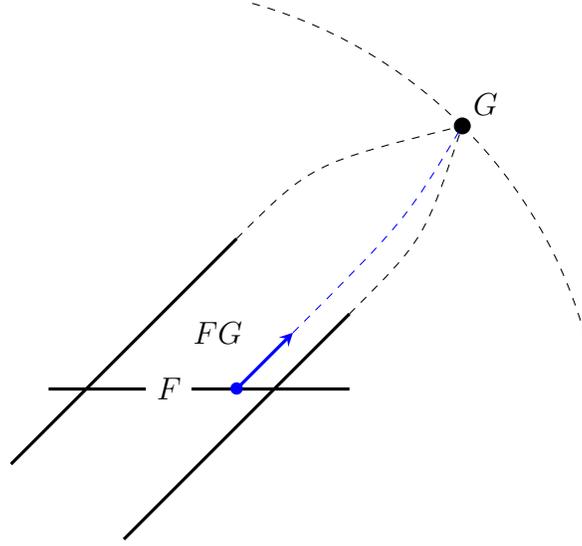

\begin{proposition}\label{p:celestial}
Assume that $\Hy$ is locally finite and has finitely many parallelism classes.
Then the preceding products are globally defined, 
the set $\Sigma_\infty(\Hy)$ is a semigroup, and the set $\Sigma(\Hy)$ is a right module over it.
\end{proposition}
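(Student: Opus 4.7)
The plan is to reduce all three claims to the familiar Tits product theory for finite central arrangements by exploiting the two finiteness hypotheses to produce sign-vector formulas analogous to~\eqref{eq:product}. First observe that the sets $H_\infty^+, H_\infty^-, H_\infty^0$ depend only on the linear direction of $H$: if $\ell_H$ denotes the linear part of $f_H$, and $r = \{r_0 + tv : t \geq 0\}$ with $r_0 \in H$, then $[r] \in H_\infty^\sigma$ precisely when $\mathrm{sign}\,\ell_H(v) = \sigma$. Hence the celestial face partition $\Sigma_\infty(\Hy)$ is the face partition of the unit sphere $S(V)$ induced by the \emph{finite} central arrangement $\Hy_0 = \{\ker \ell_H : H \in \Hy\}$ (finite by the parallelism-class hypothesis), and each celestial face $G$ acquires a well-defined sign vector $(\sigma_H(G))_{H \in \Hy}$ with $\sigma_H(G) = \mathrm{sign}\,\ell_H(v)$ for any direction $v$ of a representative ray.

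Next, I would address global existence of the products. For the celestial-celestial product $FG$, the rotation from $r$ to $s$ inside the plane they span traces a small arc on $S(V)$ that meets only finitely many of the finitely many hyperplanes in $\Hy_0$; the definition thus reduces to a standard Tits product inside a finite central arrangement. For the mundane-celestial product $FG$ with $F \in \Sigma(\Hy)$ and $G \in \Sigma_\infty(\Hy)$, choose $\lambda \in F$ and a ray $r = \{\lambda + tv : t \geq 0\}$ representing a celestial point in $G$; local finiteness supplies an open ball about $\lambda$ meeting only finitely many hyperplanes, so travelling a short distance along $r$ enters a well-defined mundane face. In both cases, expanding $f_H(\lambda + tv) = f_H(\lambda) + t\,\ell_H(v)$ and letting $t \to 0^+$ yields the unified sign-vector description
\[
\sigma_H(FG) \;=\; \begin{cases} \sigma_H(F), & \text{if } \sigma_H(F) \neq 0,\\ \sigma_H(G), & \text{if } \sigma_H(F) = 0,\end{cases}
\]
from which independence of the chosen representative rays and basepoints is immediate.

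From this description, the semigroup and right-module structures become formal: the coordinatewise rule ``keep the first nonzero sign'' is associative, so both $(FG)H$ and $F(GH)$ have sign vectors whose $H$-coordinate equals the first nonzero entry of the sequence $\sigma_H(F), \sigma_H(G), \sigma_H(H)$. This delivers associativity of the celestial product and the right-module axiom in one stroke. The main obstacle, in my view, is not any of these algebraic steps but the careful geometric verification of global existence and representative-independence when $\Hy$ is infinite; once local finiteness at the basepoint has been used to reduce each product to a finite subcollection of $\Hy$, the classical sign-vector arguments underlying~\eqref{eq:product} take over essentially verbatim.
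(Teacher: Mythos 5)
Your argument is correct, but it takes a more computational route than the paper. You identify celestial faces with the faces of the finite central arrangement of linear parts and then extend the sign-vector calculus of~\eqref{eq:product} to all three products, getting associativity and the module axiom from the coordinatewise ``first nonzero sign'' rule. The paper instead passes to the cone $\cHy$ in $\R\oplus V$: celestial faces become the faces of the induced finite linear arrangement $\cHy_0$ on the hyperplane $H_0$ (the same arrangement as your $\Hy_0$, realized as a slice of the cone), mundane faces become the faces of $\cHy$ in the open positive halfspace of $H_0$, and all the products are exhibited as instances of the Tits product in $\Sigma(\cHy)$, whose associativity is then quoted; one wrinkle there is that $\cHy$ need not be locally finite, so the paper has to argue separately that the relevant products in the cone exist. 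Your direct approach sidesteps the cone and that wrinkle, at the cost of redoing the associativity check by hand; the paper's approach buys a uniform conceptual packaging (everything is a Tits product in one linear arrangement, the Tits cone in the Coxeter setting). One small point you should make explicit: when you compute $\sigma_H(FG)$ for $F\in\Sigma(\Hy)$ and $G\in\Sigma_\infty(\Hy)$, you need a single $\epsilon>0$ valid for \emph{all} $H\in\Hy$ simultaneously, so that the points $\lambda+tv$, $0<t<\epsilon$, genuinely realize the asserted sign vector (and hence a nonempty face). This follows from local finiteness: hyperplanes missing a small ball around $\lambda$ have constant nonzero sign there, equal to $\sigma_H(F)$, and only finitely many hyperplanes meet the ball, so a common $\epsilon$ exists. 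With that observation spelled out, your proof is complete.
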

\begin{proof}
We resort to the \emph{cone} on $\Hy$. This is the linear arrangement $\cHy$ in 
$\widehat{V}:=\R\oplus V$
consisting of the linear hyperplane $H_0:= \{(0,v) \mid v\in V\}$ together with 
the linear hyperplanes $\widehat{H}:=\ker(\widehat{f}_H)$, for $H\in\Hy$, where 
$\widehat{f}_H$
is the unique linear functional on $\widehat{V}$ such that $\widehat{f}_H(1,v)=f_H(v)$.

The \emph{induced} linear arrangement
$\cHy_0:=\{H_0\cap\widehat{H} \mid H\in\Hy\}$ (in the space $V$) is finite, since two parallel
hyperplanes in $\Hy$ have the same intersection with $H_0$. Hence $\Sigma(\cHy_0)$ is a monoid. Now, save for its central face, the set $\Sigma(\cHy_0)$ is in bijection
with $\Sigma_\infty(\Hy)$, in such a way that the Tits product corresponds to the product of celestial faces. Thus, $\Sigma_\infty(\Hy)$ is a semigroup.

Note that $\Sigma(\cHy_0)$ consists of the faces of $\cHy$ contained in $H_0$.
On the other hand, let $\Sigma_+(\cHy)$ denote the subset of $\Sigma(\cHy)$ consisting
of those faces contained in the positive halfspace of $H_0$. It  is in bijection with 
$\Sigma(\Hy)$. The arrangement $\cHy$ need not be locally finite, but the product of a
face in $\Sigma_+(\cHy)$ with a face in $\Sigma(\cHy_0)$ (in that order) is well-defined,
since $\Hy$ is locally finite.
Moreover, the two bijections in
question intertwine the right action of $\Sigma(\cHy_0)$ on $\Sigma_+(\cHy)$
 with the right action of $\Sigma_\infty(\Hy)$ on $\Sigma(\Hy)$. 
Since the former is associative (as an instance of the Tits product in $\Sigma(\cHy)$),
$\Sigma(\Hy)$ is a right module over $\Sigma_\infty(\Hy)$.
\end{proof}

Figure~\ref{fig:cone} illustrates the discussion in Proposition~\ref{p:celestial}
in the case $\Hy$ is an arrangement of points in a one-dimensional space.

\begin{figure}[!h]
\begin{tikzpicture}[scale=2.2]
\draw[dashed] (-2.5,1)--(2.5,1);
\draw (-2,1) node {$\bullet$};
\draw (-1,1) node {$\bullet$};
\draw (0,1) node {$\bullet$};
\draw (1,1) node {$\bullet$};
\draw (2,1) node {$\bullet$};
\draw (0,0) node {$\bullet$};
\draw (0,-.2)--(0,1.5);
\draw (-2.5,0)--(2.5,0) node[right] {$H_0$};
\draw (-2.5,1.25)--(.4,-.2);
\draw (-1.5,1.5)--(.2,-.2);
\draw (2.5,1.25)--(-.4,-.2);
\draw (1.5, 1.5)--(-.2,-.2);
\draw (-2.75,1) node[left] {$\Sigma_+(\cHy) \cong \Sigma(\Hy)$:};
\draw (-2.75,0) node[left] {$\Sigma(\cHy_0)\setminus\{0\} \cong \Sigma_{\infty}(\Hy)$:};
\end{tikzpicture}
\caption{The cone of a rank $1$ arrangement.}
\label{fig:cone}
\end{figure}
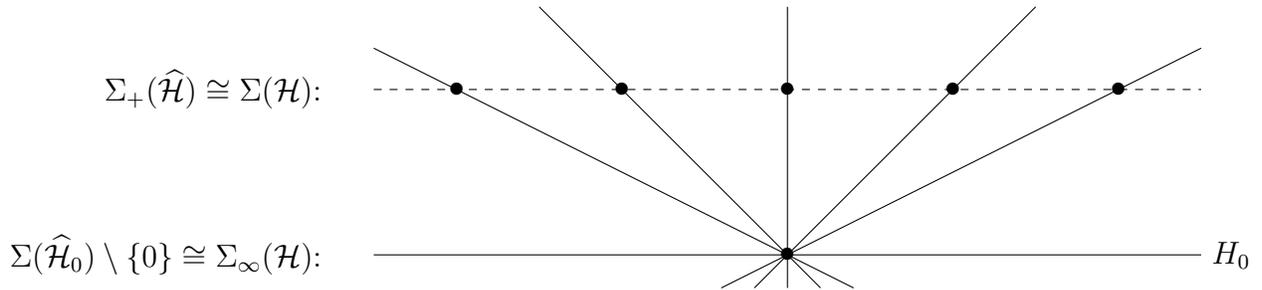

The product $GF$ of a celestial face $G\in \Sigma_\infty(\Hy)$ and a
mundane face $F\in \Sigma(\Hy)$, in that order, generally is not defined.
Note that the corresponding pair of faces in $\Sigma(\cHy)$ are not multiplicable,
since any neighborhood of $G$ intersects infinitely many hyperplanes in $\cHy$.
See Figure~\ref{fig:cone}.

\section{Root systems, Coxeter complexes and Steinberg tori}\label{sec:products}

We turn to hyperplane arrangements associated to root systems.
Our ultimate goal in this section is to arrive at a right module structure on the set of faces of the Steinberg torus (of an irreducible crystallographic root system) over the monoid of faces of the finite Coxeter arrangement.
These notions are reviewed along the way; additional background may be found in \cite{AbrBro:2008,DPS:2009, Hum:1990}.

Throughout, $V$ denotes a Euclidean vector space, with inner product $\br{ \cdot\,{,}\,\cdot }$.

\subsection{The finite Coxeter arrangement}\label{ss:finarr}

 Let $\Phi$ be a root system in $V$, in the sense of~\cite[Section 1.2]{Hum:1990}
(a generalized root system in the sense of~\cite[Definition 1.5]{AbrBro:2008}).
Let $\Delta$ be a set of simple roots.
Every root $\beta\in\Phi$ 
belongs either to the nonnegative span of $\Delta$ and is designated
\emph{positive}, or to the nonpositive span of $\Delta$ and is designated \emph{negative}. We write $\beta>0$ or $\beta<0$ accordingly. Let $\Pi = \{ \beta \in \Phi \mid \beta > 0\}$ denote the set of positive roots.

For any root $\beta \in \Phi$, let 
\[
H_\beta := \{\lambda \in V \mid \br{\lambda, \beta} = 0\}
\] 
be the hyperplane orthogonal to $\beta$. The set of hyperplanes 
\[
\Hy(\Phi) :=\{ H_\beta \mid \beta\in\Phi\}
\] 
is the \emph{Coxeter arrangement} associated to $\Phi$. Since $H_{\beta} = H_{-\beta}$, we have
$\Hy(\Phi) :=\{ H_\beta \mid \beta\in\Pi\}$. 

Let $\Sigma:=\Sigma\bigl(\Hy(\Phi)\bigr)$ be the set of faces of the arrangement $\Hy(\Phi)$,
and $\Cc:=\Cc\bigl(\Hy(\Phi)\bigr)$ the subset of chambers. 
Since $\Hy(\Phi)$ is finite and linear, $\Sigma$ is a monoid under the Tits product.
The set $\Cc$ is a two-sided ideal, with trivial right action.

The \emph{dominant} or \emph{fundamental} chamber is
\[
C_{\emptyset}:=\{\lambda\in V \mid \br{ \lambda,\alpha } >0
  \text{ for all }\alpha\in\Delta\}.
\]
The faces of $C_{\emptyset}$
are the sets of the form
\begin{equation}\label{e:CJ}
C_J:=\{\lambda\in V \mid \br{ \lambda, \alpha } = 0 \mbox{ for } \alpha\in J,
  \\ \br{ \lambda, \alpha} > 0 \mbox{ for } \alpha\in \Delta\setminus J\},
\end{equation}
where $J$ is a subset of $\Delta$.

Figure~\ref{fig:A2-finite} illustrates the situation for the root system $\Phi=A_2$
and Figure~\ref{fig:C2-finite} for $\Phi=C_2$. 

\begin{figure}[!h]
\begin{tabular}{c c c}
\begin{tikzpicture}[cm={1,0,.5,.8660254,(0,0)}, >=stealth,baseline=0]
\draw (2,3) node {$\bullet$};
\draw[very thick,->] (2,3)--(1,5) node[above,yshift=.2cm] {$\alpha_2$};
\draw[very thick,->] (2,3)--(3,4) node[above right] {$\hr$};
\draw[very thick,->] (2,3)--(4,2) node[below right] {$\alpha_1$};
\end{tikzpicture}
&
\begin{tikzpicture}[cm={1,0,.5,.8660254,(0,0)}, >=stealth,baseline=0]
\draw (2,3) node {$\bullet$};
\draw[dashed,thin,->] (2,3)--(1,5);
\draw[dashed,thin,->] (2,3)--(3,4);
\draw[dashed,thin,->] (2,3)--(4,2);
\draw[very thick] (0,3)--(4,3) node[right] {$\scriptstyle H_{\alpha_2}$};
\draw[very thick] (2,1)--(2,5) node[above right] {$\scriptstyle H_{\alpha_1}$};
\draw[very thick] (0,5)--(4,1) node[below right] {$\scriptstyle H_{\hr}$};
\draw (3.5,4.5) node {$\scriptstyle C_{\emptyset}$};
\end{tikzpicture}
&
\begin{tikzpicture}[cm={1,0,.5,.8660254,(0,0)},scale=3,baseline=-1cm]
\draw[fill=blue!20!white,draw=none] (0,0) node {$\bullet$} node[below] {$C_{\{\alpha_1,\alpha_2\}}$} -- node[midway,below] {$C_{\{\alpha_2\}}$} (1,0) ..controls (.8,.6) and (.3,.5)..  (0,1) -- node[midway,left] {$C_{\{\alpha_1\}}$} (0,0);
\draw[very thick] (0,0)--(0,1.1);
\draw[very thick] (0,0)--(1.1,0);
\draw (.33,.33) node {$C_{\emptyset}$};
\end{tikzpicture}
\\
(a) & (b) & (c)
\end{tabular}
\caption{ The root system $A_2$. (a) $\Delta=\{\alpha_1,\alpha_2\}$ and $\Pi=\Delta\cup\{\hr\}$.\\ 
(b) The arrangement $\Hy(A_2)$.
(c) The faces of the fundamental chamber.}
\label{fig:A2-finite}
\end{figure}

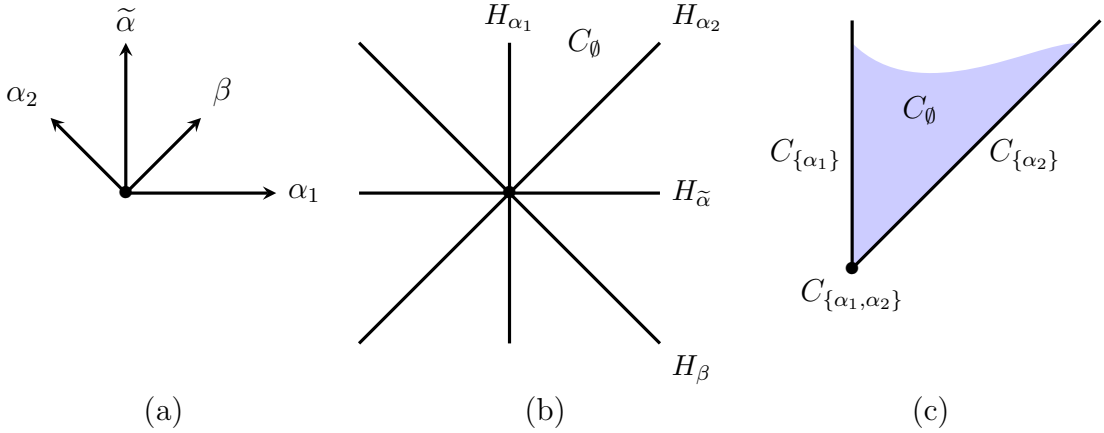
\begin{figure}[!h]
\begin{tabular}{c c c}
\begin{tikzpicture}[>=stealth,baseline=0]
\draw (0,0) node {$\bullet$};
\draw[very thick,->] (0,0)--(2,0) node[right] {$\alpha_1$};
\draw[very thick,->] (0,0)--(0,2) node[above] {$\hr$};
\draw[very thick,->] (0,0)--(-1,1) node[above left] {$\alpha_2$};
\draw[very thick,->] (0,0)--(1,1) node[above right] {$\beta$};
\end{tikzpicture}
&
\begin{tikzpicture}[>=stealth,baseline=0]
\draw[very thick] (-2,2)--(2,-2) node[below right] {\small $H_{\beta}$};
\draw[very thick] (-2,-2)--(2,2) node[above right] {\small $H_{\alpha_2}$};
\draw[very thick] (0,-2)--(0,2) node[above] {\small $H_{\alpha_1}$};
\draw[very thick] (-2,0)--(2,0) node[right] {\small $H_{\hr}$};
\draw (0,0) node {$\bullet$};
\draw (1,2) node {$C_{\emptyset}$};
\end{tikzpicture}
&
\begin{tikzpicture}[scale=3,baseline=1cm]
\draw[fill=blue!20!white,draw=none] (0,0) node {$\bullet$} node[below] {$C_{\{\alpha_1,\alpha_2\}}$} -- node[midway,right, xshift=5pt] {$C_{\{\alpha_2\}}$} (1,1) 
..controls (.8,1) and (.3,.7)..  (0,1) -- node[midway,left] {$C_{\{\alpha_1\}}$} (0,0);
\draw[very thick] (0,0)--(0,1.1);
\draw[very thick] (0,0)--(1.1,1.1);
\draw (.3,.7) node {$C_{\emptyset}$};
\end{tikzpicture}
\\
(a) & (b) & (c)
\end{tabular}
\caption{ The root system $C_2$. (a) $\Delta=\{\alpha_1,\alpha_2\}$ and $\Pi=\Delta\cup\{\beta,\hr\}$.\\ (b) The arrangement $\Hy(C_2)$.
(c) The faces of the fundamental chamber.}
\label{fig:C2-finite}
\end{figure}

For each $\beta \in \Phi$, let $s_\beta$ denote the orthogonal reflection through $H_\beta$.  
Let $W$ be the subgroup of $\GL(V)$ generated by these reflections. This is the \emph{Coxeter group} of $\Phi$. It permutes the
hyperplanes in $\Hy(\Phi)$ and therefore acts on the set $\Sigma$. We let $w\cdot F$ denote the action of $w\in W$ on $F\in\Sigma$.

The closure of the fundamental chamber is a strict fundamental domain for
the action of $W$ on $V$~\cite[Theorem 1.104]{AbrBro:2008}. Therefore, every 
face $F$ in $\Sigma$ is in the orbit of a unique face $C_J$ of $C_\emptyset$. We let 
\[
\col(F):= \Delta\setminus J
\]
denote the complement of the unique subset of $\Delta$ such that
\begin{equation}\label{e:col}
 F = w\cdot C_J
\end{equation}
 for some $w\in W$. We call $\col(F)$ the \emph{color set} of $F$.
 Proposition~\ref{prp:Dcol} below provides information on the uniqueness of $w$.

The action of $W$ on $\Cc$ is simply-transitive~\cite[Theorem 1.69]{AbrBro:2008}. Therefore, given a face $F$ there is a unique $w\in W$ such that 
\begin{equation}\label{e:wF}
w\cdot C_{\emptyset}=FC_{\emptyset}.
\end{equation} 
We let
\[
w_F:=w
\]
denote this element of $W$. In other words, acting with the group element $w_F$ on the fundamental chamber has the same effect as projecting the face $F$ onto that chamber.

A face $F\in\Sigma$ has sign vector 
$ 
\sigma(F) = \bigl(\sigma_{\beta}(F)\bigr)_{\beta \in \Pi},
$ 
where $\sigma_{\beta}(F)\in\{+, -, 0\}$
is the sign of $\br{ \lambda, \beta }$, $\lambda$ being any point in $F$.  

The face $C_\Delta$ has sign vector $(0,0,\ldots,0)$. It is the intersection of all the hyperplanes, and hence the unit for the Tits product.
The fundamental chamber $C_{\emptyset}$ has sign vector $(+,+,\ldots,+)$.
More generally,
for a positive root $\beta$ and $\lambda \in C_J$, we have $\br{ w(\lambda), \beta} = \br{ \lambda, w^{-1}(\beta)}$, and hence
\begin{equation}\label{e:signJ} 
\sigma_{\beta}(w\cdot C_J) = \begin{cases} 
0 & \mbox{if } w^{-1}\beta \in \spn\{ \alpha \mid \alpha \in J\}, \\ 
+ & \mbox{if } w^{-1}\beta \in \Pi \setminus \spn\{\alpha \mid \alpha \in J\}, \\
- & \mbox{if } -w^{-1}\beta \in \Pi \setminus \spn\{\alpha \mid \alpha \in J\}. 
\end{cases}
\end{equation}

Let \[ \Des(w) = \{ \alpha \in \Delta \mid w(\alpha) < 0 \}.\] This is the \emph{descent set} of $w$, which will be discussed at length in Section \ref{sec:modules}.

\begin{prp} \label{prp:Dcol}
Let $F\in\Sigma$ be a face.
The element $w_F$ is the unique $w\in W$ such that
\begin{equation}\label{e:Dcol}
F = w\cdot C_J \qand \Des(w) \subseteq \col(F).
\end{equation}
where $J=\Delta\setminus \col(F)$.
\end{prp}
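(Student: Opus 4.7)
The plan is to establish that $w_F$ satisfies both conditions in~\eqref{e:Dcol} and then invoke standard parabolic coset theory for uniqueness.

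I first verify that $F = w_F\cdot C_J$. By the $W$-equivariance of the partial order on $\Sigma$ together with the relation $F\le FC_\emptyset = w_F\cdot C_\emptyset$, the face $w_F^{-1}\cdot F$ lies in $\overline{C_\emptyset}$. By the definition of $\col(F)$, the face $F$ is $W$-conjugate to $C_J$, hence so is $w_F^{-1}\cdot F$. Since $\overline{C_\emptyset}$ is a strict fundamental domain for the $W$-action on $V$, only one face of $C_\emptyset$ lies in the $W$-orbit of $C_J$, namely $C_J$ itself; this forces $w_F^{-1}\cdot F = C_J$.

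Next I argue $\Des(w_F)\subseteq \col(F)$, i.e. $w_F(\alpha)>0$ for every $\alpha\in J$. Fix $\alpha\in J$ and let $\beta\in\Pi$ be the positive root among $\pm w_F(\alpha)$. Then $w_F^{-1}(\beta)=\pm\alpha\in\spn(J)$, so formula~\eqref{e:signJ} applied to $F = w_F\cdot C_J$ yields $\sigma_\beta(F) = 0$. The Tits product formula~\eqref{eq:product} therefore gives $\sigma_\beta(FC_\emptyset) = \sigma_\beta(C_\emptyset) = +$. On the other hand, $\sigma_\beta(w_F\cdot C_\emptyset)$ equals the sign of $\br{\mu, w_F^{-1}(\beta)}$ for $\mu\in C_\emptyset$, and this is $+$ precisely when $w_F^{-1}(\beta)=+\alpha$. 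Comparing against $w_F\cdot C_\emptyset = FC_\emptyset$ forces $w_F(\alpha)=\beta>0$.

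For uniqueness, I invoke the standard fact that the $W$-stabilizer of $C_J$ is the parabolic subgroup $W_J = \br{s_\alpha\mid\alpha\in J}$, so $\{w\in W\mid F = w\cdot C_J\}$ is the left coset $w_F W_J$. Within any such coset, the condition $\Des(w)\subseteq \Delta\setminus J$—equivalently $w(\alpha)>0$ for all $\alpha\in J$—singles out the unique minimum-length representative, proving uniqueness. The main obstacle is the sign-vector computation in the second paragraph; the remaining steps are bookkeeping built on the fundamental-domain property and classical parabolic coset theory already underlying the background cited in the paper.
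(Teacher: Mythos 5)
Your proof is correct. The verification that $w_F$ itself satisfies~\eqref{e:Dcol} follows the same route as the paper: the strict-fundamental-domain argument pinning down $w_F^{-1}\cdot F=C_J$ is the paper's ``uniqueness in~\eqref{e:col}'' step in mild disguise, and your sign-vector computation with~\eqref{e:signJ} and~\eqref{eq:product} is exactly the paper's argument, run directly rather than by contradiction. Where you genuinely diverge is uniqueness: the paper stays inside the hyperplane-arrangement formalism, citing a result of Brown on Tits projections to deduce $FC_\emptyset=w\cdot C_\emptyset$ from $\Des(w)\subseteq\col(F)$ and then concluding $w=w_F$ from~\eqref{e:wF}; you instead identify $\{w\mid F=w\cdot C_J\}$ with the coset $w_FW_J$ (stabilizer of $C_J$ equals $W_J$) and invoke the classical fact that the condition $w(\alpha)>0$ for all $\alpha\in J$ picks out the unique minimal-length coset representative. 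Your route imports standard parabolic coset theory (and the stabilizer fact, which is the content of the bijection $w\cdot C_J\leftrightarrow wW_J$ recorded in the paper's Section on Coxeter complexes, via~\cite[Theorem 1.111]{AbrBro:2008}), so it is purely algebraic and self-contained on the Coxeter-group side; the paper's route keeps the argument geometric and reuses the projection machinery that drives the rest of the paper. Both are sound; just make sure to cite the two standard facts (stabilizer of $C_J$ is $W_J$, and uniqueness of the minimal coset representative characterized by $w(J)\subseteq\Pi$) rather than treating them as free.
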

\begin{proof}
We first show that $w_F$ fulfills~\eqref{e:Dcol}. Since $F$ is a face of $F C_{\emptyset}=w_F\cdot C_{\emptyset}$, there exists $I\subseteq\Delta$ such that $F=w_F\cdot C_I$. 
By the uniqueness in~\eqref{e:col}, we must have $I=J$. 
Now suppose there exists $\alpha\in \Des(w_F)\cap J$.
Since $\alpha\in \Des(w_F)$, we have $w_F\alpha=-\beta$ for some positive root $\beta$. 
Since $\alpha\in J$,  we have from~\eqref{e:signJ} that
\[
\sigma_{\beta}(F)= \sigma_{\beta}(w_F\cdot C_J) = 0.
\]
Hence, from~\eqref{eq:product}, we have 
\[ 
\sigma_{\beta}(FC_{\emptyset}) = \sigma_{\beta}(C_{\emptyset}) = +.
\] 
On the other hand, again from ~\eqref{e:signJ},
\[
\sigma_{\beta}(w_F\cdot C_{\emptyset})= -.
\]
This contradicts $FC_{\emptyset}=w_F\cdot C_{\emptyset}$. Thus, 
$\Des(w_F) \subseteq \Delta\setminus J$.

It remains to establish uniqueness. Suppose $w\in W$
 satisfies~\eqref{e:Dcol}. Since $\Des(w)\subseteq\col(F)$, we may apply~\cite[Proposition 4]{Bro:2000} to the chambers $C_\emptyset$ and $w\cdot C_\emptyset$ to conclude that
$FC_\emptyset = w\cdot C_\emptyset$. Hence $w=w_F$ by~\eqref{e:wF}.
\end{proof}

The rank $1$ faces of $\Sigma$  
are of the form $w\cdot C_{\Delta\setminus\{\alpha\}}$
where $w\in W$ and $\alpha\in\Delta$. 
If we assign color $\alpha$
to all such faces, we obtain a \emph{balanced coloring} of $\Sigma$;
i.e., every chamber of $\Sigma$ has exactly one rank $1$ face of each color~\cite[Proposition 1.128]{AbrBro:2008}. The set $\col(F)$ 
defined by~\eqref{e:col}
is the set of colors of the rank $1$ faces of a face $F$. Thus,
the face $w\cdot C_J$ has color set $\Delta\setminus J$.

\subsection{Coxeter complexes}\label{ss:complex}

Let $S=\{s_\alpha \mid \alpha\in\Delta\}$ be the set of simple reflections. 
The group $W$ is generated by $S$ and in fact $(W,S)$ is a \emph{Coxeter system}.

Let $W_J :=\br{ s \mid s \in J }$ denote the \emph{parabolic} subgroup of $W$ generated by the subset $J \subseteq S$. The Coxeter complex of $(W,S)$ is the abstract simplicial complex whose faces are cosets of parabolic subgroups, 
\[
\Sigma(W,S) := \{ wW_J \mid w \in W, J \subseteq S\},
\] 
with inclusion of faces given by containment of subsets of $W$, i.e., 
\[ 
wW_J \leq vW_K \Leftrightarrow  wW_J \supseteq vW_K.
\] 
In particular, $W_S = W$ corresponds to the empty face, as it contains all cosets. The facets (maximal faces) of $\Sigma(W,S)$ correspond to the singletons $wW_{\emptyset} = \{ w\}$, and are thus indexed by elements of $W$.

We have that 
\[
w\cdot C_J \subseteq v\cdot C_K \iff wW_J \supseteq vW_K.
\]
This defines an order-preserving bijection $\Sigma\leftrightarrow \Sigma(W,S)$
between $\Sigma$ and the Coxeter complex 
$\Sigma(W,S)$~\cite[Theorem 1.111]{AbrBro:2008}. Chambers in $\Sigma$ correspond to facets in $\Sigma(W,S)$, and rank $1$ faces correspond to vertices. From now on, we refer to $\Sigma$ as the Coxeter complex.

For more details on finite Coxeter arrangements and complexes, see \cite[Sections~1.5 and 1.6]{AbrBro:2008}.

\subsection{The affine Coxeter arrangement}\label{ss:affine}

We assume from now on that the root system $\Phi$ is crystallographic (so $W$ is a Weyl group) and irreducible. For definitions, see~\cite[Appendix B]{AbrBro:2008} or~\cite[Section 2.9]{Hum:1990}.

Such a system has an associated \emph{affine Weyl group} $\aW$.
This is the group generated by the reflections $s_{\beta,k}$ through the affine hyperplanes
\begin{equation}\label{e:affarr}
H_{\beta,k} := \{\lambda \in V:\br{\lambda, \beta} = k\}
\qquad(\beta\in\Phi,\ k\in\Z).
\end{equation}

Let $\Phi^\vee$ be the set of \emph{coroots} 
\[
\beta^\vee:=2\beta/\br{\beta,\beta}
\] 
($\beta\in\Phi$). The additive subgroup of $V$ it generates is the \emph{coroot lattice} 
$\Z\Phi^\vee$.
Composing two reflections $s_{\beta,k}$ corresponding to the same $\beta$ results
in a translation by a vector in $\Z\Phi^\vee$; see Figure~\ref{fig:root}.
Thus, $\aW$ contains $\Z\Phi^\vee$. It also contains the finite Weyl group $W$, as this consists of the reflections across the hyperplanes $H_\beta$.
The crystallographic condition guarantees that the action of $W$ on $V$ stabilizes $\Z\Phi^\vee$, and
$\aW$ identifies with the semidirect product $\Z\Phi^\vee\rtimes W$.
The product in the latter group is
\[
(\mu,w)\cdot(\mu',w') = (\mu+w(\mu'), ww').
\]
The action of $\aW$ on $V$ extends the action of $W$ by linear reflections and the action of $\Z\Phi^\vee$ by translations:
\[
(\mu,w)\cdot \lambda = \mu+w(\lambda),
\]
for $\mu\in \Z\Phi^\vee$, $w\in W$, and $\lambda\in V$.

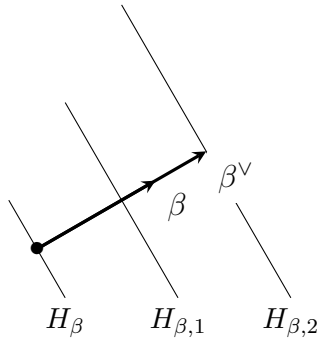
\begin{figure}[!h]
\begin{tikzpicture}[cm={1,0,.5,.8660254,(0,0)}, >=stealth,scale=1.5]
\draw (1,3) -- (2,2) node[below] {\small $H_{\beta}$};
\draw (1,4) -- (3,2) node[below] {\small $H_{\beta,1}$};
\draw (1,5) -- (4,2) node[below] {\small $H_{\beta,2}$};
\draw[very thick,->] (1.5,2.5) node {$\bullet$} -- (2.5,3.5) node[below right, fill=white] {$\small \beta^{\vee}$};
\draw[very thick,->] (1.5,2.5) -- (2.2,3.2) node[below right, fill=white] {$\small \beta$};
\end{tikzpicture}
\caption{A root and its coroot.}
\label{fig:root}
\end{figure}

Moreover, $\Phi$ has a unique \emph{highest} root $\hr$, the group
$\aW$ is generated by $\aff{S}:=S\cup\{s_{\hr,1}\}$, and $(\aW,\aff{S})$ is an irreducible Coxeter system~\cite[Sections 4.3 and 4.6]{Hum:1990}. 
For the root systems $A_2$ and $C_2$, the coroots corresponding to the positive roots are shown in Figures~\ref{fig:A2}(a) and~\ref{fig:C2}(a). For the root system $A_n$, we have $\alpha^\vee=\alpha$ for all $\alpha\in A_n$. For the root system $C_n$, some coroots are half the size of the corresponding root, others are equal.

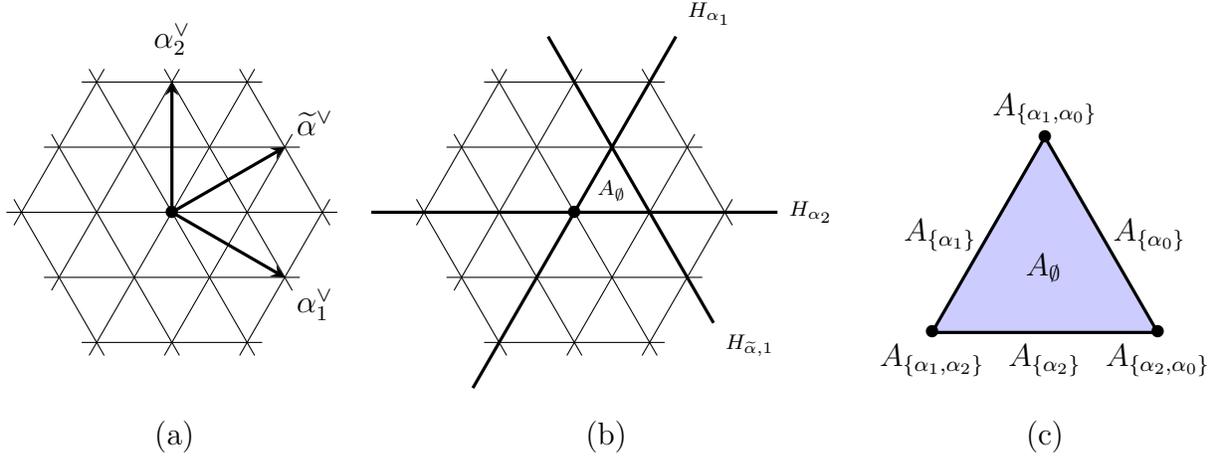
\begin{figure}[!h]
\begin{tabular}{c c c}
\begin{tikzpicture}[cm={1,0,.5,.8660254,(0,0)}, >=stealth,baseline=0]
\foreach \x in {3,4,5}{
\draw (-.2,\x)--(7.2-\x,\x);
\draw (-.2,\x+.2)--(\x-.8,.8);
\draw (\x-1,.8)--(\x-1,8.2-\x);
}
\foreach \x in {1,2}{
\draw (2.8-\x,\x)--(4.2,\x);
\draw (\x-.2,5.2)--(4.2,\x+.8);
\draw (\x-1,3.8-\x)--(\x-1,5.2);
}
\draw (2,3) node {$\bullet$};
\draw[very thick,->] (2,3)--(1,5) node[above,yshift=.2cm] {$\alpha_2^\vee$};
\draw[very thick,->] (2,3)--(3,4) node[above right] {$\hr^\vee$};
\draw[very thick,->] (2,3)--(4,2) node[below right] {$\alpha_1^\vee$};
\end{tikzpicture}
&
\begin{tikzpicture}[cm={1,0,.5,.8660254,(0,0)},baseline=0]
\foreach \x in {3,4,5}{
\draw (-.2,\x)--(7.2-\x,\x);
\draw (-.2,\x+.2)--(\x-.8,.8);
\draw (\x-1,.8)--(\x-1,8.2-\x);
}
\foreach \x in {1,2}{
\draw (2.8-\x,\x)--(4.2,\x);
\draw (\x-.2,5.2)--(4.2,\x+.8);
\draw (\x-1,3.8-\x)--(\x-1,5.2);
}
\draw (2,3) node {$\bullet$};
\draw[very thick] (-.7,3)--(4.7,3) node[right] {$\scriptstyle H_{\alpha_2}$};
\draw[very thick] (2,.3)--(2,5.7) node[above right] {$\scriptstyle H_{\alpha_1}$};
\draw[very thick] (.3,5.7)--(4.7,1.3) node[below right] {$\scriptstyle H_{\hr,1}$};
\draw (2.33,3.33) node {$\scriptstyle A_{\emptyset}$};
\end{tikzpicture}
&
\begin{tikzpicture}[cm={1,0,.5,.8660254,(0,0)},scale=3,baseline=-1cm]
\draw[fill=blue!20!white,very thick] (0,0) node {$\bullet$} node[below] {$A_{\{\alpha_1,\alpha_2\}}$} -- node[midway,below] {$A_{\{\alpha_2\}}$} (1,0) node {$\bullet$} node[below] {$A_{\{\alpha_2,\alpha_0\}}$} -- node[midway,right] {$A_{\{\alpha_0\}}$} (0,1) node {$\bullet$} node[above] {$A_{\{\alpha_1,\alpha_0\}}$} -- node[midway,left] {$A_{\{\alpha_1\}}$} (0,0);
\draw (.33,.33) node {$A_{\emptyset}$};
\end{tikzpicture}
\\
(a) & (b) & (c)
\end{tabular}
\caption{The affine arrangement $\aff{\Hy}(A_2)$. (a) Positive (co)roots. (b) Affine hyperplanes and the fundamental alcove.
(c) The faces of the fundamental alcove.}
\label{fig:A2}
\end{figure}

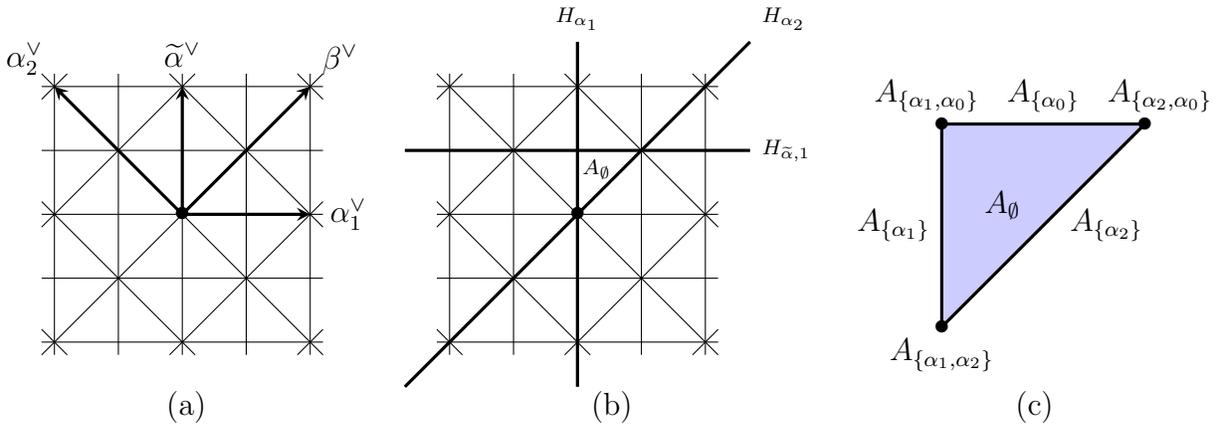
\begin{figure}[!h]
\begin{tabular}{c c c}
\begin{tikzpicture}[scale=.85,>=stealth,baseline=0]
\draw (-2.2,2.2)--(2.2,-2.2);
\draw (-2.2,-2.2)--(2.2,2.2);
\foreach \x in {-2,...,2}{
\draw (-2.2,\x)--(2.2,\x);
\draw (\x,-2.2)--(\x,2.2);
}
\foreach \x in {-1,1}{
\draw (\x*2.2,\x*1.8) -- (\x*1.8,\x*2.2);
\draw (\x*2.2,-\x*1.8) -- (\x*1.8,-\x*2.2);
\draw (\x*2.2,-\x*.2) -- (-\x*.2,\x*2.2);
\draw (\x*2.2,\x*.2) -- (-\x*.2,-\x*2.2);
}
\draw (0,0) node {$\bullet$};
\draw[very thick,->] (0,0)--(2,0) node[right, xshift=3pt] {$\alpha_1^\vee$};
\draw[very thick,->] (0,0)--(0,2) node[above, yshift=3pt] {$\hr^\vee$};
\draw[very thick,->] (0,0)--(-2,2) node[above left] {$\alpha_2^\vee$};
\draw[very thick,->] (0,0)--(2,2) node[above right] {$\beta^\vee$};
\end{tikzpicture}
&
\begin{tikzpicture}[scale=.85,>=stealth,baseline=0]
\foreach \x in {-2,...,2}{
\draw (-2.2,\x)--(2.2,\x);
\draw (\x,-2.2)--(\x,2.2);
}
\foreach \x in {-1,1}{
\draw (\x*2.2,\x*1.8) -- (\x*1.8,\x*2.2);
\draw (\x*2.2,-\x*1.8) -- (\x*1.8,-\x*2.2);
\draw (\x*2.2,-\x*.2) -- (-\x*.2,\x*2.2);
\draw (\x*2.2,\x*.2) -- (-\x*.2,-\x*2.2);
}
\draw (-2.2,2.2)--(2.2,-2.2);
\draw[very thick] (-2.7,-2.7)--(2.7,2.7) node[above right] {$\scriptstyle H_{\alpha_2}$};
\draw[very thick] (0,-2.7)--(0,2.7) node[above] {$\scriptstyle H_{\alpha_1}$};
\draw[very thick] (-2.7,1)--(2.7,1) node[right] {$\scriptstyle H_{\hr,1}$};
\draw (0,0) node {$\bullet$};
\draw (.3,.7) node {$\scriptstyle A_{\emptyset}$};
\end{tikzpicture}
&
\begin{tikzpicture}[scale=2.7,baseline=1.5cm]
\draw[fill=blue!20!white,very thick] (0,0) node {$\bullet$} node[below] {$A_{\{\alpha_1,\alpha_2\}}$} -- node[midway,right,xshift=.2cm] {$A_{\{\alpha_2\}}$} (1,1) node {$\bullet$} node[above,xshift=.2cm] {$A_{\{\alpha_2,\alpha_0\}}$} -- node[midway,above] {$A_{\{\alpha_0\}}$} (0,1) node {$\bullet$} node[above,xshift=-.2cm] {$A_{\{\alpha_1,\alpha_0\}}$} -- node[midway,left] {$A_{\{\alpha_1\}}$} (0,0);
\draw (.3,.6) node {$A_{\emptyset}$};
\end{tikzpicture}
\\
(a) & (b) & (c)
\end{tabular}
\caption{The affine arrangement $\aff{\Hy}(C_2)$.
(a) Positive coroots: $\alpha_1^\vee=\frac{1}{2}\alpha_1$, $\hr^\vee=\frac{1}{2}\hr$, $\alpha_2^\vee=\alpha_2$, $\beta^\vee=\beta$. (b) Affine hyperplanes and the fundamental alcove.
(c) The faces of the fundamental alcove.}
\label{fig:C2}
\end{figure}

The \emph{affine Coxeter arrangement} is 
\[ 
\aff{\Hy}(\Phi) := \{ H_{\beta, k} \mid \beta \in \Pi, k \in \Z\}.
\] 
Two examples are shown in Figures~\ref{fig:A2}(b) and~\ref{fig:C2}(b).

Let $\aSigma$ denote the set of faces of $\aff{\Hy}(\Phi)$.
Since $\aff{\Hy}(\Phi)$ is locally finite, $\aSigma$ is a semigroup under the Tits product.

In the poset $\aSigma$ (with faces ordered by inclusion of their closures), every vertex is
a minimal element. Adding in a smallest element turns $\aSigma$ into a simplicial complex, isomorphic to the Coxeter complex of $\aW$~\cite[Proposition~10.13]{AbrBro:2008}. 
The Tits product can be extended as well, by letting the smallest element act as a unit.
This turns the semigroup $\aSigma$ into a monoid.

Since in  $\aff{\Hy}(\Phi)$ there is one parallelism class for each linear hyperplane in 
$\Hy(\Phi)$,
the set of celestial faces of $\aff{\Hy}(\Phi)$ identifies with the finite Coxeter complex 
$\Sigma$ minus its central face. Proposition~\ref{p:celestial} then yields a right $\Sigma$-module structure on $\aSigma$. (The cone built on the affine arrangement as in the proof of the proposition is in this context known as the \emph{Tits cone}.)

The maximal faces of $\aSigma$ are called \emph{alcoves}, 
\emph{chambers} being reserved for the maximal faces of $\Sigma$.
The set $\aCc$ of alcoves is a two-sided ideal in the semigroup $\aSigma$.
The right action of $\Sigma$ on $\aCc$ is trivial. On the other hand,
the right action of a a chamber $C\in\Cc$ on a face $F\in \aSigma$  
results in an alcove $FC\in\aCc$.
 
%

The \emph{fundamental} alcove is
\[
A_\emptyset:= C_\emptyset \cap \{ \lambda \in V \mid \br{\lambda, \hr} < 1\}.
\]
The faces of $A_\emptyset$ are the sets of the form
\begin{equation}\label{e:AJ}
A_J:=\begin{cases}
  \hbox to 31pt{\hfill $C_J$} \cap \{ \lambda \in V \mid \br{\lambda, \hr} < 1\}
    &\text{if $\alpha_0\notin J$},\\
  C_{J\setminus\{\alpha_0\}} \cap \{ \lambda \in V \mid \br{\lambda, \hr} = 1\}
    &\text{if $\alpha_0\in J$,}
\end{cases}
\end{equation}
where $J$ is a proper subset of $\aDelta$ and $C_J$ is as in~\eqref{e:CJ}.
See Figures~\ref{fig:A2}(c) and~\ref{fig:C2}(c).

The closure of $A_\emptyset$ is a fundamental domain for the action of $\aW$ on $V$.
Hence, each face in $\aSigma$ is of the form
$\mu+w\cdot A_J$, where $\mu\in \Z\Phi^\vee$, $w\in W$, and $J$ is a proper subset of $\aDelta$.

We may let $J=\aDelta$ in~\eqref{e:AJ}. Then $A_J=\emptyset$, and we may think of this
\emph{empty face} as the smallest element that when added to $\aSigma$ turns it into a simplicial complex (and a monoid).
 
The vertices of $\aSigma$ are of the form
$\mu+w\cdot A_{\aDelta\setminus\{\alpha\}}$, where $\mu$ and $w$ are as before, and $\alpha\in\aDelta$.
If we assign color $\alpha$ to all such vertices, we obtain a balanced coloring of $\aSigma$.
The face $F=\mu+w\cdot A_J$ receives color set $\aDelta\setminus J$, which we denote $\col(F)$, just as for faces of the Coxeter complex.

A face $F\in\aSigma$ can be encoded by an infinite sign vector $\sigma(F)$ that records whether the face is ``above", ``below", or ``on" a particular hyperplane~\cite[Section 2.7]{AbrBro:2008}. 
We have 
\begin{equation}\label{eq:expanded}
\sigma(F) = \bigl( \sigma_{\beta,k}(F) \bigr)_{\beta \in \Pi,\, k \in \Z}\,,
\end{equation}
where $\sigma_{\beta,k}(F)\in\{+,-,0\}$ is the sign of $\br{ \lambda, \beta} - k$ for points $\lambda\in F$. 
Fix $\beta\in\Pi$. The face $F$ will be contained in a stripe
between two consecutive hyperplanes of the form $H_{\beta, k}$. More precisely,
there is a unique $j$ such that 
\begin{equation}\label{e:signj}
\sigma_{\beta,k}(F) = \begin{cases}
+ & \text{ if }k<j, \\
 -        & \text{ if }k>j,\\
 + \text{ or } 0 & \text{ if }k=j.
\end{cases}
\end{equation} 
Let $k_{\beta}(F)$ denote this integer $j$.
Equivalently, $k_{\beta}(F) = j$ means $j \leq \br{ \lambda, \beta } < j +1$ for points $\lambda\in F$, and 
$\sigma_{\beta,j}(F) = 0$ or $+$ according to whether $\br{ \lambda, \beta }$ is equal to or greater than $j$. 
In view of~\eqref{e:signj}, to gain full knowledge of the sign vector of $F$, we need no more than the pairs $(k_{\beta},\sigma_{\beta,k_{\beta}}(F))$  for all $\beta\in\Pi$. Thus, let us write instead  
\begin{equation}\label{eq:compact}
 \sigma(F) = \bigl(k_{\beta}(F),\sigma_{\beta,k_{\beta}(F)}(F)\bigr)_{\beta \in \Pi}.
\end{equation}
We refer to \eqref{eq:expanded} and \eqref{eq:compact} as the \emph{expanded} and  
\emph{compact} sign vector of $F$, respectively. 

For example, if $\Phi=A_2$ and $F$ is the edge given by 
\[
\{ \lambda \in \R^2 \mid \br{ \lambda, \alpha_1 } = 1,\ -1 < \br{ \lambda, \alpha_2 } < 0,\ 0 < \br{ \lambda, \hr } < 1 \}
\] 
(see Figure~\ref{fig:signvector}), the entries of its compact sign vector are 
\[
\sigma_{\alpha_1}(F) = (1,0), \quad
\sigma_{\alpha_2}(F)=(-1,+), \qand
\sigma_{\hr}(F)=(0,+).
\]

\begin{figure}[!h]
\begin{tikzpicture}[cm={1,0,.5,.8660254,(0,0)},baseline=0]
\foreach \x in {3,4,5}{
\draw (-.2,\x)--(7.2-\x,\x);
\draw (-.2,\x+.2)--(\x-.8,.8);
\draw (\x-1,.8)--(\x-1,8.2-\x);
}
\foreach \x in {1,2}{
\draw (2.8-\x,\x)--(4.2,\x);
\draw (\x-.2,5.2)--(4.2,\x+.8);
\draw (\x-1,3.8-\x)--(\x-1,5.2);
}
\draw (2,3) node {$\bullet$};
\draw[very thick] (-.7,3)--(4.7,3) node[right] {$\scriptstyle H_{\alpha_2}$};
\draw (.3,2)--(4.7,2) node[right] {$\scriptstyle H_{\alpha_2,-1}$};
\draw[very thick] (2,.3)--(2,5.7) node[above right] {$\scriptstyle H_{\alpha_1}$};
\draw (3,.3)--(3,4.7) node[above right] {$\scriptstyle H_{\alpha_1,1}$};
\draw (.3,5.7)--(4.7,1.3) node[below right] {$\scriptstyle H_{\hr,1}$};
\draw[very thick] (-.7,5.7)--(4.7,.3) node[below right] {$\scriptstyle H_{\hr}$};
\draw (3,2.5) node[fill=white] {$\scriptstyle F$};
\end{tikzpicture}
\caption{An edge in the affine arrangement $\aff{\Hy}(A_2)$.}
\label{fig:signvector}
\end{figure}

The compact sign vector of the fundamental alcove $A_{\emptyset}$ has
\[
\sigma_\beta(A_{\emptyset}) =(0,+)
\]
for every $\beta\in\Pi$.

For $w \in W$, let \[ \aDes(w) = \{ \alpha \in \aDelta \mid w(\alpha) < 0\},\] which we call the \emph{affine descent set} of $w$. This set is discussed in more detail in Section \ref{ss:affdes}.

Arguments similar to those in Proposition~\ref{prp:Dcol} lead to the following result.

\begin{prp}\label{prp:affDcol}
Given a face $F\in\aSigma$, there are unique $\mu\in \Z\Phi^\vee$, $w\in W$, and a proper subset $J$ of $\aDelta$ such that
\begin{equation}\label{e:affDcol}
F = \mu+w\cdot A_J \qand \aDes(w) \subseteq \col(F).
\end{equation}
\end{prp}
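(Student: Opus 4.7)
The plan is to mirror the proof of Proposition~\ref{prp:Dcol}, replacing the finite Weyl group $W$ and the fundamental chamber $C_\emptyset$ with the affine Weyl group $\aW=\Z\Phi^\vee\rtimes W$ and the fundamental alcove $A_\emptyset$. The key geometric input is that $\aW$ acts simply transitively on the set $\aCc$ of alcoves, so for each face $F\in\aSigma$ the Tits projection $FA_\emptyset\in\aCc$ is of the form $\mu_F+w_F\cdot A_\emptyset$ for a unique pair $(\mu_F,w_F)\in\Z\Phi^\vee\times W$. This pair will serve as the candidate for $(\mu,w)$.

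For existence, since $F\leq FA_\emptyset=\mu_F+w_F\cdot A_\emptyset$ and the faces of this alcove are precisely the translates $\mu_F+w_F\cdot A_J$ for proper $J\subsetneq\aDelta$, there is a unique $J$ with $F=\mu_F+w_F\cdot A_J$, and the balanced coloring of $\aSigma$ forces $J=\aDelta\setminus\col(F)$. To verify $\aDes(w_F)\subseteq\col(F)$, I would argue by contradiction exactly as in Proposition~\ref{prp:Dcol}: assuming $\alpha\in\aDes(w_F)\cap J$, an affine analog of~\eqref{e:signJ} together with the product rule~\eqref{eq:product} should force $\sigma(FA_\emptyset)$ and $\sigma(\mu_F+w_F\cdot A_\emptyset)$ to disagree on some coordinate---either a linear hyperplane $H_\beta$ when $\alpha\in\Delta$, or the hyperplane $H_{\hr,1}$ when $\alpha=\alpha_0$.

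For uniqueness, suppose a triple $(\mu,w,J)$ satisfies~\eqref{e:affDcol}. The condition $\aDes(w)\subseteq\col(F)$ should imply $FA_\emptyset=\mu+w\cdot A_\emptyset$ via a suitable affine adaptation of~\cite[Proposition~4]{Bro:2000}, applied to the alcoves $A_\emptyset$ and $\mu+w\cdot A_\emptyset$ inside the affine Coxeter complex $\aSigma$. Simple transitivity of $\aW$ on $\aCc$ then forces $(\mu,w)=(\mu_F,w_F)$, and $J=\aDelta\setminus\col(F)$ follows from the balanced-coloring argument.

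The main obstacle I expect is the treatment of the affine simple root $\alpha_0$: a descent at $\alpha_0$ involves the affine hyperplane $H_{\hr,1}$ and interacts nontrivially with the translation $\mu_F$, so the sign-vector bookkeeping must be carried out using the integers $k_\beta(F)$ appearing in the compact sign vectors~\eqref{eq:compact}, rather than just sign patterns on linear hyperplanes. Once Brown's gallery argument has been carefully adapted to the affine Coxeter complex (with $\aDelta$ in place of $\Delta$), the remaining steps should translate cleanly from the finite case.
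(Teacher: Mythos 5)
Your candidate element is built from the wrong projection, and this breaks both halves of the argument. You define $(\mu_F,w_F)$ by the Tits projection $FA_\emptyset$ of $F$ onto the fundamental alcove \emph{inside} $\aSigma$, whereas the pair that satisfies the affine-descent condition is determined by $FC_\emptyset$, the right action of the \emph{celestial} fundamental chamber $C_\emptyset\in\Sigma$ on $F$ (this is exactly how the paper pins down $\mu$ and $w$; see~\eqref{e:wF2}). The two projections genuinely differ, and with your definition the descent condition fails. Already in rank one (take $\Phi=\{\pm\alpha\}$, $\Delta=\{\alpha_1\}$ with $\alpha_1=\hr=\alpha$, $\alpha_0=-\alpha$, and coordinate $x=\br{\lambda,\alpha}$, so that $A_\emptyset=\{0<x<1\}$ and translation by $\alpha^\vee$ shifts $x$ by $2$): let $F$ be the vertex $\{x=2\}=\alpha^\vee+A_{\{\alpha_1\}}$, so $\col(F)=\{\alpha_0\}$. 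Walking from $F$ toward a point of $A_\emptyset$ enters the alcove $\{1<x<2\}=\alpha^\vee+s_1\cdot A_\emptyset$, so your recipe gives $w_F=s_1$, with $\aDes(s_1)=\{\alpha_1\}\not\subseteq\col(F)$; the correct element is $w=\id$, coming from $FC_\emptyset=\{2<x<3\}=\alpha^\vee+A_\emptyset$. The same example refutes the implication you invoke for uniqueness: the triple $(\mu,w,J)=(\alpha^\vee,\id,\{\alpha_1\})$ satisfies~\eqref{e:affDcol}, yet $FA_\emptyset\neq\mu+w\cdot A_\emptyset$.

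The conceptual reason your ``affine adaptation of Brown's Proposition~4'' cannot be made to work as stated is that Cellini's set $\aDes(w)$ is defined for $w$ in the \emph{finite} group $W$ by the signs of $w$ on $\aDelta$; it is not the Coxeter-theoretic descent set of the element $(\mu,w)$ of $(\aW,\aff{S})$, and it is the latter that a gallery argument applied to the alcoves $A_\emptyset$ and $\mu+w\cdot A_\emptyset$ inside $\aSigma$ controls. The two notions diverge precisely through the translation part $\mu$, which is why projecting onto $A_\emptyset$ produces an element whose descents reflect the position of $F$ relative to the fundamental alcove rather than the data recorded by $\col(F)$. The condition $\aDes(w)\subseteq\col(F)$ instead governs the finitely many hyperplane \emph{directions}, i.e.\ the projection of $F$ along the direction at infinity given by $C_\emptyset$. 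The repair is to run your existence and uniqueness arguments with $FC_\emptyset$ in place of $FA_\emptyset$: this is an alcove (the right action of a chamber of $\Sigma$ on a face of $\aSigma$ yields an alcove), simple transitivity of $\aW$ on alcoves gives a unique $(\mu_F,w_F)$, and the sign-vector computation of Proposition~\ref{prp:Dcol}, carried out with the compact sign vectors~\eqref{eq:compact} so that the case $\alpha=\alpha_0$ is handled through the hyperplane $H_{\hr,k_{\hr}(F)}$, then goes through as in the paper.
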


The elements $\mu$ and $w$ are determined by 
\[
\mu + w\cdot A_{\emptyset} = FC_{\emptyset},
\]
where the latter is the right action of the fundamental chamber $C_{\emptyset}\in\Sigma$ on
the face $F\in\aSigma$.
The existence and uniqueness of $\mu$ and $w$ follow from the fact that the action of $\aW$ on the set of alcoves is simply transitive. Letting
\[
w_F := (\mu,w)\in \Z\Phi^\vee\rtimes W =\aW,
\]
the condition defining this element of the affine Weyl group can be rewritten as
\begin{equation}\label{e:wF2}
w_F\cdot A_{\emptyset} = FC_{\emptyset}.
\end{equation}

\subsection{The Steinberg torus}\label{ss:invariance}

Consider the action of the coroot lattice $\Z\Phi^\vee$ by translations on the ambient space $V$.
This action preserves the arrangement $\aff{\Hy}(\Phi)$ (see Figure~\ref{fig:A2}),
and hence also the set of faces $\aSigma$. The \emph{Steinberg torus} \cite{DPS:2009} is the set of orbits for the action of the coroot lattice on the set of faces:
\[ 
\sSigma:=\aSigma/\Z\Phi^{\vee}.
\] 

As $\aSigma$ partitions $V$, $\sSigma$ partitions the geometric torus $V/\Z\Phi^{\vee}$. 
This cell decomposition of the torus is not simplicial, as different faces may share the same vertex set, but it does have the property that the poset
of subfaces of a face is isomorphic to the poset of nonempty subsets of the vertex set of the face. Let $\sCc$ denote the set of maximal faces of $\sSigma$.


The Steinberg torus may also be obtained as a quotient of the convex polytope 
\[
P_{\Phi} = \{ \lambda \in V \mid -1\leq \br{ \lambda, \alpha } \leq 1 \mbox{ for all } \alpha \in \Phi\}. 
\] 
A point $\lambda$ on the boundary of $P_{\Phi}$ satisfies $\br{ \lambda, \beta } = -1$ for some root $\beta\in\Phi$. To obtain the torus, each such point $\lambda$ is identified with the point $\lambda':=\lambda+\beta^\vee$, which satisfies $\br{ \lambda', \beta } = 1$
and also lies on the boundary. 
See Figure \ref{fig:tori}.
The polytope $P_{\Phi}$ is the union of the closures of the alcoves of the form $w\cdot A_{\emptyset}$, $w \in W$. It is an \emph{alcoved polytope} in the sense of~\cite{LamPos:2007}, see particularly~\cite[Section 4]{LamPos:2012}. 

\begin{figure}[!h]
\begin{tikzpicture}[cm={1,0,.5,.8660254,(0,0)},baseline=0,scale=2]
\draw[draw=none,fill=black!20!white] (-1,0)--(-1,1)--(0,1)--(1,0)--(1,-1)--(0,-1)--(-1,0);
\draw[very thick] (-1,1)--(0,1);
\draw[very thick] (1,-1)--(0,-1);
\draw[very thick,dash pattern=on 10pt off 5pt] (-1,0)--(0,-1);
\draw[very thick,dash pattern=on 10pt off 5pt] (1,0)--(0,1);
\draw[very thick,dashed] (-1,0)--(-1,1);
\draw[very thick,dashed] (1,-1)--(1,0);
\draw[very thick] (-1,1)--(1,-1);
\draw[very thick] (-1,0)--(1,0);
\draw[very thick] (0,-1)--(0,1);
\draw (-1,0) node {$\bullet$};
\draw (1,0) node {$\bullet$};
\draw (-1,1) node {$\bullet$};
\draw (0,1) node {$\bullet$};
\draw (0,-1) node {$\bullet$};
\draw (1,-1) node {$\bullet$};
\draw (0,0) node {$\bullet$};
\end{tikzpicture}
\hspace{3cm}
\begin{tikzpicture}[scale=2,baseline=0]
\draw[draw=none,fill=black!20!white] (-1,-1)--(-1,1)--(1,1)--(1,-1)--(-1,-1);
\draw[very thick,dash pattern=on 10pt off 5pt] (-1,1)--(-1,-1);
\draw[very thick,dash pattern=on 10pt off 5pt] (1,1)--(1,-1);
\draw[very thick] (-1,1)--(1,1);
\draw[very thick] (-1,-1)--(1,-1);
\draw[very thick] (-1,1)--(1,-1);
\draw[very thick] (-1,-1)--(1,1);
\draw[very thick] (0,1)--(0,-1);
\draw[very thick] (-1,0)--(1,0);
\draw (-1,1) node {$\bullet$};
\draw (1,1) node {$\bullet$};
\draw (-1,0) node {$\bullet$};
\draw (1,0) node {$\bullet$};
\draw (-1,-1) node {$\bullet$};
\draw (1,-1) node {$\bullet$};
\draw (0,1) node {$\bullet$};
\draw (0,-1) node {$\bullet$};
\draw (0,0) node {$\bullet$};
\end{tikzpicture}
\caption{The polytopes $P_{A_2}$ and $P_{C_2}$. The Steinberg tori are obtained by identifying points on the boundary.}
\label{fig:tori}
\end{figure}
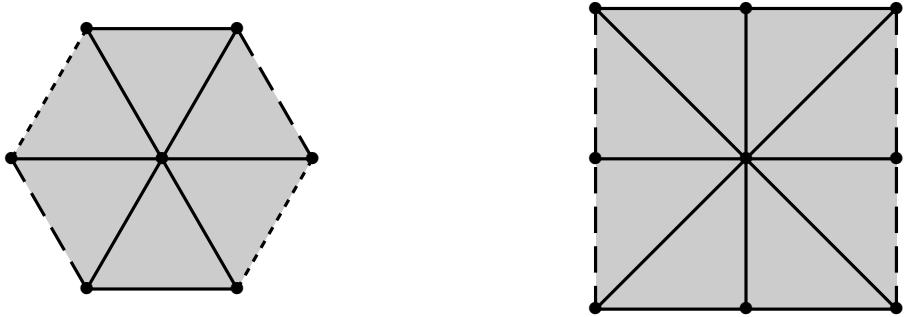

The geometric construction of the product of 
faces (Figure~\ref{fig:fcprod}) shows that translations  commute with the right action of 
celestial faces on the set of mundane faces. We thus have the following.

%

\begin{prp}\label{prp:productinvariance}
Let $F \in \aSigma$, $G \in \Sigma$, and $\mu \in \Z\Phi^{\vee}$. Then
$(\mu + F)G = \mu + FG$.
\end{prp}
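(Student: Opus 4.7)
The plan is to compare sign vectors on both sides. The starting point is that translation by $\mu\in\Z\Phi^\vee$ is a symmetry of the arrangement $\aff{\Hy}(\Phi)$: for any $\lambda\in V$ and $H_{\beta,k}\in\aff{\Hy}(\Phi)$ we have
\[
f_{\beta,k}(\mu+\lambda) = \br{\mu+\lambda,\beta}-k = \br{\lambda,\beta}-\bigl(k-\br{\mu,\beta}\bigr),
\]
and $\br{\mu,\beta}\in\Z$ because $\mu$ lies in the coroot lattice and $\Phi$ is crystallographic. Hence $\mu+H_{\beta,k}=H_{\beta,k-\br{\mu,\beta}}$ is another hyperplane of the arrangement, so translation sends faces of $\aSigma$ to faces of $\aSigma$, with
\[
\sigma_{\beta,k}(\mu+F)=\sigma_{\beta,k-\br{\mu,\beta}}(F).
\]

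Next I would extract the sign-vector rule for a mundane-celestial product $FG$ by lifting to the cone arrangement $\cHy$ used in the proof of Proposition~\ref{p:celestial}. The mundane face $F$ lifts to a face in the positive halfspace of $H_0$ whose sign at the linear hyperplane $\widehat{H_{\beta,k}}$ is $\sigma_{\beta,k}(F)$, while the celestial face $G$ lifts to a face of $H_0$ whose sign at each $\widehat{H_{\beta,k}}$ is $\sigma_{\beta}(G)$ (all $\widehat{H_{\beta,k}}$ for varying $k$ meet $H_0$ in the single hyperplane $H_\beta$). Applying the Tits-product rule~\eqref{eq:product} in $\cHy$ yields
\[
\sigma_{\beta,k}(FG) = \begin{cases} \sigma_{\beta,k}(F), & \sigma_{\beta,k}(F)\neq 0,\\ \sigma_\beta(G), & \sigma_{\beta,k}(F)=0.\end{cases}
\]

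Combining the two formulas now gives, for every $\beta\in\Pi$ and $k\in\Z$,
\[
\sigma_{\beta,k}\bigl((\mu+F)G\bigr) = \sigma_{\beta,k-\br{\mu,\beta}}(FG) = \sigma_{\beta,k}(\mu+FG),
\]
and since a face of $\aff{\Hy}(\Phi)$ is determined by its sign vector, the two faces coincide. The main subtlety is purely bookkeeping: one must justify the mundane-celestial product rule by invoking the cone picture, after which the crystallographic integrality of $\br{\mu,\beta}$ does all the work. A fully geometric variant of the argument is also available and perhaps preferable: rays at $\lambda$ and $\mu+\lambda$ representing a common celestial point are parallel, translation by $\mu$ preserves the arrangement, so the short path from $\mu+F$ in the direction of $G$ is the $\mu$-translate of the short path from $F$ in the direction of $G$, and the first mundane face it enters is $\mu+FG$.
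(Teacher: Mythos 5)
Your argument is correct, and its main thrust differs from the paper's. The paper disposes of this proposition with a purely geometric remark: the product of a mundane face $F$ with a celestial face $G$ is obtained by walking a short positive distance from a point of $F$ in the direction of a celestial point of $G$; since a celestial point is a parallelism class and translation by $\mu\in\Z\Phi^\vee$ preserves the arrangement $\aff{\Hy}(\Phi)$, the walk from $\mu+F$ toward $G$ is the $\mu$-translate of the walk from $F$ toward $G$ — exactly the ``geometric variant'' you mention in your last sentence. Your primary route instead verifies the identity on sign vectors: the translation rule $\sigma_{\beta,k}(\mu+F)=\sigma_{\beta,k-\br{\mu,\beta}}(F)$ (valid because $\br{\mu,\beta}\in\Z$ by crystallographicity), together with the mixed product rule $\sigma_{\beta,k}(FG)=\sigma_{\beta,k}(F)$ if nonzero and $\sigma_{\beta}(G)$ otherwise, which you justify via the cone $\cHy$ but which can also be checked directly from $\br{\lambda+\epsilon v,\beta}-k=(\br{\lambda,\beta}-k)+\epsilon\br{v,\beta}$. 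This is more bookkeeping than the paper needs, but it buys an explicit sign-vector formula for the mundane--celestial product, which is precisely the kind of computation the paper later leans on (e.g.\ in the combinatorial description of products in Section~\ref{sec:models}). One small slip that does not affect the proof: from your own displayed identity one gets $\mu+H_{\beta,k}=H_{\beta,k+\br{\mu,\beta}}$, not $H_{\beta,k-\br{\mu,\beta}}$; the relation you actually use, $\sigma_{\beta,k}(\mu+F)=\sigma_{\beta,k-\br{\mu,\beta}}(F)$, is stated correctly, and either way translation permutes the hyperplanes of the arrangement, which is all that is needed.
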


Figure \ref{fig:affprodA} illustrates this fact.

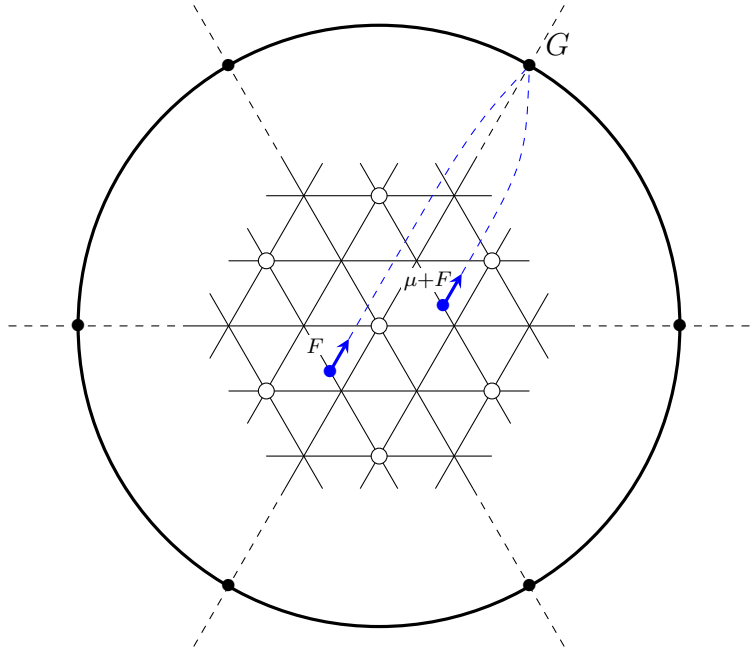
\begin{figure}[!h]
\begin{tikzpicture}
\draw (0,0) node{
\begin{tikzpicture}[cm={1,0,.5,.8660254,(0,0)},baseline=0,>=stealth]
\foreach \x in {3,4,5}{
\draw (-.5,\x)--(7.5-\x,\x);
\draw (-.5,\x+.5)--(\x-.5,.5);
\draw (\x-1,.5)--(\x-1,8.5-\x);
}
\foreach \x in {1,2}{
\draw (2.5-\x,\x)--(4.5,\x);
\draw (\x-.5,5.5)--(4.5,\x+.5);
\draw (\x-1,3.5-\x)--(\x-1,5.5);
}
\draw[dashed] (4.5,3)--(7,3);
\draw[dashed] (-.5,3)--(-3,3);
\draw[dashed] (4.5,.5)--(7,-2);
\draw[dashed] (-.5,5.5)--(-3,8);
\draw[dashed] (2,5.5)--(2,8);
\draw[dashed] (2,.5)--(2,-2);
\foreach \x in {0,3}{
\draw (1,5-\x) node {
 \begin{tikzpicture} 
 \draw[fill=white] (0,0) circle (3pt); \end{tikzpicture}
};
\draw (3,4-\x) node {
 \begin{tikzpicture} 
 \draw[fill=white] (0,0) circle (3pt); \end{tikzpicture}
};
}
\foreach \x in {2,3,4}{
\draw (8-2*\x,\x) node {
 \begin{tikzpicture} 
 \draw[fill=white] (0,0) circle (3pt); \end{tikzpicture}
};
}
\draw (1.3,2.7) node[fill=white, inner sep=1pt] {$\scriptstyle F$};
\draw (2.3,3.7) node[fill=white, inner sep=1pt] {$\scriptstyle \mu+F$};
\draw[blue,->,very thick] (1.7,2.3) node {$\bullet$} --(1.7,2.8);
\draw[blue,->,very thick] (2.7,3.3) node {$\bullet$} --(2.7,3.8);
\draw[blue,dashed] (1.7,2.8) .. controls (1.7,6) ..  (2,7);
\draw[blue,dashed] (2.7,3.8) .. controls (2.7,5.5) ..  (2,7);
\draw (2,7) node {$\bullet$};
\draw (2,7) node[above right,xshift=2pt] {$G$};
\draw (6,3) node {$\bullet$};
\draw (6,-1) node {$\bullet$};
\draw (2,-1) node {$\bullet$};
\draw (-2,3) node {$\bullet$};
\draw (-2,7) node {$\bullet$};
\end{tikzpicture}
};
\draw[very thick] (0,0) circle (4);
\end{tikzpicture}
\caption{Translate first and then walk, or vice versa. Elements of the coroot lattice are indicated with white circles.}\label{fig:affprodA}
\end{figure}

%
%


As an immediate consequence of Proposition \ref{prp:productinvariance}, we have the following.

\begin{cor}\label{cor:productinvariance}
The set $\sSigma$ is a right $\Sigma$-module and the canonical quotient map
\[
\aSigma\onto\sSigma
\]
is a morphism of $\Sigma$-modules.
\end{cor}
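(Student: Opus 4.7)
The plan is to deduce the corollary directly from Proposition~\ref{prp:productinvariance}, using the standard principle that an equivariant action descends to the quotient. I would spend essentially no effort on new geometry; all the geometric content has been packaged into the translation-invariance identity $(\mu+F)G = \mu + FG$.

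First I would define the candidate action of $\Sigma$ on $\sSigma = \aSigma/\Z\Phi^\vee$. Writing $[F]\in\sSigma$ for the orbit of $F\in\aSigma$ under the translation action of $\Z\Phi^\vee$, I set
\[
[F]\cdot G := [FG] \qquad(F\in\aSigma,\ G\in\Sigma).
\]
To check that this is well-defined, I would take two representatives $F,F'$ of the same orbit, so $F' = \mu + F$ for some $\mu\in\Z\Phi^\vee$, and apply Proposition~\ref{prp:productinvariance} to obtain
\[
F'G = (\mu+F)G = \mu + FG,
\]
which shows $[F'G] = [FG]$. This is the only real verification; once it is in hand, the rest is formal.

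Next I would verify the module axioms. Associativity $([F]\cdot G)\cdot G' = [F]\cdot(GG')$ follows from $(FG)G' = F(GG')$, which holds in $\aSigma$ because $\aSigma$ is a right $\Sigma$-module (Proposition~\ref{p:celestial}, applied in the form used at the end of Section~\ref{ss:affine}). The unit axiom, if relevant, passes down the same way. Thus $\sSigma$ inherits the structure of a right $\Sigma$-module.

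Finally, the quotient map $q\colon \aSigma\onto\sSigma$, $F\mapsto [F]$, satisfies $q(FG) = [FG] = [F]\cdot G = q(F)\cdot G$ by the very definition of the action on $\sSigma$, so $q$ is a morphism of right $\Sigma$-modules. The only substantive step is the well-definedness check, and it is immediate from Proposition~\ref{prp:productinvariance}; there is no genuine obstacle.
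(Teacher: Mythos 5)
Your proof is correct and is exactly the argument the paper has in mind: the paper presents the corollary as an immediate consequence of Proposition~\ref{prp:productinvariance}, and your well-definedness check $[(\mu+F)G]=[\mu+FG]=[FG]$, together with associativity inherited from the right $\Sigma$-module structure on $\aSigma$, is precisely the routine verification being left implicit. Nothing is missing and no different route is taken.
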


Let $\overline{F}\in\sSigma$ denote the $\Z\Phi^\vee$-orbit of a face $F\in \aSigma$.
Given $G\in\Sigma$, we let $\overline{F}G\in\sSigma$ denote the right action of $G$ on $F$. 

Note, however, that the product of translates of two faces $F$ and $G$ of $\aSigma$ 
is in general \emph{not} a translate of $FG$.
For this reason, the semigroup structure of $\aSigma$ does not descend to $\sSigma$.

The finite Weyl group $W$ acts on $\sSigma$. This is explained in more detail in Section~\ref{sec:modules}; see the discussion around~\eqref{e:semilinear-coroot}. (Another way to see this is by noting that the polytope $P_{\Phi}$ is stable under the action of $W$ on $V$ and that this action preserves the identifications.) 
We denote the action of $w\in W$ on $\overline{F}\in\sSigma$ by
\[
w\cdot \overline{F} := \overline{w\cdot F}.
\]
The action of $W$ on $\sCc$ is simply-transitive.

The faces of $\sSigma$ can be identified with cosets of the \emph{quasi-parabolic} subgroups 
\[
W_J := \br{ s_\alpha \mid \alpha \in J},
\] 
where $J$ is a proper subset of $\aDelta$. Namely,
\[ 
\sSigma \cong \{ wW_J \mid w \in W, J\subsetneq \aDelta\},
\] 
with inclusion of faces given by reverse inclusion of cosets. See \cite{DPS:2009}.

The action of the affine Weyl group $\aW$ on $\aSigma$ is color-preserving, and hence so is the action of the subgroup $\Z\Phi^{\vee}$.
Therefore, the complex $\sSigma$ inherits the balanced coloring of $\aSigma$, for which the faces of the form $F= w\cdot \overline{A_J}$ receive color set $\aDelta\setminus J$, which we again denote $\col(\overline{F})$.
See Figure~\ref{fig:colortori}.

The following is a consequence of Proposition~\ref{prp:affDcol}.

\begin{cor}\label{cor:sDcol}
Given a face $\overline{F}\in\sSigma$, there is a unique $w\in W$ and a unique proper subset $J$ of $\aDelta$ such that
\begin{equation}\label{e:sDcol}
\overline{F} = w\cdot \overline{A_J} \qand \aDes(w) \subseteq \col(\overline{F}).
\end{equation}
\end{cor}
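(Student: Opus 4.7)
The plan is to deduce this directly from Proposition~\ref{prp:affDcol} by lifting from the Steinberg torus back to the affine Coxeter complex. All of the heavy lifting—existence, uniqueness of $\mu$, $w$, $J$, the descent condition—has already been done there, so I just need to quotient by $\Z\Phi^\vee$ and check that nothing is spoiled in the process.

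For existence, I pick any representative $F\in\aSigma$ of the $\Z\Phi^\vee$-orbit $\overline{F}$. Proposition~\ref{prp:affDcol} supplies $\mu\in\Z\Phi^\vee$, $w\in W$, and a proper subset $J\subsetneq\aDelta$ with $F=\mu+w\cdot A_J$ and $\aDes(w)\subseteq\col(F)$. Passing to orbits, $\overline{F}=\overline{w\cdot A_J}=w\cdot\overline{A_J}$, the last equality being the definition of the $W$-action on $\sSigma$. As noted just before the corollary, the balanced coloring on $\aSigma$ descends to $\sSigma$ because $\Z\Phi^\vee$ acts by color-preserving translations; hence $\col(\overline{F})=\col(F)$, and the descent condition $\aDes(w)\subseteq\col(\overline{F})$ follows.

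For uniqueness, suppose $\overline{F}=w_1\cdot\overline{A_{J_1}}=w_2\cdot\overline{A_{J_2}}$ with $\aDes(w_i)\subseteq\col(\overline{F})$ for $i=1,2$. Fix a single lift $F\in\aSigma$; then each equality says that $F$ lies in the common $\Z\Phi^\vee$-orbit of $w_i\cdot A_{J_i}$, so there exist $\mu_i\in\Z\Phi^\vee$ with $F=\mu_i+w_i\cdot A_{J_i}$. Again by the equality $\col(F)=\col(\overline{F})$, the two triples $(\mu_1,w_1,J_1)$ and $(\mu_2,w_2,J_2)$ both meet the hypotheses of Proposition~\ref{prp:affDcol} for the single face $F$, and the uniqueness clause there forces $w_1=w_2$ and $J_1=J_2$ (in fact also $\mu_1=\mu_2$, though that part is not needed here).

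There is no real obstacle: the only subtlety worth flagging is that both the descent-set condition and the assertion $\overline{F}=w\cdot\overline{A_J}$ have to be transported through the quotient map $\aSigma\onto\sSigma$, and this is legitimate precisely because that map is $W$-equivariant and color-preserving—two facts that are explicitly recorded in the paragraph introducing the coloring of $\sSigma$ and in Corollary~\ref{cor:productinvariance}.
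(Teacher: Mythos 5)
Your proof is correct and follows exactly the route the paper intends: the paper states the corollary as a direct consequence of Proposition~\ref{prp:affDcol}, and your argument simply makes explicit the lifting to $\aSigma$, the color-preservation and $W$-equivariance of the quotient map, and the transfer of the uniqueness clause, which is precisely the implicit argument.
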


The element $w$ is determined by 
\begin{equation}\label{e:wF3}
w\cdot \overline{A_{\emptyset}} = \overline{F}C_{\emptyset}.
\end{equation}

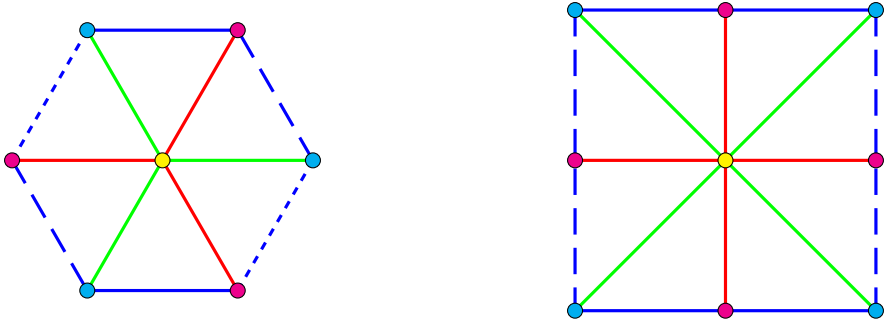
\begin{figure}[!h]
\begin{tikzpicture}[cm={1,0,.5,.8660254,(0,0)},baseline=0,scale=2]
\draw[blue,very thick] (-1,1)--(0,1);
\draw[blue,very thick] (1,-1)--(0,-1);
\draw[blue,very thick,dash pattern=on 10pt off 5pt] (-1,0)--(0,-1);
\draw[blue,very thick,dash pattern=on 10pt off 5pt] (1,0)--(0,1);
\draw[blue,very thick,dashed] (-1,0)--(-1,1);
\draw[blue,very thick,dashed] (1,-1)--(1,0);
\draw[green,very thick] (-1,1)--(0,0);
\draw[red,very thick] (0,0)--(1,-1);
\draw[red,very thick] (-1,0)--(0,0);
\draw[green,very thick] (0,0)--(1,0);
\draw[green,very thick] (0,-1)--(0,0);
\draw[red,very thick] (0,0)--(0,1);
\draw (-1,0) node[circle,inner sep=2pt,fill=magenta,draw=black] {};
\draw (1,0) node[circle,inner sep=2pt,fill=cyan,draw=black] {};
\draw (-1,1) node[circle,inner sep=2pt,fill=cyan,draw=black] {};
\draw (0,1) node[circle,inner sep=2pt,fill=magenta,draw=black] {};
\draw (0,-1) node[circle,inner sep=2pt,fill=cyan,draw=black] {};
\draw (1,-1) node[circle,inner sep=2pt,fill=magenta,draw=black] {};
\draw (0,0) node[circle,inner sep=2pt,fill=yellow,draw=black] {};
\end{tikzpicture}
\hspace{3cm}
\begin{tikzpicture}[scale=2,baseline=0]
\draw[blue,very thick,dash pattern=on 10pt off 5pt] (-1,1)--(-1,-1);
\draw[blue,very thick,dash pattern=on 10pt off 5pt] (1,1)--(1,-1);
\draw[blue,very thick] (-1,1)--(1,1);
\draw[blue,very thick] (-1,-1)--(1,-1);
\draw[green,very thick] (-1,1)--(1,-1);
\draw[green,very thick] (-1,-1)--(1,1);
\draw[red,very thick] (0,1)--(0,-1);
\draw[red,very thick] (-1,0)--(1,0);
\draw (-1,1) node[circle,inner sep=2pt,fill=cyan,draw=black] {};
\draw (1,1) node[circle,inner sep=2pt,fill=cyan,draw=black] {};
\draw (-1,0) node[circle,inner sep=2pt,fill=magenta,draw=black] {};
\draw (1,0) node[circle,inner sep=2pt,fill=magenta,draw=black] {};
\draw (-1,-1) node[circle,inner sep=2pt,fill=cyan,draw=black] {};
\draw (1,-1) node[circle,inner sep=2pt,fill=cyan,draw=black] {};
\draw (0,1) node[circle,inner sep=2pt,fill=magenta,draw=black] {};
\draw (0,-1) node[circle,inner sep=2pt,fill=magenta,draw=black] {};
\draw (0,0) node[circle,inner sep=2pt,fill=yellow,draw=black] {};
\end{tikzpicture}
\caption{The Steinberg tori for $A_2$ and $C_2$ with faces colored according to $W$-orbits.}
\label{fig:colortori}
\end{figure}

\section{Solomon's descent ring and the module of affine descent classes}\label{sec:modules}

We review Solomon's definition of the descent ring $\Sol(\Phi)$ and introduce an analogous object $\sSol(\Phi)$ in terms of Cellini's notion of affine descents. For the latter, the root system is assumed to be irreducible and crystallographic.

We then review the approach of Tits to the descent ring from~\cite{Tit:1976}, following
Bidigare~\cite{Bid:1997} and Brown~\cite[Section 4.8]{Bro:2000},
and adapt it to the case of affine descents. Via the constructions of Section~\ref{sec:products},
this leads to a left module structure on $\sSol(\Phi)$ over $\Sol(\Phi)$.

\subsection{Descents}\label{ss:des}

In this section, $\Phi$ is allowed to be an arbitrary root system.

The Coxeter group $W$ is generated by the set $S$ of simple reflections.
Let $\ell$ denote the corresponding length function.
According to~\cite[Proposition~4.4.6]{BjB:2005}, for any positive root $\beta$ 
and any $w\in W$,
\begin{equation}\label{e:des-equiv}
 \ell(w) > \ell(ws_{\beta})  \iff w(\beta) < 0.
 \end{equation}

For $w \in W$, the set of \emph{right descents} of $w$ is 
\begin{equation}\label{e:des} 
\Des(w) := \{\alpha\in\Delta \mid \ell(w) > \ell(ws_\alpha) \} = \{ \alpha \in \Delta \mid w(\alpha) < 0\}.
\end{equation} 

For any $J \subseteq \Delta$, let 
\[ 
x_J := \sum_{ \Des(w) \subseteq J} w 
\] 
denote the sum, in the group ring $\Z W$, of all elements of $W$ whose descent set is contained in $J$. As $J$ runs over the subsets of $\Delta$, the elements $x_J$ span a subring of $\Z W$, called \emph{Solomon's descent ring}~\cite[Theorem~1]{Sol:1976}. We denote it by $\Sol(\Phi)$. 
There is another standard basis of $\Sol(\Phi)$ given by the sums of elements with a fixed descent set, 
\[ 
y_J := \sum_{ \Des(w) = J} w.
\]

The descent algebra was introduced by Solomon in~\cite{Sol:1976} and has been
the object of many subsequent works including~\cite{ABN:2004,APVW:2002,BauHoh:2008,BBHT:1992,BonPfe:2008,Cel:1995,Ful:2001,GarReu:1989,MatOre:2008,Pat:1994,Sal:2008}. 

\subsection{Affine descents}\label{ss:affdes}

We turn to \emph{affine descent sets}, a notion introduced by Cellini \cite[Section 2]{Cel:1995}, and further studied in~\cite{DPS:2009,Ful:2000,Pet:2005}. 

We assume that root system $\Phi$ is irreducible and crystallographic,
as in Sections~\ref{ss:affine} and~\ref{ss:invariance}. Thus, $W$ is a Weyl group.
Let $\hr\in\Phi$ be the highest root  and $\alpha_0 = -\hr$ the \emph{lowest} root.
Let  and $\aDelta:=\Delta\cup \{\alpha_0\}$. The highest root is positive;
in particular, $\alpha_0\notin \Delta$. 
The \emph{affine descent set} of an element $w\in W$ is 
\begin{equation}\label{e:affdes}
\aDes(w) := \{ \alpha\in \aDelta \mid w(\alpha) < 0\}.
\end{equation} 
Thus, $\Des(w) \subseteq \aDes(w)$, and the only difference occurs when $w$ does not take $\alpha_0$ to a positive root.  Note that the set $\aDes(w)$ is defined only for elements $w$ of the finite Weyl group $W$, and not for general elements of the affine Weyl group $\aW$.

Every element has at least one affine descent, and no element can have more than $\abs{\Delta}$ affine descents.
For any proper nonempty subset $J$ of $\aDelta$, let 
\[ 
\Stb{x}_J := \sum_{\aDes(w) \subseteq J} w \qquad \mbox{and} \qquad \Stb{y}_J := \sum_{\aDes(w) = J} w.
\]
This is a refinement of the basis for Solomon's descent algebra in the sense that for $J \subseteq \Delta$, 
\[ 
y_J = \Stb{y}_J + \Stb{y}_{J\cup\{\alpha_0\}}. 
\] 
While the elements $\Stb{x}_J$ (or equivalently, $\Stb{y}_J$) do not span a subring of $\Z W$, we show below (Theorem~\ref{thm:main}) that they span a left module over $\Sol(\Phi)$, which we denote by $\sSol(\Phi)$. 

\begin{rmk}
Root systems with isomorphic Coxeter groups (such as $B_n$ and $C_n$)
have isomorphic descent rings. On the other hand, such systems may have
non-isomorphic modules of affine descents. The module $\sSol(\Phi)$ depends on the root system and not just on the Weyl group. 
\end{rmk}

\begin{rmk}\label{rmk:cell}
Cellini~\cite[Proposition 1.2]{Cel:1995} constructs a certain commutative subring of $\Z W$.
It follows from~\cite[Lemma~2.4]{Cel:1995} that this subring lies inside $\sSol(\Phi)$.
We do not pursue any further connections with Cellini's work in this paper. 
\end{rmk}

\begin{rmk}\label{rmk:mosz}
In~\cite{Mos:1989}, Moszkowski 
defines a family of modules over Solomon's descent ring. 
This family contains the module $\sSol(\Phi)$, as we now explain.

Let $T = \{ wsw^{-1} \mid s \in S, w \in W\}$ denote the set of \emph{reflections} in $W$. Fix a subset $R$ of $T$. For any element $w\in W$, define the $R$-descent set of $w$ to be: 
\begin{equation}\label{e:Rdes} 
D_R(w) := \{ r \in R \mid \ell(w) > \ell(wr) \}.
\end{equation} 
Given $J\subseteq R$, let 
\[ 
x_J^R := \sum_{D_R(w) \subseteq J} w \qquad \mbox{ and } \qquad y_J^R := \sum_{D_R(w) =J} w 
\] 
denote the sum of elements whose $R$-descent set is contained in, or, respectively, equal to $J$. Moszkowski shows that the subspace of $\Z W$ spanned by the elements $x_J^R$
(or equivalently, by the elements $y_J^R$),
as $J$ runs over the subsets of $R$, is a left module over $\Sol(\Phi)$~\cite[Th\'eor\`eme~1]{Mos:1989}. (Moszkowski, as Solomon, index the basis elements by the complement of the
subsets $J$ above.)


Now let $s_0$ denote the linear reflection through the linear hyperplane orthogonal to $\alpha_0$, i.e., the hyperplane $\{\lambda \in V \mid \br{\lambda, \alpha_0} = 0\}$, and let $R=S \cup \{ s_0\}$. Applying~\eqref{e:des-equiv} to $\hr$, we have that 
\[
\ell(w) > \ell(ws_0) \iff w(\alpha_0) > 0.
\] 
Comparing~\eqref{e:affdes} to \eqref{e:Rdes} we see that
\[ 
D_R(w) = \begin{cases} \aDes(w) \cup \{\alpha_0\} & \mbox{if } w(\alpha_0) > 0, \\
            \aDes(w) \setminus \{\alpha_0\} & \mbox{if } w(\alpha_0) < 0.
   \end{cases}
\]
It follows that, for $J \subseteq \Delta$, 
\[ 
y^R_J = \Stb{y}_{J\cup \{\alpha_0\}} \qquad \mbox{and} \qquad \Stb{y}_J = y^R_{J\cup 
\{\alpha_0\}}.
\] 
Thus, $\sSol(\Phi)$ is Moszkowski's module for $R=S\cup\{s_0\}$.
\end{rmk}

\subsection{The geometric approach to descents}\label{ss:geo-des}

We continue to assume that $\Phi$ is an irreducible crystallographic root system, $W$ is the associated Weyl group, and $\Sigma$, $\aSigma$ and $\sSigma$ are the complexes discussed in Section~\ref{sec:products}.

Let $\Z\Sigma$ denote the monoid ring of $\Sigma$ and consider the
subring $(\Z\Sigma)^W$ of $W$-invariants.
Tits~\cite{Tit:1976} showed that the latter
is anti-isomorphic to Solomon's descent ring; see also Bidigare~\cite{Bid:1997}.
We follow here the proof
of this fact by Brown~\cite[Section~9.6]{Bro:2000}, and obtain counterparts 
for $\aSigma$ and $\sSigma$.

As discussed in Section~\ref{sec:products}, the Tits product turns $\Sigma$ into a monoid,
and $\aSigma$ and $\sSigma$ are right $\Sigma$-modules, with the projection
$\pi: \aSigma \to \sSigma$ being a morphism of right $\Sigma$-modules.

The Weyl group $W$ acts on both $\Z\Phi^\vee$ and $\aSigma$, 
and these actions and the action of $\Z\Phi^\vee$ on $\aSigma$
are related by the \emph{semilinearity} condition
\begin{equation}\label{e:semilinear-coroot}
w\cdot(\mu + F) =  w\cdot \mu + w\cdot F
\end{equation}
for $w\in W$, $\mu\in\Z\Phi^\vee$, and $F\in\aSigma$. It follows that $W$ acts
on $\sSigma$ and that $\pi$ is a morphism of left $W$-modules.

The Weyl group $W$ also acts on the monoid $\Sigma$ and we have
\begin{equation}\label{e:semilinear-face}
w\cdot(FG) = (w\cdot F)(w\cdot G)
\end{equation}
for $w\in W$, $G$ in $\Sigma$ and $F$ in either $\Sigma$, $\aSigma$, or $\sSigma$.

We linearize the sets $\aSigma$ and $\sSigma$, obtaining (free) abelian groups $\Z\aSigma$
and $\Z\sSigma$. 
We emphasize that $\Z\aSigma$ consists of \emph{finite} linear
combinations of elements of $\aSigma$. For this reason, $0$ is the only element of 
$\Z\aSigma$ invariant under the linear action of the (infinite) group $\aW$. We consider the linear action of the finite Weyl group $W$
on the abelian groups $\Z\aSigma$ and $\Z\sSigma$, and
the corresponding subgroups of $W$-invariant elements.
It follows from~\eqref{e:semilinear-face}
that $(\Z\aSigma)^W$ and $(\Z \sSigma)^W$ are right modules over the ring $(\Z\Sigma)^W$, and also that
the map $\pi: \aSigma \to \sSigma$
gives rise to a morphism of right $(\Z\Sigma)^W$-modules
$\pi: (\Z\aSigma)^W \to (\Z \sSigma)^W$.

Recall again from Section~\ref{sec:products} that the set of chambers $\Cc$ is a left ideal of the monoid
$\Sigma$: the product 
of a face of $\Sigma$  and a chamber of $\Sigma$ is another chamber of $\Sigma$. Similarly, the right action of a chamber of $\Sigma$ on a face of $\aSigma$ (respectively, of $\sSigma$)
results in an alcove of $\aSigma$ (respectively, a maximal face of $\sSigma$).
This gives rise to three  maps
\begin{equation}\label{e:phi-maps}
\Z\Sigma \to \End_{\Z}(\Z\Cc) \qquad \Z\aSigma \to \Hom_{\Z}(\Z\Cc,\Z\aCc) \qquad
\Z\sSigma \to \Hom_{\Z}(\Z\Cc,\Z\sCc)
\end{equation}
denoted in every case by $\Theta$ and given by
\[
\Theta(F)(C) := FC
\]
(and extended by $\Z$-linearity). Here, $F$ denotes a face of either $\Sigma$, $\aSigma$, or $\sSigma$, according to the case, while $C$ denotes a chamber of $\Sigma$ in every case.

The abelian group $\End_{\Z}(\Z\Cc)$ is a ring under composition, 
while both $\Hom_{\Z}(\Z\Cc,\Z\aCc)$ and $\Hom_{\Z}(\Z\Cc,\Z\sCc)$
are right $\End_{\Z}(\Z\Cc)$-modules in the same manner.
Associativity for the product of $\Sigma$ (or for the right action of $\Sigma$ on $\aSigma$, or on $\sSigma$) translates into the fact that
$\Theta(FG) = \Theta(F)\circ\Theta(G)$
for $G\in\Sigma$ and $F$ in either $\Sigma$, $\aSigma$, or $\sSigma$.
This says that the first map in~\eqref{e:phi-maps} is a morphism of rings, while
the other two maps are morphisms of right $\Sigma$-modules, where
$\Hom_{\Z}(\Z\Cc,\Z\aCc)$ and $\Hom_{\Z}(\Z\Cc,\Z\sCc)$
are viewed as right $\Z\Sigma$-modules by restriction via $\Theta:\Z\Sigma \to \End_{\Z}(\Z\Cc)$.

The sets $\Cc$, $\aCc$, and $\sCc$ are stable under the action of $W$, and hence
the groups $\End_{\Z}(\Z\Cc)$, $\Hom_{\Z}(\Z\Cc,\Z\aCc)$, and $\Hom_{\Z}(\Z\Cc,
\Z\sCc)$
are acted upon by $W$ from the left.
The action is
\[
(w\cdot f)(C) = w\cdot f(w^{-1}\cdot C)
\]
for $w\in W$, $C\in\Cc$, and $f$ in either $\End_{\Z}(\Z\Cc)$, $\Hom_{\Z}(\Z\Cc,\Z\aCc)$, or
$\Hom_{\Z}(\Z\Cc,\Z\sCc)$.
Equation~\eqref{e:semilinear-face} implies that
\[
\Theta(w\cdot F) = w\cdot\Theta(F)
\]
for $w\in W$ and $F$ in either $\Sigma$, $\aSigma$, or $\sSigma$.
It follows that each map $\Theta$ restricts as follows:
\[
(\Z\Sigma)^W \to \End_{\Z}(\Z\Cc)^W,
\qquad
(\Z\aSigma)^W \to \Hom_{\Z}(\Z\Cc,\Z\aCc)^W,
\qquad
(\Z\sSigma)^W \to \Hom_{\Z}(\Z\Cc,\Z\sCc)^W.
\]
These maps are still denoted by $\Theta$. The first one is a morphism of rings
and the other two are morphisms of right $(\Z\Sigma)^W$-modules.


Since the action of $W$ on $\Cc$ is free and transitive,
we have isomorphims
\[
\End_{\Z}(\Z\Cc)^W = \End_{\Z W}(\Z\Cc) \cong \Z\Cc,
\quad
\Hom_{\Z}(\Z\Cc,\Z\aCc)^W = \Hom_{\Z W}(\Z\Cc,\Z\aCc) \cong \Z\aCc,
\]
\[
\Hom_{\Z}(\Z\Cc,\Z\sCc)^W = \Hom_{\Z W}(\Z\Cc,\Z\sCc) \cong \Z\sCc,
\]
given in every case by  $f \mapsto f(C_{\emptyset})$,
where $C_{\emptyset}$ is the fundamental chamber of $\Sigma$.

We may further identify $W$ with $\Cc$ by means of
$
w \leftrightarrow w\cdot C_{\emptyset},
$
where $C_{\emptyset}$ is the fundamental chamber of $\Sigma$. Consider the
composite isomorphism of abelian groups
\begin{equation}\label{e:iso-comp}
\End_{\Z W}(\Z\Cc) \cong \Z W.
\end{equation}
A group element $u\in W$ corresponds to the endomorphism $f$ such that
$f(C_{\emptyset}) = u\cdot C_{\emptyset}$.
If another element $v\in W$ corresponds to the endomorphism $g$, then
\[
(f\circ g)(C_{\emptyset}) = f(v\cdot C_{\emptyset}) = v\cdot f(C_{\emptyset}) =vu\cdot C_{\emptyset},
\]
so $f\circ g$ corresponds to $vu$. Therefore, the isomorphism of 
rings~\eqref{e:iso-comp} reverses products.

Similarly, we
have $\aCc\cong \aW$ and $\sCc\cong W$ via the actions of these groups on the
fundamental alcoves of $\aSigma$ and $\sSigma$.
This gives rise to isomorphisms of
right $\End_{\Z W}(\Z\Cc)$-modules 
\[
\Hom_{\Z W}(\Z\Cc,\Z\aCc) \cong \Z\aW
\qqand
\Hom_{\Z W}(\Z\Cc,\Z\sCc) \cong \Z W
\]
where now $\Z\aW$ and $\Z W$ are first viewed as left $\Z W$-modules
by multiplication, and then as right $\End_{\Z W}(\Z\Cc)$-modules via the
antimorphism~\eqref{e:iso-comp}.

Composing the maps $\Theta$ with the preceding isomorphisms 
we obtain three maps 
\begin{equation}\label{e:psi-map}
(\Z\Sigma)^W \to \Z W,
\qquad
(\Z\aSigma)^W \to \Z\aW,
\qquad
(\Z\sSigma)^W \to \Z W,
\end{equation}
denoted in every case by $\Psi$ and given by
\[
\Psi\left(\sum_F a_F\, F\right) = \sum_F a_F\, w_F,
\]
where in each case $\sum_F a_F\, F$ stands for a $W$-invariant element of
$\Z\Sigma$, $\Z\aSigma$, or $\Z\sSigma$, 
and $w_F$ is the element determined by~\eqref{e:wF}, \eqref{e:wF2} or \eqref{e:wF3}.
Note that in the second case $w_F\in\aW$ is an element of the affine Weyl group, while in the other cases it is an element of the finite Weyl group $W$.

The first map in~\eqref{e:psi-map} is an anti-morphism of rings
and the other two are morphisms of right $(\Z\Sigma)^W$-modules,
where $\Z\aW$ and $\Z W$ are first viewed as left $\Z W$-modules
by multiplication, and then as right $(\Z\Sigma)^W$-modules via the
antimorphism $\Psi:(\Z\Sigma)^W \to \Z W$.

Let us analyze the subgroup $(\Z\Sigma)^W$ of $W$-invariants. In the action of $W$ on $\Sigma$ there is one orbit for each subset $J$ of the set of simple roots $\Delta$;
namely, the orbit of the face $C_{\Delta\setminus J}$, which consists of all the faces of color set $J$. Let us denote it by $\Sigma_J$.
Then, the group $(\Z\Sigma)^W$ is freely generated by the elements
\[
\sigma_J := \sum_{F\in\Sigma_J} F,
\]
where $J$ runs over the subsets of $\Delta$. Define elements $x_J\in \Z W$ by 
\[
x_J := \sum_{ w \in W:\, \Des(w) \subseteq J} w. 
\] 
It follows from Proposition~\ref{prp:Dcol} that there is a bijection
\[
\{w\in W \mid \Des(w)\subseteq J\} \leftrightarrow \Sigma_J
\]
that sends $w$ to $w\cdot C_{\Delta\setminus J}$ and returns $F$ to $w_F$ as in \eqref{e:wF}.
Therefore, the map $\Psi:(\Z\Sigma)^W \to \Z W$ satisfies
\[
\Psi( \sigma_J) = x_{J}.
\]

Let 
\[
\Sol(\Phi) := \Z\{ x_J \mid J \subseteq \Delta\}
\]
denote the subgroup of $\Z W$ generated by the $x_J$, $J\subseteq\Delta$.
Thus, $\Sol(\Phi)$ is the image of $\Psi$. As the latter is an anti-morphism of rings,  
$\Sol(\Phi)$ a subring of $\Z W$. This is a result of Solomon~\cite[Theorem 1]{Sol:1976}.
In addition, as $J$ varies, the sets
$\{w\in W \mid \Des(w)= J\}$ 
are disjoint, and therefore the $x_J$ are linearly independent. It follows that the ring $\Sol(\Phi)$
is anti-isomorphic to $(\Z\Sigma)^W$ via
$\Psi$. This is a result of Bidigare~\cite{Bid:1997}; the approach followed above is due to Brown~\cite[Section 9.6]{Bro:2000}.

A similar analysis applies to $(\Z\aSigma)^W$ and $(\Z\sSigma)^W$.
Given a nonempty subset $J$ of $\aDelta$ and $\mu\in\Z\Phi^\vee$, let
$\aSigma_{J,\mu}$ denote the $W$-orbit of the face $\mu + A_{\aDelta\setminus J}$ in $\aSigma$, and let $\sSigma_J$ denotes the $W$-orbit of the face $\overline{A_{\aDelta\setminus J}}$ in $\sSigma$. The groups $(\Z\aSigma)^W$
and $(\Z\sSigma)^W$ are then freely generated by the elements
\[
\asigma_{J,\mu} := \sum_{F\in\aSigma_{J,\mu}} F
\qqand
\ssigma_J := \sum_{F\in\sSigma_J} F,
\]
respectively.

For $J$ and $\mu$ as above, define elements $\aff{x}_{J,\mu}\in\Z\aW$ and 
$\Stb{x}_J\in\Z W$ by
\[
\aff{x}_{J,\mu} := \sum_{w\in W:\, \aDes(w)\subseteq J} (\mu,w),\qqand
\Stb{x}_J := \sum_{w\in W:\, \aDes(w)\subseteq J} w.
\]
Invoking Proposition~\ref{prp:affDcol} and Corollary~\ref{cor:sDcol}, 
we obtain that the maps $\Psi:(\Z\aSigma)^W \to \Z\aW$ and
$\Psi:(\Z\sSigma)^W \to \Z W$ satisfy
\[
\Psi(\asigma_{J,\mu}) = \aff{x}_{J,\mu} \qqand
\Psi(\ssigma_J) = \Stb{x}_{J}.
\]
Since the sets $\{\aff{x}_{J,\mu}\}$
and $\{\Stb{x}_J\}$ are linearly independent, the map $\Psi$ is injective in every case.

Define 
\[ 
\aSol(\Phi) := \Z\{ \aff{x}_{J,\mu} \mid \emptyset\neq J \subseteq \aDelta,\, \mu \in \Z\Phi^\vee\} \qand 
\sSol(\Phi) := \Z\{ \Stb{x}_J \mid \emptyset\neq J \subseteq \aDelta\}.
\] 
The preceding shows that these groups are respectively isomorphic to $(\Z\aSigma)^W$ and
$(\Z\sSigma)^W$ via $\Psi$. Taking into account the multiplicative properties of the morphisms $\Psi$, we arrive at our main result.

\begin{thm}\label{thm:main}
Let $W$ be a Weyl group.
\begin{enumerate}[(i)]
\item $\aSol(\Phi)$ is  a left module over the ring $\Sol(\Phi)$. More precisely, it is a submodule of $\Z\aW$, where the latter is a left module over $\Z W$ by multiplication and then over the subring $\Sol(\Phi)$ by restriction.

\item $\sSol(\Phi)$ is  a left module over the ring $\Sol(\Phi)$. More precisely, it is a submodule of $\Z W$, where the latter is a left module over the subring $\Sol(\Phi)$ by multiplication.

\item The map $\Psi$ in its three versions
\[ 
(\Z\Sigma)^W \to \Sol(\Phi), \qquad (\Z\aSigma)^W \to \aSol(\Phi), \qqand (\Z\sSigma)^W \to \sSol(\Phi)
\]
constitutes an anti-isomorphism of rings (first version) and an isomorphism of right 
$(\Z\Sigma)^W$-modules (second and third versions), where $\aSol(\Phi)$ and $\sSol(\Phi)$ are viewed first as left 
$\Sol(\Phi)$-modules and then as right $(\Z\Sigma)^W$-modules via $\Psi:(\Z\Sigma)^W \to \Sol(\Phi)$.
\end{enumerate}
\end{thm}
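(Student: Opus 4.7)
The plan is to assemble the pieces that have already been proved in Section~\ref{ss:geo-des}: three injective $\Z$-linear maps $\Psi$, each with an explicitly identified image, together with their multiplicative and module-theoretic compatibilities. The theorem itself is little more than the observation that those pieces fit together.

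First I would verify that the images are as advertised. From the formulas $\Psi(\sigma_J)=x_J$, $\Psi(\asigma_{J,\mu})=\aff{x}_{J,\mu}$, and $\Psi(\ssigma_J)=\Stb{x}_J$ derived earlier via Propositions~\ref{prp:Dcol} and~\ref{prp:affDcol} and Corollary~\ref{cor:sDcol}, together with the fact that the $\sigma_J$ (respectively $\asigma_{J,\mu}$, $\ssigma_J$) form $\Z$-bases of $(\Z\Sigma)^W$ (respectively $(\Z\aSigma)^W$, $(\Z\sSigma)^W$), the images of the three maps are precisely $\Sol(\Phi)$, $\aSol(\Phi)$, $\sSol(\Phi)$. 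The $\Z$-linear independence of the target generating sets (which comes from partitioning $W$, respectively $\aW$, by descent data) gives injectivity of each $\Psi$. Combined with the ring anti-morphism property of the first $\Psi$, this yields an anti-isomorphism of rings $(\Z\Sigma)^W \cong \Sol(\Phi)$, and in particular establishes that $\Sol(\Phi)$ is a subring of $\Z W$.

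The substance of (i) and (ii) is to convert the right $(\Z\Sigma)^W$-actions on $\Z\aW$ and $\Z W$ — which were built in Section~\ref{ss:geo-des} from the antimorphism $\Psi$ by the rule $\alpha\cdot\tau := \Psi(\tau)\,\alpha$, with right-hand side interpreted as left multiplication in the respective group ring — into left $\Sol(\Phi)$-actions. Under this convention, a subset is a right $(\Z\Sigma)^W$-submodule if and only if it is stable under left multiplication by every element of $\Sol(\Phi)$. Since the second and third $\Psi$ are morphisms of right $(\Z\Sigma)^W$-modules with images $\aSol(\Phi)$ and $\sSol(\Phi)$, those images are stable under left $\Sol(\Phi)$-multiplication, which proves (i) and (ii). Part (iii) then follows immediately: injectivity together with the matching images and the compatibility properties translate, via the same convention, into the isomorphism statements asserted.

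There is no genuine obstacle and no routine calculation to grind through — the work has been done. The only point that demands care is the left/right bookkeeping mediated by the antimorphism $\Psi$; once the convention $\alpha\cdot\tau = \Psi(\tau)\,\alpha$ is fixed, every clause of the theorem translates directly from a fact already established before its statement.
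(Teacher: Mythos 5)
Your proposal is correct and follows essentially the same route as the paper: the theorem is assembled from the facts already established in Section~\ref{ss:geo-des} (the formulas $\Psi(\sigma_J)=x_J$, $\Psi(\asigma_{J,\mu})=\aff{x}_{J,\mu}$, $\Psi(\ssigma_J)=\Stb{x}_J$, the linear independence giving injectivity, and the ring-antimorphism and right-module properties of $\Psi$), with the left $\Sol(\Phi)$-actions obtained exactly as you describe, by viewing $\Z\aW$ and $\Z W$ as right $(\Z\Sigma)^W$-modules through the antimorphism. Your handling of the left/right bookkeeping matches the paper's convention, so nothing is missing.
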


\begin{ex}
We illustrate the isomorphism between the module of invariant faces of the Steinberg torus and the module of affine descents with an explicit computation.

Consider the root system $A_2$ (Figures~\ref{fig:A2-finite} and~\ref{fig:A2}). 
There are $3$ rays of color $\{\alpha_2\}$ in the finite Coxeter complex $\Sigma$. 
They constitute the $W$-orbit $\Sigma_{\{\alpha_2\}}$ of the ray $C_{\{\alpha_1\}}$, 
and thus $\sigma_{\{\alpha_2\}}$ is the sum of these $3$ rays.
In the Steinberg torus, there are $9$ edges in total, and $3$ of them constitute the $W$-orbit 
$\sSigma_{\{\alpha_1,\alpha_2\}}$ of the edge $A_{\{\alpha_0\}}$. Thus, $\Stb{\sigma}_{\{\alpha_1,\alpha_2\}}$ is the sum of these $3$ edges. See Figure~\ref{fig:ray-edge}.

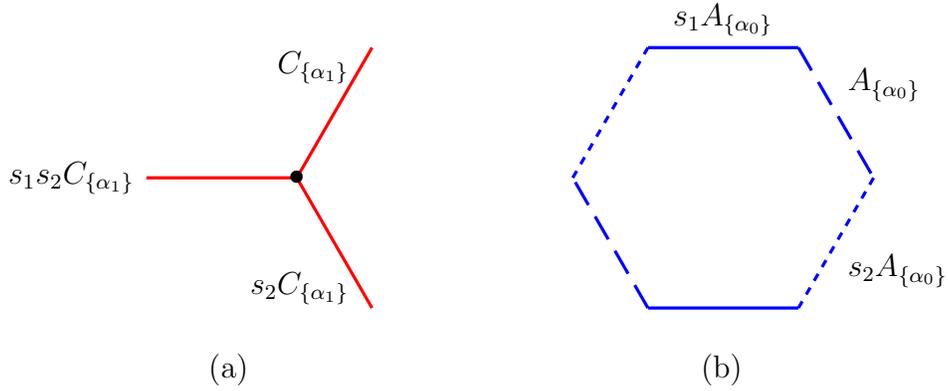
\begin{figure}[!h]
\begin{tabular}{c c}
\begin{tikzpicture}[cm={1,0,.5,.8660254,(0,0)},baseline=0]
\draw[very thick,draw=red] (-2,0) node[left] {$s_1s_2C_{\{\alpha_1\}}$} --(0,0) --node [pos=.85,left] {$C_{\{\alpha_1\}}$} (0,2);
\draw[very thick,draw=red] (0,0)-- node [pos=.85,left] {$s_2C_{\{\alpha_1\}}$} (2,-2);
\draw (0,0) node {$\bullet$};
\end{tikzpicture}
\hspace{1cm}
& 
\hspace{1cm}
\begin{tikzpicture}[cm={1,0,.5,.8660254,(0,0)},baseline=0]
\draw[dashed,draw=blue,very thick] (-2,0)  -- (-2,2);
\draw[draw=blue,very thick] (-2,2) -- node[midway,above] {$s_1A_{\{\alpha_0\}}$} (0,2) ;
\draw[dash pattern=on 10pt off 5pt,draw=blue,very thick] (0,2) -- node[midway,above right] {$A_{\{\alpha_0\}}$} (2,0);
\draw[dashed,draw=blue,very thick] (2,0) -- node[midway,below right] {$s_2A_{\{\alpha_0\}}$} (2,-2);
\draw[draw=blue,very thick] (2,-2) -- (0,-2);
\draw[dash pattern=on 10pt off 5pt,draw=blue,very thick] (0,-2) -- (-2,0);
\end{tikzpicture} \\
& \\
(a) & (b) \\
\end{tabular}
\caption{(a) The rays of color $\alpha_2$. (b) The edges of color $\{\alpha_1,\alpha_2\}$. }\label{fig:ray-edge}
\end{figure}

The product of one of these edges by each one of the three rays is computed in Figure~\ref{fig:ray-edge2}(a). The result is the edge itself plus the $2$ adjacent chambers. 
Performing this operation for each of the $3$ edges we obtain the $3$ edges back plus $6$ chambers. Modulo translations, the chambers tile the torus, as shown in Figure~\ref{fig:ray-edge2}(b). The $6$ chambers constitute the orbit $\Stb{\sigma}_{\{\alpha_0,\alpha_1,\alpha_2\}}$.
In conclusion,
\begin{equation}\label{eq:ray-edge}
\Stb{\sigma}_{\{\alpha_1,\alpha_2\}} \cdot \sigma_{\{\alpha_2\}} = \Stb{\sigma}_{\{\alpha_1,\alpha_2\}} + \Stb{\sigma}_{\{\alpha_0,\alpha_1,\alpha_2\}}.
\end{equation}

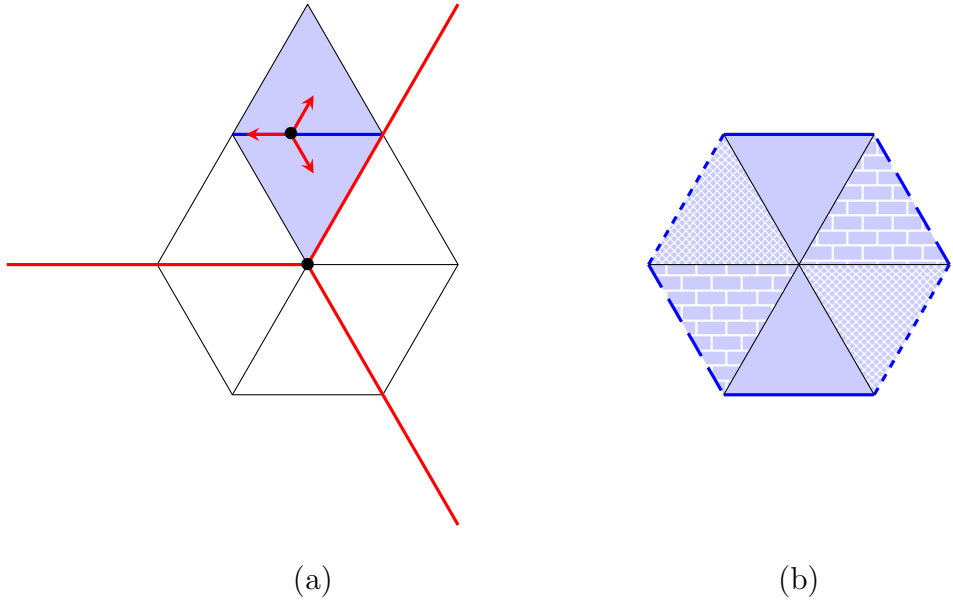
\begin{figure}[!h]
\begin{tabular}{c c}
\begin{tikzpicture}[cm={1,0,.5,.8660254,(0,0)},baseline=0,scale=2,>=stealth]
\draw[draw=none,fill=blue!20!white] (0,0)--(0,1)--(-1,2)--(-1,1)--(0,0);
\draw[very thick,draw=blue] (-1,1)--(0,1);
\draw (-1,2)--(1,0)--(1,-1)--(0,-1)--(-1,0)--(-1,2);
\draw (-1,1)--(0,0)--(0,-1);
\draw (0,0)--(1,0);
\draw[very thick,draw=red] (-2,0)--(0,0)--(0,2);
\draw[very thick,draw=red] (0,0) node {$\bullet$} -- (2,-2);
\draw[->,very thick,red] (-.61,1)--(-.61,1.3);
\draw[->,very thick,red] (-.61,1)--(-.91,1);
\draw[->,very thick,red] (-.61,1)--(-.31,.7);
\draw (-.61,1) node {$\bullet$};
\end{tikzpicture}
\hspace{2cm}
& 
\begin{tikzpicture}[cm={1,0,.5,.8660254,(0,0)},baseline=0,scale=2]
\draw[draw=none,fill=blue!20!white] (-1,0)--(-1,1)--(0,1)--(1,0)--(1,-1)--(0,-1)--(-1,0);
\draw[draw=none,pattern color=white,pattern=bricks] (0,0)--(0,1)--(1,0)--(0,0);
\draw[draw=none,pattern color=white,pattern=bricks] (0,0)--(0,-1)--(-1,0)--(0,0);
\draw[draw=none,pattern color=white,pattern=crosshatch] (0,0)--(-1,1)--(-1,0)--(0,0);
\draw[draw=none,pattern color=white,pattern=crosshatch] (0,0)--(1,0)--(1,-1)--(0,0);
\draw[draw=blue,very thick] (-1,1)--(0,1);
\draw[draw=blue,very thick] (1,-1)--(0,-1);
\draw[draw=blue,very thick,dash pattern=on 10pt off 5pt] (-1,0)--(0,-1);
\draw[draw=blue,very thick,dash pattern=on 10pt off 5pt] (1,0)--(0,1);
\draw[draw=blue,very thick,dashed] (-1,0)--(-1,1);
\draw[draw=blue,very thick,dashed] (1,-1)--(1,0);
\draw (-1,1)--(1,-1);
\draw (-1,0)--(1,0);
\draw (0,-1)--(0,1);
\end{tikzpicture}
\\
& \\
(a) & (b) \\
\end{tabular}
\caption{(a) Walking from an edge along $3$ rays. (b) $3$ edges and $6$ chambers. }\label{fig:ray-edge2}
\end{figure}

We turn to the Weyl group.
Let $s_1$ and $s_2$ denote the reflections corresponding to the simple roots $\alpha_1$ and $\alpha_2$. The affine descent set $\aDes(w)$ of each group element $w\in W$ is shown below. The descent set is determined by $\Des(w) = \aDes(w)\setminus\{\alpha_0\}$.

\begin{center}
\begin{tabular}{r|l}
$w$ & $\aDes(w)$    \\
\hline
$\id$ & $\alpha_0$ \\
$s_1$ & $\alpha_0, \alpha_1$ \\
$s_2$ & $\alpha_0, \alpha_2$ \\
$s_1s_2$ & $\alpha_2$ \\
$s_2s_1$ & $\alpha_1$ \\
$s_1s_2s_1$ & $\alpha_1, \alpha_2$
\end{tabular}
\end{center}

It follows that
\[
x_{\{\alpha_2\}} = \id + s_2 + s_1s_2
\qqand
\Stb{x}_{\{\alpha_1,\alpha_2\}} = s_1s_2 + s_2s_1 + s_1s_2s_1.
\]
Multiplying out in the Weyl group ring we find that
\begin{multline}\label{eq:ray-edge2}
x_{\{\alpha_2\}} \cdot \Stb{x}_{\{\alpha_1,\alpha_2\}} = 
s_1s_2 + s_2s_1 + s_1s_2s_1 \\
+ \id + s_1 + s_2 + s_1s_2 + s_2s_1 + s_1s_2s_1
= \Stb{x}_{\{\alpha_1,\alpha_2\}} + \Stb{x}_{\{\alpha_0,\alpha_1,\alpha_2\}}.
\end{multline}

A comparison of \eqref{eq:ray-edge} and \eqref{eq:ray-edge2} witnesses the isomorphism
$(\Z\sSigma)^W\cong \sSol(\Phi)$.
\end{ex}

\section{Combinatorial models}\label{sec:models}

We analyze the constructions of the preceding sections for the case of the root systems of types $A$ and $C$.  We review the combinatorial description of the finite Coxeter complex of type $A$ in terms of set compositions and provide a similar model for the Steinberg torus in terms of \emph{spin necklaces}. We describe the module structure of the latter in these terms.
We also discuss the situation for the root systems of type $C$, more briefly.

Let $\{\varepsilon_1,\dots,\varepsilon_n\}$ denote the canonical basis of $\R^n$.
We equip this space with the standard inner product, for which the $\varepsilon_i$ are orthonormal.

\subsection{The root system $A_{n-1}$}\label{ss:rootA}

The root system in question is
\[
A_{n-1} := \{\varepsilon_i-\varepsilon_j \mid 1\leq i\neq j\leq n\}.
\]
For the ambient space we take $V_n:= \{x \in \R^n \mid \sum x_i = 0\}$, the subspace spanned by $A_{n-1}$.

The Weyl group of $A_{n-1}$ may be identified with $S_n$,
the symmetric group of permutations of $[n]$, via its action on the canonical basis of $\R^n$.
The reflection associated to the root $\varepsilon_i-\varepsilon_j$ exchanges $\varepsilon_i$ for $\varepsilon_j$; it thus
corresponds to the transposition $(i,j)$. See Figure~\ref{fig:ij}.

\begin{figure}[!h]
\[
\begin{tikzpicture}
\draw (0,0) node (0) {$\bullet$};
\draw[->,very thick] (0,0)--(0,1) node[above] {$\varepsilon_i$}; 
\draw[->,very thick] (0,0)--(1,0) node[right] {$\varepsilon_j$}; 
\draw[->,very thick] (0,0)--(-1,1) node[above] {$\varepsilon_i-\varepsilon_j$}; 
\draw[very thick] (-1,-1)--(1,1) node[above right] {$H_{ij}$};
\end{tikzpicture}
\]
\caption{A root of type $A$.}
\label{fig:ij}
\end{figure}
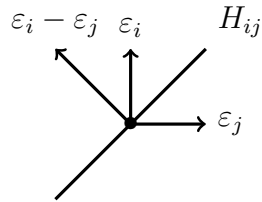

For the simple roots, we choose 
\[
\alpha_i := \varepsilon_{i+1}-\varepsilon_i,
\]
with $1\leq i\leq n-1$. Let $s_i$ denote the associated simple reflection.
It corresponds to the elementary transposition $(i,i+1)$.
The positive roots are then $\varepsilon_j-\varepsilon_i$ for $i<j$,
and the lowest root $\alpha_0=-\hr$ is $\varepsilon_1-\varepsilon_n$. See Figure~\ref{fig:A2-finite}
for an illustration.

We let $w$ denote both an element of the Weyl group and the corresponding permutation.
We write permutations in one-line form $w=w_1w_2\cdots w_n$,
where $w_i=w(i)$, and adopt the convention that other subscripts are taken modulo $n$, so that $w_0=w_n$ and $w_{n+1}=w_1$.
In these terms, we have that $w(\alpha_i)=\varepsilon_{w_{i+1}}-\varepsilon_{w_i}$, and therefore 
\[
w(\alpha_i) < 0 \iff w_{i+1}<w_i,
\]
for $i=0,\ldots,n-1$. We identify $\Delta$ with $\{1,\ldots,n-1\}$ and $\aDelta$ with 
$\{1,\ldots,n-1,n\}$ via 
\[
\alpha_0 \leftrightarrow n \qand \alpha_i\leftrightarrow i
\] 
for $i=1,\ldots,n-1$. Then, the descent sets (ordinary and affine) of $w$ are
 \[
 \Des(w) = \{i \mid 1\leq i\leq n-1,\,w_i>w_{i+1}\}
 \qand
 \aDes(w) = \{i \mid 1\leq i\leq n,\,w_i>w_{i+1}\}.
 \]  
For example, $\Des(25413)=\{2,3\}$ and $\aDes(25413)=\{2,3,5\}$.

The hyperplane orthogonal to $\varepsilon_i-\varepsilon_j$ is $H_{ij} := \{ x \in V_n \mid x_i = x_j \}$.
The Coxeter arrangement 
\[ 
\Hy(A_{n-1}) = \{ H_{ij} \mid 1\leq i < j \leq n\}
\]
is the \emph{braid arrangement}. The affine Coxeter arrangement is 
\[ 
\aff{\Hy}(A_{n-1}) = \{ H_{ij,k} \mid 1\leq i < j \leq n, k \in \Z \},
\]
where $H_{ij,k} = \{ x \in V_n \mid x_j - x_i = k \}$. 

For any root $\alpha\in A_{n-1}$, we have $\br{ \alpha,\alpha} =2$. Therefore,
$\Phi^\vee=\Phi$. The coroot lattice is thus spanned by the (simple) roots, and we have
$\Z\Phi^\vee = \{x \in \Z^n \mid \sum x_i = 0\}$.

\subsection{Faces of the finite Coxeter complex of type $A$}\label{ss:CCA}

A \emph{composition} of a set $I$ is a sequence $(S_1,\ldots,S_k)$ of disjoint nonempty subsets of $I$ such that
$S_1\cup\cdots\cup S_k=I$. Each $S_i$ is a \emph{block} of the composition.
We represent such a composition by means of a string as follows
\[
\begin{tikzpicture}
\tikzstyle{state1}=[ellipse,fill=white,inner sep=1,draw]
\draw (0,0) node[state1] {$S_1$} -- (1,0) node[state1] {$S_2$} -- (2,0) node[fill=white] {$\cdots$} -- (3,0) node[state1] {$S_k$};
\end{tikzpicture}.
\]

The faces of the Coxeter complex $\Sigma(A_{n-1})$ are in correspondence with compositions of the set $[n]$. A composition corresponds to the face determined by the (in)equalities
\begin{align*}
x_i=x_j & \text{ if $i$ and $j$ belong to the same block,}\\
x_i<x_j & \text{ if the block of $i$ precedes that of $j$.}
\end{align*}
The faces in $\Sigma(A_2)$, labeled with set compositions, are shown in Figure \ref{fig:A2Coxeter}. The picture shows the lines in $\Hy(A_{2})$ drawn in $V_3 \cong \R^2$.

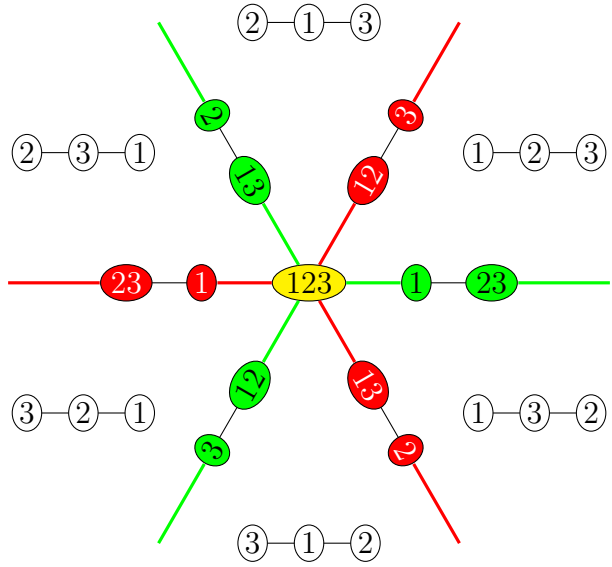
\begin{figure}
\begin{tikzpicture}[cm={1,0,.5,.8660254,(0,0)},baseline=0,scale=2]
\draw (0,1) node[inner sep=0,rotate=60] (12) 
{
\begin{tikzpicture}
\tikzstyle{state1}=[ellipse,fill=red,inner sep=1,draw]
\draw (0,0) node[state1] {\color{white}{12}} -- (1,0) node[state1] {\color{white}{3}};
\end{tikzpicture}
};
\draw (1,0) node[inner sep=0] (1) 
{
\begin{tikzpicture}
\tikzstyle{state1}=[ellipse,fill=green,inner sep=1,draw]
\draw (0,0) node[state1] {1} -- (1,0) node[state1] {23};
\end{tikzpicture}
};
\draw (1,-1) node[inner sep=0,rotate=-60] (13) 
{
\begin{tikzpicture}
\tikzstyle{state1}=[ellipse,fill=red,inner sep=1,draw]
\draw (0,0) node[state1] {\color{white}{13}} -- (1,0) node[state1] {\color{white}{2}};
\end{tikzpicture}
};
\draw (0,-1) node[inner sep=0,rotate=60] (3) 
{
\begin{tikzpicture}
\tikzstyle{state1}=[ellipse,fill=green,inner sep=1,draw]
\draw (0,0) node[state1] {3} -- (1,0) node[state1] {12};
\end{tikzpicture}
};
\draw (-1,0) node[inner sep=0] (23) 
{
\begin{tikzpicture}
\tikzstyle{state1}=[ellipse,fill=red,inner sep=1,draw]
\draw (0,0) node[state1] {\color{white}{23}} -- (1,0) node[state1] {\color{white}{1}};
\end{tikzpicture}
};
\draw (-1,1) node[inner sep=0,rotate=-60] (2) 
{
\begin{tikzpicture}
\tikzstyle{state1}=[ellipse,fill=green,inner sep=1,draw]
\draw (0,0) node[state1] {2} -- (1,0) node[state1] {13};
\end{tikzpicture}
};
\draw (0,0) node[ellipse,inner sep=1,draw=black,fill=yellow] (0) {123};
\draw (1,1) node 
{
\begin{tikzpicture}[scale=.75]
\tikzstyle{state1}=[ellipse,fill=white,inner sep=1,draw]
\draw (0,0) node[state1] {1} -- (1,0) node[state1] {2} -- (2,0) node[state1] {3};
\end{tikzpicture}
};
\draw (2,-1) node 
{
\begin{tikzpicture}[scale=.75]
\tikzstyle{state1}=[ellipse,fill=white,inner sep=1,draw]
\draw (0,0) node[state1] {1} -- (1,0) node[state1] {3} -- (2,0) node[state1] {2};
\end{tikzpicture}
};
\draw (1,-2) node 
{
\begin{tikzpicture}[scale=.75]
\tikzstyle{state1}=[ellipse,fill=white,inner sep=1,draw]
\draw (0,0) node[state1] {3} -- (1,0) node[state1] {1} -- (2,0) node[state1] {2};
\end{tikzpicture}
};
\draw (-1,-1) node 
{
\begin{tikzpicture}[scale=.75]
\tikzstyle{state1}=[ellipse,fill=white,inner sep=1,draw]
\draw (0,0) node[state1] {3} -- (1,0) node[state1] {2} -- (2,0) node[state1] {1};
\end{tikzpicture}
};
\draw (-2,1) node 
{
\begin{tikzpicture}[scale=.75]
\tikzstyle{state1}=[ellipse,fill=white,inner sep=1,draw]
\draw (0,0) node[state1] {2} -- (1,0) node[state1] {3} -- (2,0) node[state1] {1};
\end{tikzpicture}
};
\draw (-1,2) node 
{
\begin{tikzpicture}[scale=.75]
\tikzstyle{state1}=[ellipse,fill=white,inner sep=1,draw]
\draw (0,0) node[state1] {2} -- (1,0) node[state1] {1} -- (2,0) node[state1] {3};
\end{tikzpicture}
};
\draw[green,very thick] (-2,2)--(2)--(0)--(1)--(2,0);
\draw[green,very thick] (0,-2)--(3)--(0);
\draw[red,very thick] (-2,0)--(23)--(0)--(12)--(0,2);
\draw[red,very thick] (2,-2)--(13)--(0);
\end{tikzpicture}
\caption{The faces in $\Sigma(A_2)$, with colors corresponding to $W$-orbits.}
\label{fig:A2Coxeter}
\end{figure}

The partial order on faces (given by inclusion of closures)
corresponds to refinement of set compositions.
The finest set compositions are those for which all blocks are singletons.
The chambers thus correspond to linear orders on $[n]$, which in turn are identified with permutations of $[n]$.

With the preceding choices, the fundamental chamber corresponds to the identity permutation,
and the action of the Weyl group $W(A_{n-1})$ on chambers corresponds to the action of $S_n$ on itself by left multiplication. More generally, the action of the Weyl group on faces translates as follows: if the face $F$ corresponds to the composition $(S_1,\ldots,S_k)$, then
$w\cdot F$ corresponds to $\bigl(w(S_1),\ldots,w(S_k)\bigr)$. 
The color set of $F$ is 
\[
\col(F)=\{a_1,a_1+a_2,\ldots,a_1+a_2+\cdots+a_{k-1}\}\subseteq\{1,\ldots,n-1\},
\]
where $a_i:=\abs{S_i}$. 
The faces of color $J$ constitute the $W$-orbit $\Sigma_J$ and correspond to the set compositions with block sizes prescribed by $J$.
Figure \ref{fig:A2Coxeter} shows the orbits in $\Sigma(A_2)$. For example, the rays in red constitute the orbit of color set $\{2\}$, corresponding to the set compositions with block sizes $(2,1)$.

The permutation $w_F$ associated to the face $F$ as in~\eqref{e:wF} (or as in Proposition~\ref{prp:Dcol}) is obtained by listing the blocks in the given order, and writing the
elements in each block in increasing order. For example, if $F\in\Sigma(A_4)$ corresponds to
the composition
$
\begin{tikzpicture}[baseline=-2pt]
\tikzstyle{state1}=[ellipse,fill=white,inner sep=1,draw]
\draw (0,0) node[state1] {$134$} -- (1,0) node[state1] {$5$} -- (2,0) node[state1] {$2$};
\end{tikzpicture}
$, 
then $w_F=13452\in S_5$.


The sign vector of a face $F$ has entries
\begin{equation}\label{eq:signF}
\sigma_{ij}(F) = 
\begin{cases}
0 & \text{if $i$ and $j$ are in the same block,}\\
+ & \text{if the block of $i$ precedes that of $j$,}\\
- & \text{if the block of $i$ succeeds that of $j$.}
\end{cases}
\end{equation}

\subsection{Faces of the Steinberg torus of type $A$}\label{ss:STA}

Let $I$ be a finite set. A \emph{spin necklace on $I$} consists of
a partition of the set $I$ (into disjoint nonempty blocks), a cyclic order
on the set of blocks, and a labeling of the edges of the cycle with integers from 1 to $n=|I|$ satisfying the following condition. Let $B$ be a block of the partition and $i$ and $j$ the labels of the edges of the cycle incident to $B$, with $i$ coming before $j$ when we read the labels according to the cyclic order. 
Then
\begin{equation}\label{e:cyc-lab}
j\equiv i+\abs{B} \mod n.
\end{equation}

In our pictures, the cyclic order is always represented clockwise.
For instance, the following is a spin necklace on the set $[6]$:
\begin{equation}\label{e:spin1} 
\begin{tikzpicture}[node distance=.5cm,baseline=0]
\tikzstyle{state}=[ellipse,draw=black, inner sep=1pt]
\node (a) {};
\node[state] (b) [below left=of a,yshift=10pt] {$46$};
\node[state] (c) [above=of a] {$135$};
\node[state] (e) [below right=of a,yshift=10pt] {$2$};
\path (b) edge[bend left=20] node[midway,left] {\footnotesize $4$} (c);
\path (c) edge[bend left=20] node[midway,right] {\footnotesize $1$} (e);
\path (e) edge[bend left=20] node[midway,below] {\footnotesize $2$} (b);
\end{tikzpicture}
\end{equation}
The following is a spin necklace which differs from the previous one only in the labeling:
\begin{equation}\label{e:spin2}
\begin{tikzpicture}[node distance=.5cm,baseline=0]
\tikzstyle{state}=[ellipse,draw=black, inner sep=1pt]
\node (a) {};
\node[state] (b) [below left=of a,yshift=10pt] {$46$};
\node[state] (c) [above=of a] {$135$};
\node[state] (e) [below right=of a,yshift=10pt] {$2$};
\path (b) edge[bend left=20] node[midway,left] {\footnotesize $3$} (c);
\path (c) edge[bend left=20] node[midway,right] {\footnotesize $6$} (e);
\path (e) edge[bend left=20] node[midway,below] {\footnotesize $1$} (b);
\end{tikzpicture}
\end{equation}

Given a partition and a cyclic order on its blocks, a spin necklace is uniquely determined by the choice of one edge label (the remaining labels being determined by \eqref{e:cyc-lab}), and this choice is arbitrary. Note also that all edge labels in a spin necklace are distinct modulo $n$. This is because as we traverse the cycle from one edge to another, the labels increase by the total size of the intermediate blocks, which is an integer strictly between $0$ and $n$.

A spin necklace has a distinguished block, the \emph{clasp}, defined as follows. Choose  the edge labels from $\{1,\ldots,n\}$. Since the edge labels increase cyclically, the maximum label occurs immediately before the minimum label somewhere along the necklace. This intermediate block is the clasp of the necklace. In example \eqref{e:spin1}, the clasp is the block $\{1,3,5\}$. In example \eqref{e:spin2}, the clasp is $\{2\}$. The clasp may occur at a block of any size.

Given a spin necklace, we can contract any edge (and remove its label) to obtain 
another spin necklace. If the edge connected blocks $B$ and $C$, then in the new spin necklace there is one new block $B\cup C$. Note that condition~\eqref{e:cyc-lab} is verified by the new necklace. For example, the spin necklace on the right below is obtained by contracting one edge from the one on the left.
\[ 
\begin{tikzpicture}[node distance=.5cm,baseline=0]
\tikzstyle{state}=[ellipse,draw=black, inner sep=1pt]
\node (a) {};
\node[state] (b) [below left=of a,yshift=10pt] {$46$};
\node[state] (c) [above=of a] {$135$};
\node[state] (e) [below right=of a,yshift=10pt] {$2$};
\path (b) edge[bend left=20] node[midway,left] {\footnotesize $3$} (c);
\path (c) edge[bend left=20] node[midway,right] {\footnotesize $6$} (e);
\path (e) edge[bend left=20] node[midway,below] {\footnotesize $1$} (b);
\end{tikzpicture}
\qquad
\begin{tikzpicture}[node distance=.3cm,baseline=-7]
\tikzstyle{state}=[ellipse,draw=black, inner sep=1pt]
\node (a) {};
\node[state] (b) [below=of a] {$246$};
\node[state] (c) [above=of a] {$135$};
\path (b) edge[bend left=40] node[midway,left] {\footnotesize $3$} (c);
\path (c) edge[bend left=40] node[midway,right] {\footnotesize $6$} (b);
\end{tikzpicture}
\]

There is a partial order on the set of spin necklaces on $I$ in which
a spin necklace precedes another if the former can be obtained from the latter by contracting some of its edges.
This is such that given a spin necklace, the elements below it
form a poset isomorphic to the poset of nonempty subsets of its edge set.
The minimal elements of this partial order are the spin necklaces with one block; there are $n$ of them. For example, when $n=4$ they are:
\[
\begin{tikzpicture}
\tikzstyle{state}=[ellipse,draw=black, inner sep=1pt]
\draw (0,0) node[state] (a) {$1234$};
\draw (a) .. controls (-1,1) and (1,1) .. node[midway, above] {\footnotesize $1$} (a);
\end{tikzpicture}
\begin{tikzpicture}
\tikzstyle{state}=[ellipse,draw=black, inner sep=1pt]
\draw (0,0) node[state] (a) {$1234$};
\draw (a) .. controls (-1,1) and (1,1) .. node[midway, above] {\footnotesize $2$} (a);
\end{tikzpicture}
\begin{tikzpicture}
\tikzstyle{state}=[ellipse,draw=black, inner sep=1pt]
\draw (0,0) node[state] (a) {$1234$};
\draw (a) .. controls (-1,1) and (1,1) .. node[midway, above] {\footnotesize $3$} (a);
\end{tikzpicture}
\begin{tikzpicture}
\tikzstyle{state}=[ellipse,draw=black, inner sep=1pt]
\draw (0,0) node[state] (a) {$1234$};
\draw (a) .. controls (-1,1) and (1,1) .. node[midway, above] {\footnotesize $4$} (a);
\end{tikzpicture}.
\]
The maximal elements are the spin necklaces for which each block is a singleton.
There are then $n$ edges and each integer from 1 to $n$ occurs as a label.
If we read the elements in the blocks starting at the clasp and proceeding cyclically,  we obtain a bijection between the set of maximal elements and permutations of $[n]$.
\[
\begin{tikzpicture}[node distance=.7cm,baseline=0pt]
\tikzstyle{state}=[ellipse,draw=black,inner sep=1pt]
\node (a) {};
\node[state] (b) [right=of a,xshift=10pt] {$5$};
\node[state] (c) [below right=of a] {$2$};
\node[state] (d) [below left=of a] {$6$};
\node[state] (e) [left=of a,xshift=-10pt] {$4$};
\node[state] (f) [above left=of a] {$1$};
\node[state] (g) [above right=of a] {$3$};
\path (b) edge[bend left=20] node[midway,xshift=-1pt,yshift=-2pt,right] {\footnotesize $6$} (c);
\path (c) edge[bend left=20] node[midway,below] {\footnotesize $1$} (d);
\path (d) edge[bend left=20] node[midway,xshift=1pt,yshift=-2pt,left] {\footnotesize $2$} (e);
\path (e) edge[bend left=20] node[midway,xshift=1pt,yshift=2pt,left] {\footnotesize $3$} (f);
\path (f) edge[bend left=20] node[midway,above] {\footnotesize $4$} (g);
\path (g) edge[bend left=20] node[midway,xshift=-1pt,yshift=2pt,right] {\footnotesize $5$} (b);
\end{tikzpicture}
\quad\leftrightarrow\quad
264135
\]

The faces of the Steinberg torus $\sSigma(A_{n-1})$ are in correspondence with spin necklaces on the set $[n]$, by means of the following procedure.

Given a spin necklace, first locate its clasp $C$.
Let $\ell$ and $r$ be the incoming and outgoing labels, respectively (with respect to the cyclic order).  Then, by~\eqref{e:cyc-lab}, we have 
\[
\ell+\abs{C}=r+n.
\] 
Split $C$ into two blocks $C_1$ and $C_2$, consisting respectively of the first $n-\ell$ elements of $C$ and of the last $r$ elements.
(The elements of $C$ are ordered by the standard order of $[n]$.)
Note that $C_1$ may be empty (since $l$ may be $n$), but $C_2$ may not. Schematically,
\[ 
\begin{tikzpicture}[node distance=.5cm,baseline=0pt]
\tikzstyle{state}=[ellipse,draw=black, inner sep=1pt]
\node (a) {};
\node[state] (b) [left=of a] {$L$};
\node[state] (c) [above=of a] {$C_1\mid C_2$};
\node[state] (d) [right=of a] {$R$};
\path (b) edge[bend left=20] node[midway,left] {\footnotesize $\ell$} (c);
\path (c) edge[bend left=20] node[midway,right] {\footnotesize $r$} (d);
\path[dashed] (d) edge[bend left=70]  (b);
\end{tikzpicture}
\,.
\] 
We then turn the necklace into a string by listing the blocks in the given cyclic order, starting with $C_2$ and ending with $C_1$ (and dropping the edge labels):
\[
\begin{tikzpicture}
\tikzstyle{state1}=[ellipse,fill=white,inner sep=1,draw]
\draw (0,0) node[state1] {$C_2$} -- (1,0) node[state1] {$R$} -- (2,0) node[fill=white] {$\cdots$} -- (3,0) node[state1] {$L$} -- (4,0) node[state1] {$C_1$};
\end{tikzpicture}.
\]
We refer to this list as the \emph{split necklace}.
If $C_1$ is nonempty, the split necklace is a composition of $[n]$. If $C_1$ is empty, we keep track of this event by displaying the list as follows:
 \[
\begin{tikzpicture}
\tikzstyle{state1}=[ellipse,fill=white,inner sep=1,draw]
\draw (0,0) node[state1] {$C_2$} -- (1,0) node[state1] {$R$} -- (2,0) node[fill=white] {$\cdots$} -- (3,0) node[state1] {$L$} -- (4,0) node[fill=white] {};
\end{tikzpicture}.
\]
Note that the spin necklace can be reconstructed from the split necklace.

The given spin necklace corresponds to the $\Z\Phi^\vee$-orbit of the affine face determined by the (in)equalities
\begin{align*}
x_i &=x_j \text{ if $i$ and $j$ belong to the same block,}\\
x_i &<x_j \text{ if the block of $i$ precedes that of $j$,}\\
x_i &=x_j+1 \text{ if $i$ belongs to $C_1$ and $j$ to $C_2$,}\\
x_i &<x_j+1 \text{ if $C_1=\emptyset$, $i$ belongs to $L$ and $j$ to $C_2$.}
\end{align*}
The blocks in question and their relative order are those of the split necklace.

For the spin necklaces~\eqref{e:spin1} and~\eqref{e:spin2}, 
the split necklaces are
\[
\begin{tikzpicture}
\tikzstyle{state1}=[ellipse,fill=white,inner sep=1,draw]
\draw (0,0) node[state1] {$5$} -- (1,0) node[state1] {$2$} -- (2.1,0) node[state1] {$46$} -- (3.3,0) node[state1] {$13$};
\end{tikzpicture}
\qqand
\begin{tikzpicture}
\tikzstyle{state1}=[ellipse,fill=white,inner sep=1,draw]
\draw (0,0) node[state1] {$2$} -- (1,0) node[state1] {$46$} -- (2.1,0) node[state1] {$135$} -- (3.4,0) node[fill=white] {};
\end{tikzpicture}
,
\]
and the conditions defining the faces are
\[
x_5<x_2<x_4=x_6<x_1=x_3=x_5+1
\]
and
\[
x_2<x_4=x_6<x_1=x_3=x_5<x_2+1,
\]
respectively.

The faces in $\sSigma(A_2)$, labeled with spin necklaces, are shown in Figure \ref{fig:A2Steinberg}.

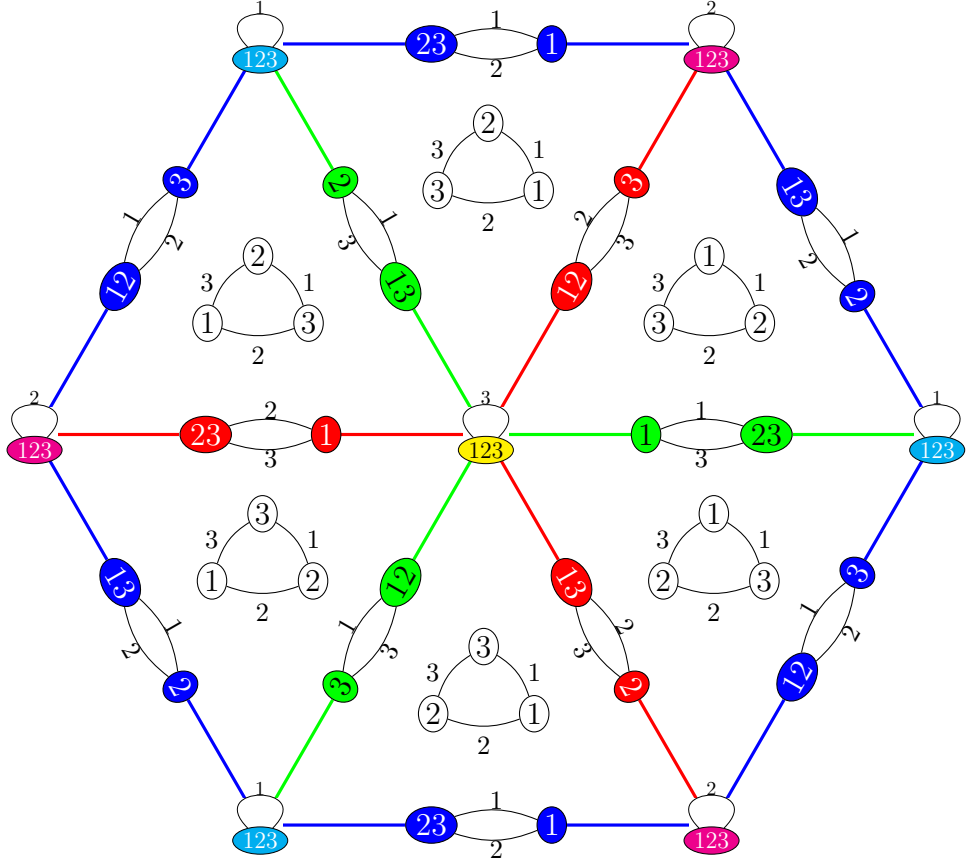
\begin{figure}[!ht]
\begin{tikzpicture}[cm={1,0,.5,.8660254,(0,0)},baseline=0,scale=6]
\draw[blue,very thick] (-1,1)-- node[midway,inner sep=0,fill=white] 
{
\begin{tikzpicture}[node distance=.5cm]
\tikzstyle{state}=[ellipse,fill=blue,draw=black,inner sep=1pt]
\node (a) {};
\node[state] (b) [left=of a] {\color{white}{$23$}};
\node[state] (c) [right=of a] {\color{white}{$1$}};
\path[black] (b) edge[bend left=20] node[midway,above] {\footnotesize $1$} (c);
\path[black] (c) edge[bend left=20] node[midway,below] {\footnotesize $2$} (b);
\end{tikzpicture}
}
(0,1);
\draw[blue,very thick] (1,-1)-- 
node[midway,inner sep=0,fill=white] 
{
\begin{tikzpicture}[node distance=.5cm]
\tikzstyle{state}=[ellipse,fill=blue,draw=black,inner sep=1pt]
\node (a) {};
\node[state] (b) [left=of a] {\color{white}{$23$}};
\node[state] (c) [right=of a] {\color{white}{$1$}};
\path[black] (b) edge[bend left=20] node[midway,above] {\footnotesize $1$} (c);
\path[black] (c) edge[bend left=20] node[midway,below] {\footnotesize $2$} (b);
\end{tikzpicture}
}
(0,-1);
\draw[blue,very thick] (-1,0)--
node[midway,inner sep=0,fill=white,rotate=-60] 
{
\begin{tikzpicture}[node distance=.5cm]
\tikzstyle{state}=[ellipse,fill=blue,draw=black,inner sep=1pt]
\node (a) {};
\node[state] (b) [left=of a] {\color{white}{$13$}};
\node[state] (c) [right=of a] {\color{white}{$2$}};
\path[black] (b) edge[bend left=20] node[midway,above] {\footnotesize $1$} (c);
\path[black] (c) edge[bend left=20] node[midway,below] {\footnotesize $2$} (b);
\end{tikzpicture}
}
(0,-1);
\draw[blue,very thick] (1,0)-- node[midway,inner sep=0,fill=white,rotate=-60] 
{
\begin{tikzpicture}[node distance=.5cm]
\tikzstyle{state}=[ellipse,fill=blue,draw=black,inner sep=1pt]
\node (a) {};
\node[state] (b) [left=of a] {\color{white}{$13$}};
\node[state] (c) [right=of a] {\color{white}{$2$}};
\path[black] (b) edge[bend left=20] node[midway,above] {\footnotesize $1$} (c);
\path[black] (c) edge[bend left=20] node[midway,below] {\footnotesize $2$} (b);
\end{tikzpicture}
}
(0,1);
\draw[blue,very thick] (-1,0)--
node[midway,inner sep=0,fill=white,rotate=60] 
{
\begin{tikzpicture}[node distance=.5cm]
\tikzstyle{state}=[ellipse,fill=blue,draw=black,inner sep=1pt]
\node (a) {};
\node[state] (b) [left=of a] {\color{white}{$12$}};
\node[state] (c) [right=of a] {\color{white}{$3$}};
\path[black] (b) edge[bend left=20] node[midway,above] {\footnotesize $1$} (c);
\path[black] (c) edge[bend left=20] node[midway,below] {\footnotesize $2$} (b);
\end{tikzpicture}
}
(-1,1);
\draw[blue,very thick] (1,-1)--
node[midway,inner sep=0,fill=white,rotate=60] 
{
\begin{tikzpicture}[node distance=.5cm]
\tikzstyle{state}=[ellipse,fill=blue,draw=black,inner sep=1pt]
\node (a) {};
\node[state] (b) [left=of a] {\color{white}{$12$}};
\node[state] (c) [right=of a] {\color{white}{$3$}};
\path[black] (b) edge[bend left=20] node[midway,above] {\footnotesize $1$} (c);
\path[black] (c) edge[bend left=20] node[midway,below] {\footnotesize $2$} (b);
\end{tikzpicture}
}
(1,0);
\draw[green,very thick] (-1,1)--
node[midway,inner sep=0,fill=white,rotate=-60] 
{
\begin{tikzpicture}[node distance=.5cm,black]
\tikzstyle{state}=[ellipse,fill=green,draw,inner sep=1pt]
\node (a) {};
\node[state] (b) [left=of a] {$2$};
\node[state] (c) [right=of a] {$13$};
\path (b) edge[bend left=20] node[midway,above] {\footnotesize $1$} (c);
\path (c) edge[bend left=20] node[midway,below] {\footnotesize $3$} (b);
\end{tikzpicture}
}
(0,0);
\draw[red,very thick] (0,0)--
node[midway,inner sep=0,fill=white,rotate=-60] 
{
\begin{tikzpicture}[node distance=.5cm,black]
\tikzstyle{state}=[ellipse,fill=red,draw,inner sep=1pt]
\node (a) {};
\node[state] (b) [left=of a] {\color{white}{$13$}};
\node[state] (c) [right=of a] {\color{white}{$2$}};
\path (b) edge[bend left=20] node[midway,above] {\footnotesize $2$} (c);
\path (c) edge[bend left=20] node[midway,below] {\footnotesize $3$} (b);
\end{tikzpicture}
}
(1,-1);
\draw[red,very thick] (-1,0)--
node[midway,inner sep=0,fill=white] 
{
\begin{tikzpicture}[node distance=.5cm,black]
\tikzstyle{state}=[ellipse,fill=red,draw,inner sep=1pt]
\node (a) {};
\node[state] (b) [left=of a] {\color{white}{$23$}};
\node[state] (c) [right=of a] {\color{white}{$1$}};
\path (b) edge[bend left=20] node[midway,above] {\footnotesize $2$} (c);
\path (c) edge[bend left=20] node[midway,below] {\footnotesize $3$} (b);
\end{tikzpicture}
}
(0,0);
\draw[green,very thick] (0,0)--
node[midway,inner sep=0,fill=white] 
{
\begin{tikzpicture}[node distance=.5cm,black]
\tikzstyle{state}=[ellipse,fill=green,draw,inner sep=1pt]
\node (a) {};
\node[state] (b) [left=of a] {$1$};
\node[state] (c) [right=of a] {$23$};
\path (b) edge[bend left=20] node[midway,above] {\footnotesize $1$} (c);
\path (c) edge[bend left=20] node[midway,below] {\footnotesize $3$} (b);
\end{tikzpicture}
}
(1,0);
\draw[green,very thick] (0,-1)--
node[midway,inner sep=0,fill=white,rotate=60] 
{
\begin{tikzpicture}[node distance=.5cm,black]
\tikzstyle{state}=[ellipse,draw,fill=green,inner sep=1pt]
\node (a) {};
\node[state] (b) [left=of a] {$3$};
\node[state] (c) [right=of a] {$12$};
\path (b) edge[bend left=20] node[midway,above] {\footnotesize $1$} (c);
\path (c) edge[bend left=20] node[midway,below] {\footnotesize $3$} (b);
\end{tikzpicture}
}
(0,0);
\draw[red,very thick] (0,0)--
node[midway,inner sep=0,fill=white,rotate=60] 
{
\begin{tikzpicture}[node distance=.5cm,black]
\tikzstyle{state}=[ellipse,draw,fill=red,inner sep=1pt]
\node (a) {};
\node[state] (b) [left=of a] {\color{white}{$12$}};
\node[state] (c) [right=of a] {\color{white}{$3$}};
\path (b) edge[bend left=20] node[midway,above] {\footnotesize $2$} (c);
\path (c) edge[bend left=20] node[midway,below] {\footnotesize $3$} (b);
\end{tikzpicture}
}
(0,1);
\draw (.33,.33) node 
{
\begin{tikzpicture}[node distance=.5cm]
\tikzstyle{state}=[ellipse,draw=black, inner sep=1pt]
\node (a) {};
\node[state] (b) [left=of a,xshift=5pt] {$3$};
\node[state] (c) [above=of a] {$1$};
\node[state] (e) [right=of a,xshift=-5pt] {$2$};
\path (b) edge[bend left=20] node[midway,left] {\footnotesize $3$} (c);
\path (c) edge[bend left=20] node[midway,right] {\footnotesize $1$} (e);
\path (e) edge[bend left=20] node[midway,below] {\footnotesize $2$} (b);
\end{tikzpicture}
};
\draw (.67,-.33) node 
{
\begin{tikzpicture}[node distance=.5cm]
\tikzstyle{state}=[ellipse,draw=black, inner sep=1pt]
\node (a) {};
\node[state] (b) [left=of a,xshift=5pt] {$2$};
\node[state] (c) [above=of a] {$1$};
\node[state] (e) [right=of a,xshift=-5pt] {$3$};
\path (b) edge[bend left=20] node[midway,left] {\footnotesize $3$} (c);
\path (c) edge[bend left=20] node[midway,right] {\footnotesize $1$} (e);
\path (e) edge[bend left=20] node[midway,below] {\footnotesize $2$} (b);
\end{tikzpicture}
};
\draw (.33,-.67) node 
{
\begin{tikzpicture}[node distance=.5cm]
\tikzstyle{state}=[ellipse,draw=black, inner sep=1pt]
\node (a) {};
\node[state] (b) [left=of a,xshift=5pt] {$2$};
\node[state] (c) [above=of a] {$3$};
\node[state] (e) [right=of a,xshift=-5pt] {$1$};
\path (b) edge[bend left=20] node[midway,left] {\footnotesize $3$} (c);
\path (c) edge[bend left=20] node[midway,right] {\footnotesize $1$} (e);
\path (e) edge[bend left=20] node[midway,below] {\footnotesize $2$} (b);
\end{tikzpicture}
};
\draw (-.33,-.33) node 
{
\begin{tikzpicture}[node distance=.5cm]
\tikzstyle{state}=[ellipse,draw=black, inner sep=1pt]
\node (a) {};
\node[state] (b) [left=of a,xshift=5pt] {$1$};
\node[state] (c) [above=of a] {$3$};
\node[state] (e) [right=of a,xshift=-5pt] {$2$};
\path (b) edge[bend left=20] node[midway,left] {\footnotesize $3$} (c);
\path (c) edge[bend left=20] node[midway,right] {\footnotesize $1$} (e);
\path (e) edge[bend left=20] node[midway,below] {\footnotesize $2$} (b);
\end{tikzpicture}
};
\draw (-.67,.33) node 
{
\begin{tikzpicture}[node distance=.5cm]
\tikzstyle{state}=[ellipse,draw=black, inner sep=1pt]
\node (a) {};
\node[state] (b) [left=of a,xshift=5pt] {$1$};
\node[state] (c) [above=of a] {$2$};
\node[state] (e) [right=of a,xshift=-5pt] {$3$};
\path (b) edge[bend left=20] node[midway,left] {\footnotesize $3$} (c);
\path (c) edge[bend left=20] node[midway,right] {\footnotesize $1$} (e);
\path (e) edge[bend left=20] node[midway,below] {\footnotesize $2$} (b);
\end{tikzpicture}
};
\draw (-.33,.67) node 
{
\begin{tikzpicture}[node distance=.5cm]
\tikzstyle{state}=[ellipse,draw=black, inner sep=1pt]
\node (a) {};
\node[state] (b) [left=of a,xshift=5pt] {$3$};
\node[state] (c) [above=of a] {$2$};
\node[state] (e) [right=of a,xshift=-5pt] {$1$};
\path (b) edge[bend left=20] node[midway,left] {\footnotesize $3$} (c);
\path (c) edge[bend left=20] node[midway,right] {\footnotesize $1$} (e);
\path (e) edge[bend left=20] node[midway,below] {\footnotesize $2$} (b);
\end{tikzpicture}
};
\draw (0,0) node[fill=white,inner sep=0,scale=.75] {
\begin{tikzpicture}
\useasboundingbox (-.4,-.2) rectangle (.4,.75);
\tikzstyle{state}=[ellipse,draw=black,fill=yellow,inner sep=1pt]
\draw (0,1) node {};
\draw (0,0) node[state] (a) {$123$};
\draw (a) .. controls (-1,1) and (1,1) .. node[midway, above] {\footnotesize $3$} (a);
\end{tikzpicture}
};
\draw (0,1) node[fill=white,inner sep=0,scale=.75] {
\begin{tikzpicture}
\useasboundingbox (-.4,-.2) rectangle (.4,.75);
\tikzstyle{state}=[ellipse,draw=black,fill=magenta,inner sep=1pt]
\draw (0,1) node {};
\draw (0,0) node[state] (a) {\color{white}{123}};
\draw (a) .. controls (-1,1) and (1,1) .. node[midway, above] {\footnotesize $2$} (a);
\end{tikzpicture}
};
\draw (-1,0) node[fill=white,inner sep=0,scale=.75] {
\begin{tikzpicture}
\useasboundingbox (-.4,-.2) rectangle (.4,.75);
\tikzstyle{state}=[ellipse,draw=black,fill=magenta,inner sep=1pt]
\draw (0,1) node {};
\draw (0,0) node[state] (a) {\color{white}{123}};
\draw (a) .. controls (-1,1) and (1,1) .. node[midway, above] {\footnotesize $2$} (a);
\end{tikzpicture}
};
\draw (1,-1) node[fill=white,inner sep=0,scale=.75] {
\begin{tikzpicture}
\useasboundingbox (-.4,-.2) rectangle (.4,.75);
\tikzstyle{state}=[ellipse,draw=black,fill=magenta,inner sep=1pt]
\draw (0,0) node[state] (a) {\color{white}{123}};
\draw (a) .. controls (-1,1) and (1,1) .. node[midway, above] {\footnotesize $2$} (a);
\end{tikzpicture}
};
\draw (1,0) node[fill=white,inner sep=0,scale=.75] {
\begin{tikzpicture}
\useasboundingbox (-.4,-.2) rectangle (.4,.75);
\tikzstyle{state}=[ellipse,draw=black,fill=cyan,inner sep=1pt]
\draw (0,0) node[state] (a) {\color{white}{123}};
\draw (a) .. controls (-1,1) and (1,1) .. node[midway, above] {\footnotesize $1$} (a);
\end{tikzpicture}
};
\draw  (0,-1) node[fill=white,inner sep=0,scale=.75] {
\begin{tikzpicture}
\useasboundingbox (-.4,-.2) rectangle (.4,.75);
\tikzstyle{state}=[ellipse,draw=black, fill=cyan,inner sep=1pt]
\draw (0,0) node[state] (a) {\color{white}{123}};
\draw (a) .. controls (-1,1) and (1,1) .. node[midway, above] {\footnotesize $1$} (a);
\end{tikzpicture}
};
\draw (-1,1) node[fill=white,inner sep=0,scale=.75] {
\begin{tikzpicture}
\useasboundingbox (-.4,-.2) rectangle (.4,.75);
\tikzstyle{state}=[ellipse,draw=black,fill=cyan,inner sep=1pt]
\draw (0,0) node[state] (a) {\color{white}{123}};
\draw (a) .. controls (-1,1) and (1,1) .. node[midway, above] {\footnotesize $1$} (a);
\end{tikzpicture}
};
\end{tikzpicture}
\caption{The faces of the Steinberg torus $\sSigma(A_2)$, with colors corresponding to $W$-orbits. Note the identifications along the boundary.}
\label{fig:A2Steinberg}
\end{figure}

The partial order on faces (given by inclusion of closures) corresponds to the partial order on spin necklaces given by edge contraction. On the other hand, sliding consecutive beads (blocks) past each other in the necklace corresponds to walking between adjacent chambers.

A permutation $w$ acts on a spin necklace by changing each block $B$ into $w(B)$, and keeping the edge labels. This corresponds to 
the action of the Weyl group on faces of the torus, and the set of edge labels of the necklace corresponds to the color set of the face (under the identification between $\aDelta$ and 
$\{1,\ldots,n\}$.)
The orbits are thus parametrized by nonempty subsets of $\{1,\ldots,n\}$, with the orbit
$\sSigma_J$ consisting of the spin necklaces with edge label set $J$.
Figure \ref{fig:A2Steinberg} shows the orbits in $\sSigma(A_2)$. For example, the edges in red constitute the orbit of color set $\{2,3\}$.

The permutation associated to the torus face as in~\eqref{e:wF3} (or as in Corollary~\ref{cor:sDcol}) is obtained by listing the blocks in the split necklace from left to right, and writing the
elements in each block in increasing order. For example, the permutations associated 
to the faces (spin necklaces)~\eqref{e:spin1} and~\eqref{e:spin2} are
$524613$ and $246135$, respectively. 

Recall an affine face $F$ has both an \emph{expanded} and a \emph{compact} sign vector (see Equations \eqref{eq:expanded} and \eqref{eq:compact}). For each pair $i < j$, there is a critical value $k = k_{ij}(F)$ (see \eqref{e:signj}), and the compact sign vector consists of the values $k_{ij}(F)$ along with the signs relative to the corresponding hyperplanes, which are either $+$ or $0$. In terms of spin necklaces these signs are simple:
\begin{equation}\label{eq:signFbar}
\sigma_{ij,k}(F) = \begin{cases} 0 &\mbox{ if $i$ and $j$ are in the same block,} \\
								+ &\mbox{ if $i$ and $j$ are in different blocks.} \\
								\end{cases}
\end{equation}

\subsection{Products of faces in type $A$}\label{ss:prodA}

We turn to a combinatorial description for the right module structure of 
$\sSigma(A_{n-1})$ over $\Sigma(A_{n-1})$.

First, recall that the product of two faces in the finite Coxeter complex admits 
the following description.  Let $F$ and $G\in\Sigma(A_{n-1})$ be the faces corresponding to the compositions
$
\begin{tikzpicture}[baseline=-2.5pt]
\tikzstyle{state1}=[ellipse,fill=white,inner sep=1,draw]
\draw (0,0) node[state1] {$S_1$} -- (1,0) node[fill=white,inner sep=1] {$\cdots$} -- (2,0) node[state1] {$S_p$};
\end{tikzpicture}
$
and 
$ 
\begin{tikzpicture}[baseline=-2.5pt]
\tikzstyle{state1}=[ellipse,fill=white,inner sep=1,draw]
\draw (0,0) node[state1] {$T_1$} -- (1,0) node[fill=white,inner sep=1] {$\cdots$} -- (2,0) node[state1] {$T_q$};
\end{tikzpicture}
$ 
of $[n]$. Then the Tits product of $FG\in\Sigma(A_{n-1})$ corresponds to the composition of $[n]$ whose blocks are the pairwise intersections $S_i \cap T_j$, arranged lexicographically: 
\[ 
\begin{tikzpicture}[baseline=-2.5pt,xscale=1.75]
\tikzstyle{state1}=[ellipse,fill=white,inner sep=1,draw]
\draw (0,0) node[state1] {$S_1\cap T_1$} -- (1,0) node[fill=white,inner sep=1] {$\cdots$} -- (2,0) node[state1] {$S_1\cap T_q$} -- (3,0) node[fill=white,inner sep=1] {$\cdots$} -- (4,0) node[state1] {$S_p\cap T_1$} -- (5,0) node[fill=white,inner sep=1] {$\cdots$} -- (6,0) node[state1] {$S_p\cap T_q$};
\end{tikzpicture}
, 
\] 
with the understanding that empty intersections are removed from the string.

For example, 
\[ 
\begin{tikzpicture}[baseline=-2.5pt]
\tikzstyle{state1}=[ellipse,fill=white,inner sep=1,draw]
\draw (0,0) node[state1] {$3567$} -- (1,0) node[state1] {$4$} -- (2,0) node[state1] {$12$};
\end{tikzpicture}
\bdot
\begin{tikzpicture}[baseline=-2.5pt]
\tikzstyle{state1}=[ellipse,fill=white,inner sep=1,draw]
\draw (0,0) node[state1] {$26$} -- (1,0) node[state1] {$35$} -- (2,0) node[state1] {$17$} -- (3,0) node[state1] {$4$};
\end{tikzpicture}
=
\begin{tikzpicture}[baseline=-2.5pt]
\tikzstyle{state1}=[ellipse,fill=white,inner sep=1,draw]
\draw (0,0) node[state1] {$6$} -- (1,0) node[state1] {$35$} -- (2,0) node[state1] {$7$} -- (3,0) node[state1] {$4$} -- (4,0) node[state1] {$2$} -- (5,0) node[state1] {$1$};
\end{tikzpicture}
\,.
\]

The agreement between the geometric definition of the Tits product and the combinatorial procedure is illustrated in Figure~\ref{fig:prodA2}.

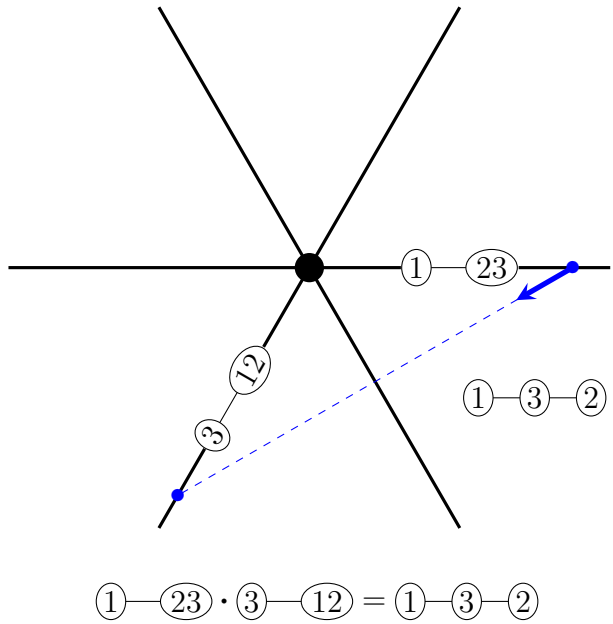
\begin{figure}[!ht]
\begin{tikzpicture}[cm={1,0,.5,.8660254,(0,0)},baseline=0,scale=2]
\draw (1,0) node[inner sep=0] (1) 
{
\begin{tikzpicture}
\tikzstyle{state1}=[ellipse,fill=white,inner sep=1,draw]
\draw (0,0) node[state1] {1} -- (1,0) node[state1] {23};
\end{tikzpicture}
};
\draw (0,-1) node[inner sep=0,rotate=60] (3) 
{
\begin{tikzpicture}
\tikzstyle{state1}=[ellipse,fill=white,inner sep=1,draw]
\draw (0,0) node[state1] {3} -- (1,0) node[state1] {12};
\end{tikzpicture}
};
\draw (2,-1) node 
{
\begin{tikzpicture}[scale=.75]
\tikzstyle{state1}=[ellipse,fill=white,inner sep=1,draw]
\draw (0,0) node[state1] {1} -- (1,0) node[state1] {3} -- (2,0) node[state1] {2};
\end{tikzpicture}
};
\draw[very thick] (-2,2)--(0,0);
\draw[very thick] (0,-2)--(3)--(0,0)--(1)--(2,0);
\draw[very thick] (-2,0)--(0,0)--(0,2);
\draw[very thick] (2,-2)--(0,0);
\draw (0,0) node[circle,fill] (0) {};
\draw[blue,dashed] (1.75,0) node {$\bullet$} -- (0,-1.75) node {$\bullet$};
\draw[blue,->,>=stealth,line width=2] (1.75,0)--(1.5,-.25);
\end{tikzpicture}
\vspace{.5cm}
\[
\begin{tikzpicture}[baseline=-.1cm]
\tikzstyle{state1}=[ellipse,fill=white,inner sep=1,draw]
\draw (0,0) node[state1] {1} -- (1,0) node[state1] {23};
\end{tikzpicture}
\bdot
\begin{tikzpicture}[baseline=-.1cm]
\tikzstyle{state1}=[ellipse,fill=white,inner sep=1,draw]
\draw (0,0) node[state1] {3} -- (1,0) node[state1] {12};
\end{tikzpicture}
=
\begin{tikzpicture}[baseline=-.1cm,scale=.75]
\tikzstyle{state1}=[ellipse,fill=white,inner sep=1,draw]
\draw (0,0) node[state1] {1} -- (1,0) node[state1] {3} -- (2,0) node[state1] {2};
\end{tikzpicture}
\]
\caption{The product of two faces of $\Sigma(A_2)$.}
\label{fig:prodA2}
\end{figure}

The product of a face of the Steinberg torus by a face of the finite Coxeter complex admits a similar description.

\begin{proposition}\label{p:prodSCA}
Let $F\in\sSigma(A_{n-1})$ be a face of the Steinberg torus and 
$G\in\Sigma(A_{n-1})$ a face of the Coxeter complex. Let
$(S_1,\ldots,S_k)$ be the composition of $[n]$ corresponding to $G$.
To obtain the spin necklace on $[n]$
corresponding to the face $FG\in\sSigma(A_{n-1})$, replace each block $B$ in the
 the spin necklace of $F$ by the string of intersections $(B\cap S_1,\ldots,B\cap S_k)$, removing those that are empty. The incoming edge to $B$ is now incoming to $B\cap S_1$, and the outgoing edge from $B$ is now outgoing from $B\cap S_k$. The label of the intermediate edges is uniquely determined by~\eqref{e:cyc-lab}.
\end{proposition}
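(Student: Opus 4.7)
The plan is to compute the Tits product using an affine representative of $F$ and then translate the result back into spin necklaces. By Corollary~\ref{cor:productinvariance}, $FG$ equals the $\Z\Phi^\vee$-orbit of $\widetilde F\cdot G$ for any lift $\widetilde F\in\aSigma(A_{n-1})$ of $F$, and I would take $\widetilde F$ to be the mundane face cut out by the inequalities attached to the split necklace of $F$ in Section~\ref{ss:STA}. Viewing $G\in\Sigma(A_{n-1})$ as a celestial face of $\aff{\Hy}(A_{n-1})$, the sign $\sigma_{\varepsilon_j-\varepsilon_i,k}(G)$ at an affine hyperplane $H_{\varepsilon_j-\varepsilon_i,k}$ is independent of the stripe index $k$ and given by the block order of the composition of $G$ via \eqref{eq:signF}. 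Plugging this, together with the sign vector of $\widetilde F$, into \eqref{eq:product} produces the sign vector of $\widetilde F\cdot G$.

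The sign vector of $\widetilde F$ has a simple combinatorial form: for each pair $i<j$, the sign $\sigma_{ij,k}(\widetilde F)$ vanishes at exactly one stripe $k$, namely $k=0$ if $i,j$ lie in the same block of the spin necklace of $F$, and $k=-1$ if $i\in C_1$ and $j\in C_2$; in every other stripe the sign is strictly $\pm$ and is therefore inherited by $\widetilde F\cdot G$. Hence the integer stripe of $x_j-x_i$ is unchanged whenever $i,j$ lie in different blocks of the spin necklace of $F$, and so the cyclic order and edge labels between those blocks are preserved. At the critical stripe where $\sigma_{ij,k}(\widetilde F)=0$, formula \eqref{eq:product} replaces $0$ by $\sigma_{ij}(G)$: $i$ and $j$ remain in a common block of $\widetilde F\cdot G$ precisely when they lie in the same part $S_\ell$ of $G$, and otherwise the sub-block $B\cap S_a$ containing $i$ precedes the one containing $j$ in the cyclic order iff $a$ is smaller than the index of the part containing $j$. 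This is exactly the refinement $B\mapsto(B\cap S_1,\dots,B\cap S_k)$ asserted by the proposition; the incoming and outgoing edge labels of $B$ are inherited by the first and last surviving sub-blocks, and the intermediate labels are then forced by \eqref{e:cyc-lab}.

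The main technical obstacle is the clasp. In the spin necklace of $F$, $C$ is a single block, whereas in the split necklace of $\widetilde F$ it is displayed as $C_2$ at the start and $C_1$ at the end, joined by the integer jump $x_{C_1}=x_{C_2}+1$. The case analysis of the previous paragraph must therefore treat pairs $i\in C_1,\,j\in C_2$ on the same footing as pairs $i,j$ lying together in a non-clasp block, because both kinds of pair give $\sigma_{ij,\bullet}(\widetilde F)=0$ at exactly one stripe. After refining $C$ by $(C\cap S_1,\dots,C\cap S_k)$, the sub-blocks may be distributed across both ends of the split necklace of $\widetilde F\cdot G$, and the new clasp—selected by the maximum/minimum edge-label convention—is in general a different sub-block of $C$. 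Checking that the resulting spin necklace coincides with the direct refinement described in the statement amounts to matching these two splits, and this is the part of the proof requiring the most care.
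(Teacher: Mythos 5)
Your proposal is correct and follows essentially the same route as the paper, whose entire proof is the one-sentence remark that the result follows by computing the Tits product on sign vectors via \eqref{eq:product} together with the sign-vector dictionaries \eqref{eq:signF} and \eqref{eq:signFbar}. You in fact supply more detail than the paper does, and you correctly isolate the one point the paper glosses over entirely: the affine representative of $FG$ produced by the product must be renormalized by a coroot-lattice translation before its clasp and split necklace line up with the locally refined necklace in the statement.
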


This proposition can be proved by considering the effect of the Tits product
on sign vectors (Equation \eqref{eq:product}) and employing \eqref{eq:signF} and \eqref{eq:signFbar} for translating set compositions and spin necklaces into sign vectors.

Schematically, if $F$ and $G$ correspond respectively to
\[
\begin{tikzpicture}[node distance=.5cm,baseline=0pt]
\tikzstyle{state}=[ellipse,draw=black, inner sep=1pt]
\node (a) {};
\node[state] (b) [left=of a] {\phantom{$B$}};
\node[state] (c) [above=of a] {$B$};
\node[state] (d) [right=of a] {\phantom{$B$}};
\path (b) edge[bend left=20] node[midway,left] {\footnotesize $i$} (c);
\path (c) edge[bend left=20] node[midway,right] {\footnotesize $j$} (d);
\path[dashed] (d) edge[bend left=30]  (b);
\end{tikzpicture}
\qqand
\begin{tikzpicture}[baseline=-2.5pt]
\tikzstyle{state1}=[ellipse,fill=white,inner sep=1,draw]
\draw (0,0) node[state1] {$S_1$} -- (1,0) node[fill=white,inner sep=1] {$\cdots$} -- (2,0) node[state1] {$S_k$};
\end{tikzpicture}
\,,
\]
then $FG$ corresponds to
\[
\begin{tikzpicture}[node distance=.5cm,baseline=0pt]
\tikzstyle{state}=[ellipse,draw=black, inner sep=1pt]
\node (a) {};
\node[state] (b) [left=of a] {\phantom{$B$}};
\node[state] (c) [left =of a, xshift=10pt, yshift=30pt] {$B\cap S_1$};
\node[state] (d) [right=of a, xshift=-10pt, yshift=30pt] {$B\cap S_k$};
\node[state] (e) [right=of a] {\phantom{$B$}};
\path (b) edge[bend left=20] node[midway,left] {\footnotesize $i$} (c);
\path (d) edge[bend left=20] node[midway,right] {\footnotesize $j$} (e);
\path[dashed] (c) edge[bend left=30]  (d);
\path[dashed] (e) edge[bend left=30]  (b);
\end{tikzpicture}
\,.
\]

For a concrete example, let $F\in\sSigma(A_4)$ and $G\in\Sigma(A_4)$ be the faces corresponding to 
\[
\begin{tikzpicture}[node distance=.5cm,baseline=0pt]
\tikzstyle{state}=[ellipse,draw=black,inner sep=1pt]
\node (a) {};
\node[state] (b) [below right=of a,yshift=10pt,xshift=5pt] {$2$};
\node[state] (c) [below left=of a,yshift=10pt,xshift=-5pt] {$46$};
\node[state] (d) [above=of a] {$135$};
\path (b) edge[bend left=20] node[midway,below] {\footnotesize $2$} (c);
\path (c) edge[bend left=20] node[midway,left] {\footnotesize $4$} (d);
\path (d) edge[bend left=20] node[midway,right] {\footnotesize $1$} (b);
\end{tikzpicture}
\qqand
\begin{tikzpicture}[baseline=-.1cm,scale=.75]
\tikzstyle{state1}=[ellipse,fill=white,inner sep=1,draw]
\draw (0,0) node[state1] {256} -- (1.5,0) node[state1] {13} -- (2.5,0) node[state1] {4};
\end{tikzpicture}
,
\]
respectively.
Then the face $FG\in\sSigma(A_4)$ corresponds to
\[
\begin{tikzpicture}[node distance=.5cm,baseline=0pt]
\tikzstyle{state}=[ellipse,draw=black,inner sep=1pt]
\node (a) {};
\node[state] (b) [right=of a,yshift=5pt] {$2$};
\node[state] (c) [below right=of a] {$6$};
\node[state] (d) [below left=of a] {$4$};
\node[state] (e) [left=of a,yshift=5pt] {$5$};
\node[state] (f) [above = of a] {$13$};
\path (b) edge[bend left=20] node[midway,right] {\footnotesize $2$} (c);
\path (c) edge[bend left=20] node[midway,below] {\footnotesize $3$} (d);
\path (d) edge[bend left=20] node[midway,left] {\footnotesize $4$} (e);
\path (e) edge[bend left=20] node[midway,xshift=-3pt,yshift=-3pt,above] {\footnotesize $5$} (f);
\path (f) edge[bend left=20] node[midway,xshift=3pt,yshift=-3pt,above] {\footnotesize $1$} (b);
\end{tikzpicture}
.
\]

The agreement between the geometric definition and the combinatorial procedure is illustrated in Figure~\ref{fig:prodA2torus}.

\begin{figure}[!h]
\begin{tikzpicture}
\draw (0,.375) node {
\begin{tikzpicture}[cm={1,0,.5,.8660254,(0,0)},baseline=0,scale=3]
\draw[draw=none,fill=blue!10!white] (0,0)--(1,-1)--(0,-1)--(0,0);
\draw[draw=none,fill=blue!10!white] (-1,0)--(-1.2,0)--(-1.2,.2)--(-1,0);
\draw[draw=none,fill=blue!10!white] (1,0)--(1.2,0)--(1,.2)--(1,0);
\draw[draw=none,fill=blue!10!white] (-1,1)--(-1,1.25) .. controls (-.9,1.5) and (-.5,1.1).. (-.25,1.25)--(0,1)--(-1,1);
\draw[blue] (-.5,1) node[inner sep=0,scale=.75] (e1)
{
\begin{tikzpicture}[node distance=.5cm,black]
\tikzstyle{state}=[ellipse,draw,fill=blue,inner sep=1pt]
\node (a) {};
\node[state] (b) [left=of a] {\color{white}{$23$}};
\node[state] (c) [right=of a] {\color{white}{$1$}};
\path (b) edge[bend left=20] node[midway,above] {\footnotesize $1$} (c);
\path (c) edge[bend left=20] node[midway,below] {\footnotesize $2$} (b);
\end{tikzpicture}
};
\draw[blue] (.5,-1) node[inner sep=0,scale=.75] (e2)
{
\begin{tikzpicture}[node distance=.5cm,black]
\tikzstyle{state}=[ellipse,draw,fill=blue,inner sep=1pt]
\node (a) {};
\node[state] (b) [left=of a] {\color{white}{$23$}};
\node[state] (c) [right=of a] {\color{white}{$1$}};
\path (b) edge[bend left=20] node[midway,above] {\footnotesize $1$} (c);
\path (c) edge[bend left=20] node[midway,below] {\footnotesize $2$} (b);
\end{tikzpicture}
};
\draw[blue,very thick] (-1,1)-- (e1) --(0,1);
\draw[blue,very thick] (1,-1)-- (e2)--(0,-1);
\draw[very thick] (-1.2,.2)--(.2,-1.2);
\draw[very thick] (1.2,-.2)--(-.2,1.2);
\draw[very thick] (-1,-.2)--(-1,1.2);
\draw[very thick] (1,-1.2)--(1,.2);
\draw[very thick] (-1.2,1.2)--(0,0);
\draw[very thick] (0,0)--(1.2,-1.2);
\draw[very thick] (-1,0)--(0,0);
\draw[very thick,blue] (-1,0)--(-1.2,0);
\draw[very thick] (0,0)--(1,0);
\draw[very thick,blue] (1,0)--(1.2,0);
\draw[very thick] (0,-1.2)--(0,0);
\draw[very thick] (0,0)--(0,1.2);
\draw[very thick] (-1,1)--(-1.2,1);
\draw[very thick] (0,1)--(.2,1);
\draw[very thick] (-.2,-1)--(0,-1);
\draw[very thick] (1.2,-1)--(1,-1);
\draw (.3,-.6) node[scale=.75] 
{
\begin{tikzpicture}[node distance=.5cm]
\tikzstyle{state}=[ellipse,draw=black, inner sep=1pt]
\node (a) {};
\node[state] (b) [left=of a,xshift=5pt] {$2$};
\node[state] (c) [above=of a] {$3$};
\node[state] (e) [right=of a,xshift=-5pt] {$1$};
\path (b) edge[bend left=20] node[midway,left] {\footnotesize $0$} (c);
\path (c) edge[bend left=20] node[midway,right] {\footnotesize $1$} (e);
\path (e) edge[bend left=20] node[midway,below] {\footnotesize $2$} (b);
\end{tikzpicture}
};
\draw (0,0) node[circle,fill,inner sep=.1cm] {};
\draw (0,1) node[circle,fill,inner sep=.1cm] {};
\draw (-1,0) node[circle,fill,inner sep=.1cm] {};
\draw (1,-1) node[circle,fill,inner sep=.1cm] {};
\draw (1,0) node[circle,fill,inner sep=.1cm] {};
\draw (0,-1) node[circle,fill,inner sep=.1cm] {};
\draw (-1,1) node[circle,fill,inner sep=.1cm] {};
\draw[red,line width=2,->,>=stealth] (.15,-1) node {$\bullet$} -- (.15,-.8);
\draw[red,line width=2,->,>=stealth] (-.85,1) node {$\bullet$} -- (-.85,1.2);
\end{tikzpicture}
};
\draw (0,0) circle (6);
\draw[dashed] (1.5,2.598) -- (4,6.928);
\draw[dashed] (-1.5,2.598) -- (-4,6.928);
\draw[dashed] (1.5,-2.598) -- (4,-6.928);
\draw[dashed] (-1.5,-2.598) -- (-4,-6.928);
\draw[dashed] (3.6,0) -- (7,0);
\draw[dashed] (-3.6,0) --(-7,0);
\draw (3,5.196) node[circle,fill=red,inner sep=.1cm] (b) {};
\draw (-3,5.196) node[circle,fill=black,inner sep=.1cm] {};
\draw (3,-5.196) node[circle,fill=black,inner sep=.1cm] {};
\draw (-3,-5.196) node[circle,fill=black,inner sep=.1cm] {};
\draw (6,0) node[circle,fill=black,inner sep=.1cm] {};
\draw (-6,0) node[circle,fill=black,inner sep=.1cm] {};
\draw (4,6.928) node[inner sep=0,rotate=60] (12)
{
\begin{tikzpicture}
\tikzstyle{state1}=[ellipse,fill=red,draw,inner sep=1,draw]
\draw (0,0) node[state1] {\color{white}{12}} -- (1,0) node[state1] {\color{white}{3}};
\end{tikzpicture}
};
\draw[very thick,red] (b)--(12);
\end{tikzpicture}
\vspace{.5cm}
\[
\begin{tikzpicture}[baseline=-.1cm,node distance=.5cm]
\tikzstyle{state}=[ellipse,draw=black,inner sep=1pt]
\node (a) {};
\node[state] (b) [left=of a] {$23$};
\node[state] (c) [right=of a] {$1$};
\path (b) edge[bend left=20] node[midway,above] {\footnotesize $1$} (c);
\path (c) edge[bend left=20] node[midway,below] {\footnotesize $2$} (b);
\end{tikzpicture}
\bdot
\begin{tikzpicture}[baseline=-.1cm]
\tikzstyle{state1}=[ellipse,fill=white,inner sep=1,draw]
\draw (0,0) node[state1] {12} -- (1,0) node[state1] {3};
\end{tikzpicture}
=
\begin{tikzpicture}[baseline=0,node distance=.5cm]
\tikzstyle{state}=[ellipse,draw=black, inner sep=1pt]
\node (a) {};
\node[state] (b) [left=of a,xshift=5pt] {$2$};
\node[state] (c) [above=of a] {$3$};
\node[state] (e) [right=of a,xshift=-5pt] {$1$};
\path (b) edge[bend left=20] node[midway,left] {\footnotesize $0$} (c);
\path (c) edge[bend left=20] node[midway,right] {\footnotesize $1$} (e);
\path (e) edge[bend left=20] node[midway,below] {\footnotesize $2$} (b);
\end{tikzpicture}
\]
\caption{The product of a face of $\sSigma(A_2)$ with a face of $\Sigma(A_2)$.}\label{fig:prodA2torus}
\end{figure}
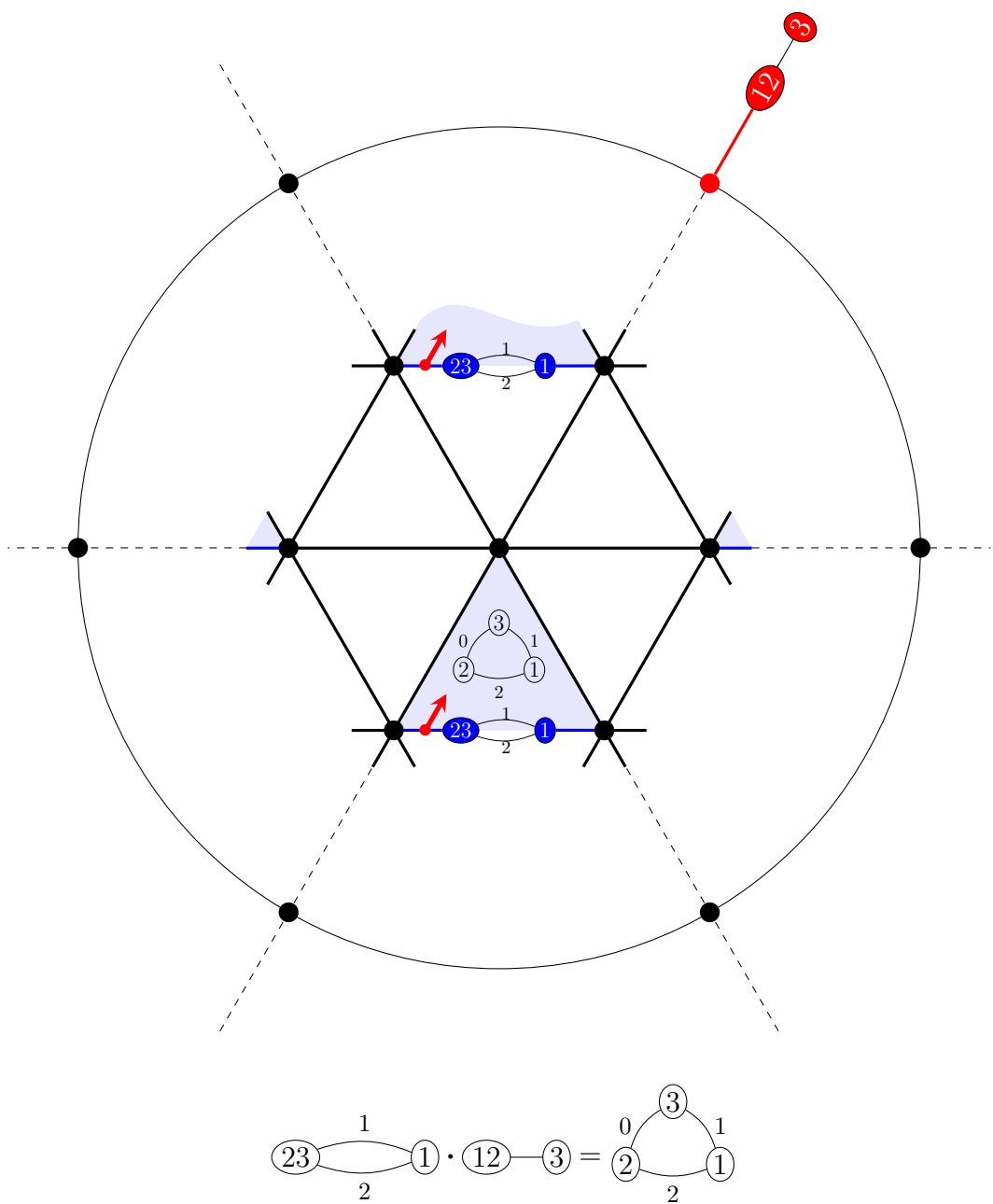

\subsection{The root system $C_n$}\label{ss:rootC}

The root system in question is
\[
C_n := \{\pm 2\varepsilon_i \mid 1\leq i\leq n\}\cup \{a\varepsilon_i+b\varepsilon_j \mid 1\leq i\neq j\leq n,\, a,b=\pm 1\}.
\]
It contains $2n^2$ vectors. The ambient space is $\R^n$.

Let $[-n,n]$ denote the interval $\{-n,\ldots,-1,0,1,\ldots,n\}$.
The Weyl group of $C_n$ may be identified with the wreath product $S_2\wr S_n$,
or with the group of permutations $w$ of $[-n,n]$ such that $w(-i)=-w(i)$ for all $i\in [-n,n]$.
Note that such a permutation has $w(0)=0$ and is determined by the list of values
$w_i=w(i)$, $i=1,\ldots,n$. 
In examples, we represent negatives with bars, so that $w=25\Bar{1}\Bar{4}3$ stands for the following permutation of $[-5,5]$:
\setcounter{MaxMatrixCols}{11}
\[
\begin{pmatrix}
-5 & -4 & -3 & -2 & -1 & 0 & 1 & 2 & 3 & 4 & 5\\
-3 & 4 & 1 & -5 & -2 & 0 & 2 & 5 & -1 & -4 & 3
\end{pmatrix}.
\]
We also adopt the convention that $w_{n+1}=w_0=0$.

For the simple roots, we choose $\Delta=\{\alpha_1,\ldots,\alpha_n\}$ where
\[
\alpha_1:= 2\varepsilon_1 \qand \alpha_i := \varepsilon_{i}-\varepsilon_{i-1}
\]
for $2\leq i\leq n$. 
The set of positive roots is then 
\[
\Pi =\{2\varepsilon_i \mid 1\leq i\leq n\} \cup \{\varepsilon_i\pm \varepsilon_j \mid i>j\}.
\]
The highest root is $\hr=\alpha_1+2\sum_{i=2}^n \alpha_i = 2\varepsilon_n$.
See Figure~\ref{fig:C2-finite} for an illustration. 

We identify $\Delta$ with $\{0,1,\ldots,n-1\}$ and $\aDelta$ with 
$\{0,1,\ldots,n\}$ via 
\[
\alpha_0 \leftrightarrow n \qand
\alpha_i\leftrightarrow i-1
\]
for $i=1,\ldots,n$. With these choices, the descent sets (ordinary and affine) of $w$ are
 \[
 \Des(w) = \{i \mid 0\leq i\leq n-1,\,w_i>w_{i+1}\}
 \qand
 \aDes(w) = \{i \mid 0\leq i\leq n,\,w_i>w_{i+1}\}.
 \]  
The integers in $[-n,n]$ are ordered in the standard fashion.
 For example, $\aDes(25\Bar{1}\Bar{4}3)=\{2,3,5\}$ and $\aDes(\Bar{2}5\Bar{1}\Bar{4}3)=\{0,2,3,5\}$.

The Coxeter arrangement $\Hy(C_n)$ consists of the coordinate hyperplanes 
$\{ x \in \R^n \mid x_i = 0 \}$, $i=1,\ldots,n$, and the hyperplanes $\{ x \in \R^n \mid x_i = \epsilon x_j \}$, where $1\leq i < j \leq n$ and $\epsilon=\pm 1$.

The set $C_n^\vee$ of coroots is the root system $B_n$.

\subsection{Faces of the finite Coxeter complex of type $C$}\label{ss:CCC}

The faces of the Coxeter complex $\Sigma(C_n)$ are in correspondence with compositions of the set $[-n,n]$ with the following property: reversing the order of the list of blocks has the same effect as replacing each block $B$ by its negative $-B:=\{-i\mid i\in B\}$.
Such a composition has an odd number of blocks with the middle block being equal to its negative and containing $0$. For example,
\[
\begin{tikzpicture}
\tikzstyle{state1}=[ellipse,fill=white,inner sep=1,draw]
\draw (-2,0) node[state1] {$\Bar{5}\Bar{1}3$} -- (-1,0) node[state1] {$\Bar{4}$} --
(0,0) node[state1] {$\Bar{2} 0 2$} -- (1,0) node[state1] {$4$} -- (2,0) node[state1] {$\Bar{3}15$};
\end{tikzpicture}.
\]
is such a composition of $[-5,5]$.

Given an integer $i$, write
\[
x_i:= \epsilon(i) x_{\abs{i}},
\]
where $\epsilon(i)=\pm 1$ and $\abs{i}$ denote the sign and the absolute value of $i$.
With this convention, a composition as above corresponds to the face in $\Sigma(C_n)$ determined by the (in)equalities
\begin{align*}
x_i=x_j & \text{ if $i$ and $j$ belong to the same block,}\\
x_i<x_j & \text{ if the block of $i$ precedes that of $j$.}
\end{align*}
The faces in $\Sigma(C_2)$, labeled with compositions, are shown in Figure \ref{fig:C2Coxeter}. 

\begin{figure}[!h]
\begin{tikzpicture}[baseline=0,scale=2]
\draw (0,1.5) node[inner sep=0,rotate=90] (2) 
{
\begin{tikzpicture}
\tikzstyle{state1}=[ellipse,white,fill=red,draw=black,inner sep=1]
\draw (0,0) node[state1] {$\bar 2$} -- (1,0) node[state1] {$\bar 1 0 1$} -- (2,0) node[state1] {2};
\end{tikzpicture}
};
\draw (1,1) node[inner sep=0,rotate=45] (12) 
{
\begin{tikzpicture}
\tikzstyle{state1}=[ellipse,fill=green,inner sep=1,draw]
\draw (0,0) node[state1] {$\bar 2 \bar 1$} -- (1,0) node[state1] {0} -- (2,0) node[state1] {12};
\end{tikzpicture}
};
\draw (1.5,0) node[inner sep=0] (1) 
{
\begin{tikzpicture}
\tikzstyle{state1}=[ellipse,white,fill=red,inner sep=1,draw=black]
\draw (0,0) node[state1] {$\bar 1$} -- (1,0) node[state1] {$\bar 2 0 2$} -- (2,0) node[state1] {1};
\end{tikzpicture}
};
\draw (1,-1) node[inner sep=0,rotate=-45] (b21) 
{
\begin{tikzpicture}
\tikzstyle{state1}=[ellipse,fill=green,inner sep=1,draw]
\draw (0,0) node[state1] {$\bar 1 2$} -- (1,0) node[state1] {0} -- (2,0) node[state1] {$\bar 2 1$};
\end{tikzpicture}
};
\draw (0,-1.5) node[inner sep=0,rotate=90] (b2) 
{
\begin{tikzpicture}
\tikzstyle{state1}=[ellipse,white,fill=red,inner sep=1,draw=black]
\draw (0,0) node[state1] {2} -- (1,0) node[state1] {$\bar 1 0 1$} -- (2,0) node[state1] {$\bar 2$};
\end{tikzpicture}
};
\draw (-1,-1) node[inner sep=0,rotate=45] (b2b1) 
{
\begin{tikzpicture}
\tikzstyle{state1}=[ellipse,fill=green,inner sep=1,draw]
\draw (0,0) node[state1] {12} -- (1,0) node[state1] {0} -- (2,0) node[state1] {$\bar 2 \bar 1$};
\end{tikzpicture}
};
\draw (-1.5,0) node[inner sep=0] (b1) 
{
\begin{tikzpicture}
\tikzstyle{state1}=[ellipse,white,fill=red,inner sep=1,draw=black]
\draw (0,0) node[state1] {1} -- (1,0) node[state1] {$\bar 2 0 2$} -- (2,0) node[state1] {$\bar 1$};
\end{tikzpicture}
};
\draw (-1,1) node[inner sep=0,rotate=-45] (b12) 
{
\begin{tikzpicture}
\tikzstyle{state1}=[ellipse,fill=green,inner sep=1,draw]
\draw (0,0) node[state1] {$\bar 2 1$} -- (1,0) node[state1] {0} -- (2,0) node[state1] {$\bar 1 2$};
\end{tikzpicture}
};
\draw (0,0) node[ellipse,inner sep=1,draw=black,fill=yellow] (0) {$\bar 2 \bar 1 0 12$};
\draw (1,2) node 
{
\begin{tikzpicture}[scale=.5]
\tikzstyle{state1}=[ellipse,fill=white,inner sep=1,draw]
\draw (0,0) node[state1] {$\bar 2$} -- (1,0) node[state1] {$\bar 1$} -- (2,0) node[state1] {0} -- (3,0) node[state1] {1} -- (4,0) node[state1] {2};
\end{tikzpicture}
};
\draw (2,.75) node 
{
\begin{tikzpicture}[scale=.5]
\tikzstyle{state1}=[ellipse,fill=white,inner sep=1,draw]
\draw (0,0) node[state1] {$\bar 1$} -- (1,0) node[state1] {$\bar 2$} -- (2,0) node[state1] {0} -- (3,0) node[state1] {2} -- (4,0) node[state1] {1};
\end{tikzpicture}
};
\draw (2,-.75) node 
{
\begin{tikzpicture}[scale=.5]
\tikzstyle{state1}=[ellipse,fill=white,inner sep=1,draw]
\draw (0,0) node[state1] {$\bar 1$} -- (1,0) node[state1] {2} -- (2,0) node[state1] {0} -- (3,0) node[state1] {$\bar 2$} -- (4,0) node[state1] {1};
\end{tikzpicture}
};
\draw (1,-2) node 
{
\begin{tikzpicture}[scale=.5]
\tikzstyle{state1}=[ellipse,fill=white,inner sep=1,draw]
\draw (0,0) node[state1] {2} -- (1,0) node[state1] {$\bar 1$} -- (2,0) node[state1] {0} -- (3,0) node[state1] {1} -- (4,0) node[state1] {$\bar 2$};
\end{tikzpicture}
};
\draw (-1,-2) node 
{
\begin{tikzpicture}[scale=.5]
\tikzstyle{state1}=[ellipse,fill=white,inner sep=1,draw]
\draw (0,0) node[state1] {2} -- (1,0) node[state1] {1} -- (2,0) node[state1] {0} -- (3,0) node[state1] {$\bar 1$} -- (4,0) node[state1] {$\bar 2$};
\end{tikzpicture}
};
\draw (-2,-.75) node 
{
\begin{tikzpicture}[scale=.5]
\tikzstyle{state1}=[ellipse,fill=white,inner sep=1,draw]
\draw (0,0) node[state1] {1} -- (1,0) node[state1] {2} -- (2,0) node[state1] {0} -- (3,0) node[state1] {$\bar 2$} -- (4,0) node[state1] {$\bar 1$};
\end{tikzpicture}
};
\draw (-2,.75) node 
{
\begin{tikzpicture}[scale=.5]
\tikzstyle{state1}=[ellipse,fill=white,inner sep=1,draw]
\draw (0,0) node[state1] {1} -- (1,0) node[state1] {$\bar 2$} -- (2,0) node[state1] {0} -- (3,0) node[state1] {2} -- (4,0) node[state1] {$\bar 1$};
\end{tikzpicture}
};
\draw (-1,2) node 
{
\begin{tikzpicture}[scale=.5]
\tikzstyle{state1}=[ellipse,fill=white,inner sep=1,draw]
\draw (0,0) node[state1] {$\bar 2$} -- (1,0) node[state1] {1} -- (2,0) node[state1] {0} -- (3,0) node[state1] {$\bar 1$} -- (4,0) node[state1] {2};
\end{tikzpicture}
};
\draw[green,very thick] (-2,2)--(b12)--(0)--(b21)--(2,-2);
\draw[green,very thick] (-2,-2)--(b2b1)--(0)--(12)--(2,2);
\draw[red,very thick] (-2.82,0)--(b1)--(0)--(1)--(2.82,0);
\draw[red,very thick] (0,-2.82)--(b2)--(0)--(2)--(0,2.82);
\end{tikzpicture}
\caption{The faces in $\Sigma(C_2)$, with colors corresponding to $W$-orbits.}
\label{fig:C2Coxeter}
\end{figure}
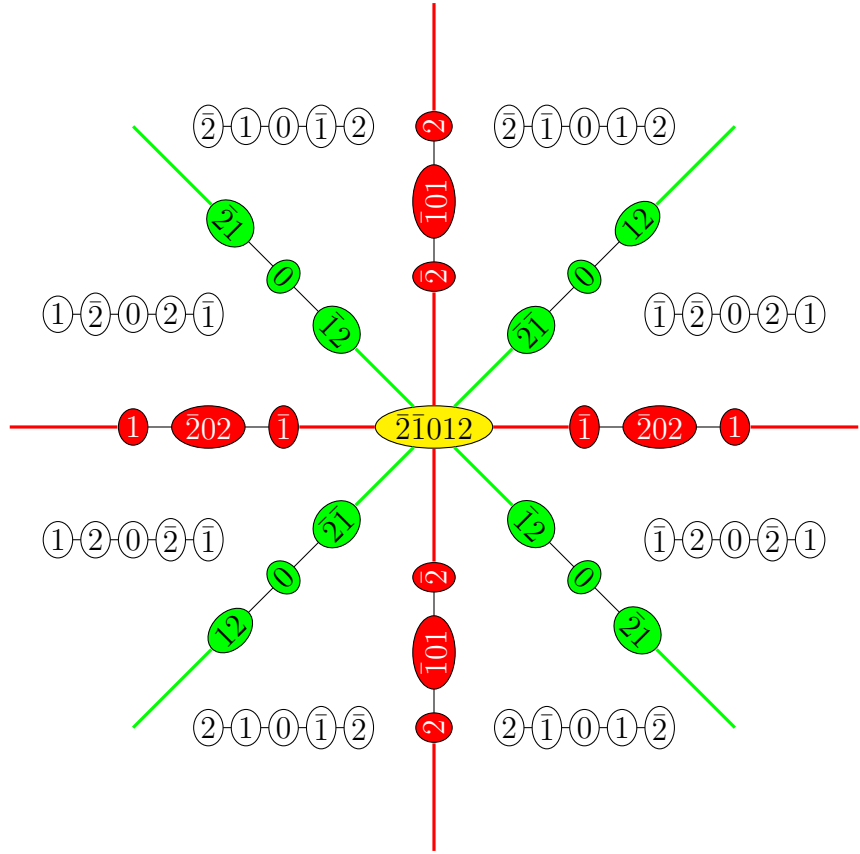

Let $F\in\Sigma(C_n)$ be a face.
The color set of $F$ is 
\[
\col(F)=\{a_0,a_0+a_1,\ldots,a_0+a_1+\cdots+a_{k-1}\}\subseteq\{0,1,\ldots,n-1\},
\]
where $a_0$ is the number of positive elements in the block of $0$, $a_i$ is the size of
the $i$-th block as we proceed outwards (either to the right or to the left), and the $(k-1)$-th is the penultimate block we encounter. 
Figure \ref{fig:C2Coxeter} shows the orbits in $\Sigma(C_2)$. For example, the rays in red constitute the orbit of color set $\{1\}$ and the rays in green that of color set $\{0\}$.
The permutation $w_F$ (as in~\eqref{e:wF}, or as in Proposition~\ref{prp:Dcol}) is obtained by first listing the positive elements in the block of $0$ in increasing order, then proceeding
to the right and listing all the elements in each block in increasing order.

\subsection{Faces of the Steinberg torus of type $C$}\label{ss:STC}

Consider a partition of the set $[-n,n]$ and a cyclic order
on the set of blocks, with the property that replacing each block by its negative has the same effect as reversing the order. In such a partition the block containing $0$ is equal to its negative. We represent such structures by drawing each block as a vertex of a regular polygon and proceeding clockwise according to the cyclic order. The result is a necklace with the property that 
flipping it across the diameter through $0$ has the same effect as replacing each block by its negative. Here is an example, for $n=5$:
\begin{equation}\label{e:spinC} 
\begin{tikzpicture}[node distance=.5cm,baseline=0pt]
\tikzstyle{state}=[ellipse,draw=black,inner sep=1pt]
\node (a) {};
\node[state] (b) [right=of a,xshift=8pt, yshift=8pt] {$4$};
\node[state] (c) [below right=of a] {$\Bar{3}15$};
\node[state] (d) [below left=of a] {$\Bar{5}\Bar{1}3$};
\node[state] (e) [left=of a,xshift=-8pt, yshift=8pt] {$\Bar{4}$};
\node[state] (f) [above = of a] {$\Bar{2}02$};
\path (b) edge[bend left=20]  (c);
\path (c) edge[bend left=20]  (d);
\path (d) edge[bend left=20]  (e);
\path (e) edge[bend left=20]  (f);
\path (f) edge[bend left=20]  (b);
\end{tikzpicture}
.
\end{equation}

The faces of the Steinberg torus $\sSigma(C_n)$ are in correspondence with such necklaces.
We leave to the reader the task of describing the equations of (an affine representative) 
determining a face. The faces in $\sSigma(C_2)$, labeled with such necklaces, are shown in Figure \ref{fig:C2Steinberg}. Inclusion of faces corresponds to edge contractions, with the understanding that if you contract an edge, you must also contract its mirror image.

\begin{figure}[!h]
\begin{tikzpicture}[baseline=0,scale=2]
\draw (0,1.5) node[inner sep=1,scale=.75] (2) 
{
\begin{tikzpicture}[node distance=.5cm]
\tikzstyle{state}=[ellipse,white,fill=red,draw=black,inner sep=1pt]
\node (a) {};
\node[state] (b) [above=of a] {$\bar 1 0 1$};
\node[state] (c) [right=of a,yshift=-10pt] {$2$};
\node[state] (d) [left=of a,yshift=-10pt] {$\bar 2$};
\path (b) edge[bend left=20]  (c);
\path (c) edge[bend left=20]  (d);
\path (d) edge[bend left=20]  (b);
\end{tikzpicture}
};
\draw (1.5,1.5) node[inner sep=0,scale=.75] (12) 
{
\begin{tikzpicture}[node distance=.5cm]
\tikzstyle{state}=[ellipse,fill=green,draw,inner sep=1pt]
\node (a) {};
\node[state] (b) [above=of a] {$0$};
\node[state] (c) [right=of a,yshift=-10pt] {$12$};
\node[state] (d) [left=of a,yshift=-10pt] {$\bar 2 \bar 1$};
\path (b) edge[bend left=20]  (c);
\path (c) edge[bend left=20]  (d);
\path (d) edge[bend left=20]  (b);
\end{tikzpicture}
};
\draw (1.5,0) node[inner sep=0,scale=.75] (1) 
{
\begin{tikzpicture}[node distance=.5cm]
\tikzstyle{state}=[ellipse,white,fill=red,draw=black,inner sep=1pt]
\node (a) {};
\node[state] (b) [above=of a] {$\bar 2 0 2$};
\node[state] (c) [right=of a,yshift=-10pt] {$1$};
\node[state] (d) [left=of a,yshift=-10pt] {$\bar 1$};
\path (b) edge[bend left=20]  (c);
\path (c) edge[bend left=20]  (d);
\path (d) edge[bend left=20]  (b);
\end{tikzpicture}
};
\draw (1.5,-1.5) node[inner sep=0,scale=.75] (b21) 
{
\begin{tikzpicture}[node distance=.5cm]
\tikzstyle{state}=[ellipse,fill=green,draw,inner sep=1pt]
\node (a) {};
\node[state] (b) [above=of a] {$0$};
\node[state] (c) [right=of a,yshift=-10pt] {$\bar 2 1$};
\node[state] (d) [left=of a,yshift=-10pt] {$\bar 1 2$};
\path (b) edge[bend left=20]  (c);
\path (c) edge[bend left=20]  (d);
\path (d) edge[bend left=20]  (b);
\end{tikzpicture}
};
\draw (0,-1.5) node[inner sep=1,scale=.75] (b2) 
{
\begin{tikzpicture}[node distance=.5cm]
\tikzstyle{state}=[ellipse,white,draw=black,fill=red,inner sep=1pt]
\node (a) {};
\node[state] (b) [above=of a] {$\bar 1 0 1$};
\node[state] (c) [right=of a,yshift=-10pt] {$\bar 2$};
\node[state] (d) [left=of a,yshift=-10pt] {$2$};
\path (b) edge[bend left=20]  (c);
\path (c) edge[bend left=20]  (d);
\path (d) edge[bend left=20]  (b);
\end{tikzpicture}
};
\draw (-1.5,-1.5) node[inner sep=0,scale=.75] (b2b1) 
{
\begin{tikzpicture}[node distance=.5cm]
\tikzstyle{state}=[ellipse,fill=green,draw,inner sep=1pt]
\node (a) {};
\node[state] (b) [above=of a] {$0$};
\node[state] (c) [right=of a,yshift=-10pt] {$\bar 2 \bar 1$};
\node[state] (d) [left=of a,yshift=-10pt] {$12$};
\path (b) edge[bend left=20]  (c);
\path (c) edge[bend left=20]  (d);
\path (d) edge[bend left=20]  (b);
\end{tikzpicture}
};
\draw (-1.5,0) node[inner sep=0,scale=.75] (b1) 
{
\begin{tikzpicture}[node distance=.5cm]
\tikzstyle{state}=[ellipse,white,fill=red,draw=black,inner sep=1pt]
\node (a) {};
\node[state] (b) [above=of a] {$\bar 2 0 2$};
\node[state] (c) [right=of a,yshift=-10pt] {$\bar 1$};
\node[state] (d) [left=of a,yshift=-10pt] {$1$};
\path (b) edge[bend left=20]  (c);
\path (c) edge[bend left=20]  (d);
\path (d) edge[bend left=20]  (b);
\end{tikzpicture}
};
\draw (-1.5,1.5) node[inner sep=0,scale=.75] (b12) 
{
\begin{tikzpicture}[node distance=.5cm]
\tikzstyle{state}=[ellipse,fill=green,draw,inner sep=1pt]
\node (a) {};
\node[state] (b) [above=of a] {$0$};
\node[state] (c) [right=of a,yshift=-10pt] {$\bar 1 2$};
\node[state] (d) [left=of a,yshift=-10pt] {$\bar 2 1$};
\path (b) edge[bend left=20] (c);
\path (c) edge[bend left=20]  (d);
\path (d) edge[bend left=20]  (b);
\end{tikzpicture}
};
\draw (1.5,3) node[inner sep=0,scale=.75] (tr)
{
\begin{tikzpicture}[node distance=.5cm]
\tikzstyle{state}=[ellipse,white,fill=blue,draw=black,inner sep=1pt]
\node (a) {};
\node[state] (b) [above=of a] {$0$};
\node[state] (c) [right=of a] {$1$};
\node[state] (d) [below=of a] {$\bar 2 2$};
\node[state] (e) [left=of a] {$\bar 1$};
\path (b) edge[bend left=20]  (c);
\path (c) edge[bend left=20]  (d);
\path (d) edge[bend left=20]  (e);
\path (e) edge[bend left=20]  (b);
\end{tikzpicture}
};
\draw (3,1.5) node[inner sep=0,scale=.75] (rt)
{
\begin{tikzpicture}[node distance=.5cm]
\tikzstyle{state}=[ellipse,white,fill=blue,draw=black,inner sep=1pt]
\node (a) {};
\node[state] (b) [above=of a] {$0$};
\node[state] (c) [right=of a] {$2$};
\node[state] (d) [below=of a] {$\bar 1 1$};
\node[state] (e) [left=of a] {$\bar 2$};
\path (b) edge[bend left=20]  (c);
\path (c) edge[bend left=20]  (d);
\path (d) edge[bend left=20]  (e);
\path (e) edge[bend left=20]  (b);
\end{tikzpicture}
};
\draw (3,-1.5) node[inner sep=0,scale=.75] (rb)
{
\begin{tikzpicture}[node distance=.5cm]
\tikzstyle{state}=[ellipse,white,fill=blue,draw=black,inner sep=1pt]
\node (a) {};
\node[state] (b) [above=of a] {$0$};
\node[state] (c) [right=of a] {$\bar 2$};
\node[state] (d) [below=of a] {$\bar 1 1$};
\node[state] (e) [left=of a] {$2$};
\path (b) edge[bend left=20]  (c);
\path (c) edge[bend left=20]  (d);
\path (d) edge[bend left=20]  (e);
\path (e) edge[bend left=20]  (b);
\end{tikzpicture}
};
\draw (1.5,-3) node[inner sep=0,scale=.75] (br)
{
\begin{tikzpicture}[node distance=.5cm]
\tikzstyle{state}=[ellipse,white,fill=blue,draw=black,inner sep=1pt]
\node (a) {};
\node[state] (b) [above=of a] {$0$};
\node[state] (c) [right=of a] {$1$};
\node[state] (d) [below=of a] {$\bar 2 2$};
\node[state] (e) [left=of a] {$\bar 1$};
\path (b) edge[bend left=20]  (c);
\path (c) edge[bend left=20]  (d);
\path (d) edge[bend left=20]  (e);
\path (e) edge[bend left=20]  (b);
\end{tikzpicture}
};
\draw (-1.5,-3) node[inner sep=0,scale=.75] (bl)
{
\begin{tikzpicture}[node distance=.5cm]
\tikzstyle{state}=[ellipse,white,fill=blue,draw=black,inner sep=1pt]
\node (a) {};
\node[state] (b) [above=of a] {$0$};
\node[state] (c) [right=of a] {$\bar 1$};
\node[state] (d) [below=of a] {$\bar 2 2$};
\node[state] (e) [left=of a] {$1$};
\path (b) edge[bend left=20]  (c);
\path (c) edge[bend left=20]  (d);
\path (d) edge[bend left=20]  (e);
\path (e) edge[bend left=20]  (b);
\end{tikzpicture}
};
\draw (-3,-1.5) node[inner sep=0,scale=.75] (lb)
{
\begin{tikzpicture}[node distance=.5cm]
\tikzstyle{state}=[ellipse,white,fill=blue,draw=black,inner sep=1pt]
\node (a) {};
\node[state] (b) [above=of a] {$0$};
\node[state] (c) [right=of a] {$\bar 2$};
\node[state] (d) [below=of a] {$\bar 1 1$};
\node[state] (e) [left=of a] {$2$};
\path (b) edge[bend left=20]  (c);
\path (c) edge[bend left=20]  (d);
\path (d) edge[bend left=20]  (e);
\path (e) edge[bend left=20]  (b);
\end{tikzpicture}
};
\draw (-3,1.5) node[inner sep=0,scale=.75] (lt)
{
\begin{tikzpicture}[node distance=.5cm]
\tikzstyle{state}=[ellipse,white,fill=blue,draw=black,inner sep=1pt]
\node (a) {};
\node[state] (b) [above=of a] {$0$};
\node[state] (c) [right=of a] {$2$};
\node[state] (d) [below=of a] {$\bar 1 1$};
\node[state] (e) [left=of a] {$\bar 2$};
\path (b) edge[bend left=20]  (c);
\path (c) edge[bend left=20]  (d);
\path (d) edge[bend left=20]  (e);
\path (e) edge[bend left=20]  (b);
\end{tikzpicture}
};
\draw (-1.5,3) node[inner sep=0,scale=.75] (tl)
{
\begin{tikzpicture}[node distance=.5cm]
\tikzstyle{state}=[ellipse,white,fill=blue,draw=black,inner sep=1pt]
\node (a) {};
\node[state] (b) [above=of a] {$0$};
\node[state] (c) [right=of a] {$\bar 1$};
\node[state] (d) [below=of a] {$\bar 2 2$};
\node[state] (e) [left=of a] {$1$};
\path (b) edge[bend left=20]  (c);
\path (c) edge[bend left=20]  (d);
\path (d) edge[bend left=20]  (e);
\path (e) edge[bend left=20]  (b);
\end{tikzpicture}
};
\draw (0,0) node[inner sep=1,scale=.75] (0) 
{
\begin{tikzpicture}[node distance=.5cm]
\tikzstyle{state}=[ellipse,draw=black,fill=yellow,inner sep=1pt]
\draw (0,0) node[state] (a) {$\bar 2 \bar 1 0 1 2$};
\draw (a) .. controls (-1,-1) and (1,-1) .. (a);
\end{tikzpicture}
};
\draw (0,3) node[inner sep=0,scale=.75] (b22)
{
\begin{tikzpicture}[node distance=.5cm]
\tikzstyle{state}=[ellipse,draw=black,fill=magenta,inner sep=1pt]
\node[state] (a) {\color{white}{$\bar 2 2$}};
\node[state] (b) [above=of a] {\color{white}{$\bar 1 0 1$}};
\path (b) edge[bend left=30]  (a);
\path (a) edge[bend left=30]  (b);
\end{tikzpicture}
};
\draw (0,-3) node[inner sep=0,scale=.75] (ab22)
{
\begin{tikzpicture}[node distance=.5cm]
\tikzstyle{state}=[ellipse,draw=black,fill=magenta,inner sep=1pt]
\node[state] (a) {\color{white}{$\bar 2 2$}};
\node[state] (b) [above=of a] {\color{white}{$\bar 1 0 1$}};
\path (b) edge[bend left=30]  (a);
\path (a) edge[bend left=30]  (b);
\end{tikzpicture}
};
\draw (3,0) node[inner sep=0,scale=.75] (b11)
{
\begin{tikzpicture}[node distance=.5cm]
\tikzstyle{state}=[ellipse,draw=black,fill=magenta,inner sep=1pt]
\node[state] (a) {\color{white}{$\bar 1 1$}};
\node[state] (b) [above=of a] {\color{white}{$\bar 2 0 2$}};
\path (b) edge[bend left=30]  (a);
\path (a) edge[bend left=30]  (b);
\end{tikzpicture}
};
\draw (-3,0) node[inner sep=0,scale=.75] (ab11)
{
\begin{tikzpicture}[node distance=.5cm]
\tikzstyle{state}=[ellipse,draw=black,fill=magenta,inner sep=1pt]
\node[state] (a) {\color{white}{$\bar 1 1$}};
\node[state] (b) [above=of a] {\color{white}{$\bar 2 0 2$}};
\path (b) edge[bend left=30]  (a);
\path (a) edge[bend left=30]  (b);
\end{tikzpicture}
};
\draw (3,3) node[inner sep=0,scale=.75] (b2b112)
{
\begin{tikzpicture}[node distance=.5cm]
\tikzstyle{state}=[ellipse,draw=black,fill=cyan,inner sep=1pt]
\node[state] (a) {\color{white}{$\bar 2 \bar 1 1 2$}};
\node[state] (b) [above=of a] {\color{white}{$0$}};
\path (b) edge[bend left=30]  (a);
\path (a) edge[bend left=30]  (b);
\end{tikzpicture}
};
\draw (3,-3) node[inner sep=0,scale=.75] (a1b2b112)
{
\begin{tikzpicture}[node distance=.5cm]
\tikzstyle{state}=[ellipse,draw=black,fill=cyan,inner sep=1pt]
\node[state] (a) {\color{white}{$\bar 2 \bar 1 1 2$}};
\node[state] (b) [above=of a] {\color{white}{$0$}};
\path (b) edge[bend left=30]  (a);
\path (a) edge[bend left=30]  (b);
\end{tikzpicture}
};
\draw (-3,-3) node[inner sep=0,scale=.75] (a2b2b112)
{
\begin{tikzpicture}[node distance=.5cm]
\tikzstyle{state}=[ellipse,draw=black,fill=cyan,inner sep=1pt]
\node[state] (a) {\color{white}{$\bar 2 \bar 1 1 2$}};
\node[state] (b) [above=of a] {\color{white}{$0$}};
\path (b) edge[bend left=30]  (a);
\path (a) edge[bend left=30]  (b);
\end{tikzpicture}
};
\draw (-3,3) node[inner sep=0,scale=.75] (a3b2b112)
{
\begin{tikzpicture}[node distance=.5cm]
\tikzstyle{state}=[ellipse,draw=black,fill=cyan,inner sep=1pt]
\node[state] (a) {\color{white}{$\bar 2 \bar 1 1 2$}};
\node[state] (b) [above=of a] {\color{white}{$0$}};
\path (b) edge[bend left=30]  (a);
\path (a) edge[bend left=30]  (b);
\end{tikzpicture}
};
\draw (.75,2) node[scale=.75]
{
\begin{tikzpicture}[node distance=.5cm]
\tikzstyle{state}=[ellipse,draw,inner sep=1pt]
\node (a) {};
\node[state] (b) [above=of a] {$0$};
\node[state] (c) [right=of a,yshift=3pt] {$1$};
\node[state] (d) [below right=of a,yshift=-5pt] {$2$};
\node[state] (e) [below left=of a,yshift=-5pt] {$\bar 2$};
\node[state] (f) [left=of a,yshift=3pt] {$\bar 1$};
\path (b) edge[bend left=20]  (c);
\path (c) edge[bend left=20]  (d);
\path (d) edge[bend left=20]  (e);
\path (e) edge[bend left=20]  (f);
\path (f) edge[bend left=20]  (b);
\end{tikzpicture}
};
\draw (2,.75) node[scale=.75]
{
\begin{tikzpicture}[node distance=.5cm]
\tikzstyle{state}=[ellipse,draw,inner sep=1pt]
\node (a) {};
\node[state] (b) [above=of a] {$0$};
\node[state] (c) [right=of a,yshift=3pt] {$2$};
\node[state] (d) [below right=of a,yshift=-5pt] {$1$};
\node[state] (e) [below left=of a,yshift=-5pt] {$\bar 1$};
\node[state] (f) [left=of a,yshift=3pt] {$\bar 2$};
\path (b) edge[bend left=20]  (c);
\path (c) edge[bend left=20]  (d);
\path (d) edge[bend left=20]  (e);
\path (e) edge[bend left=20]  (f);
\path (f) edge[bend left=20]  (b);
\end{tikzpicture}
};
\draw (2,-.75) node[scale=.75] 
{
\begin{tikzpicture}[node distance=.5cm]
\tikzstyle{state}=[ellipse,draw,inner sep=1pt]
\node (a) {};
\node[state] (b) [above=of a] {$0$};
\node[state] (c) [right=of a,yshift=3pt] {$\bar 2$};
\node[state] (d) [below right=of a,yshift=-5pt] {$1$};
\node[state] (e) [below left=of a,yshift=-5pt] {$\bar 1$};
\node[state] (f) [left=of a,yshift=3pt] {$2$};
\path (b) edge[bend left=20]  (c);
\path (c) edge[bend left=20]  (d);
\path (d) edge[bend left=20]  (e);
\path (e) edge[bend left=20]  (f);
\path (f) edge[bend left=20]  (b);
\end{tikzpicture}
};
\draw (.75,-2.2) node[scale=.75]
{
\begin{tikzpicture}[node distance=.5cm]
\tikzstyle{state}=[ellipse,draw,inner sep=1pt]
\node (a) {};
\node[state] (b) [above=of a] {$0$};
\node[state] (c) [right=of a,yshift=3pt] {$1$};
\node[state] (d) [below right=of a,yshift=-5pt] {$\bar 2$};
\node[state] (e) [below left=of a,yshift=-5pt] {$2$};
\node[state] (f) [left=of a,yshift=3pt] {$\bar 1$};
\path (b) edge[bend left=20]  (c);
\path (c) edge[bend left=20]  (d);
\path (d) edge[bend left=20]  (e);
\path (e) edge[bend left=20]  (f);
\path (f) edge[bend left=20]  (b);
\end{tikzpicture}
};
\draw (-.75,-2.2) node[scale=.75] 
{
\begin{tikzpicture}[node distance=.5cm]
\tikzstyle{state}=[ellipse,draw,inner sep=1pt]
\node (a) {};
\node[state] (b) [above=of a] {$0$};
\node[state] (c) [right=of a,yshift=3pt] {$\bar 1$};
\node[state] (d) [below right=of a,yshift=-5pt] {$\bar 2$};
\node[state] (e) [below left=of a,yshift=-5pt] {$2$};
\node[state] (f) [left=of a,yshift=3pt] {$1$};
\path (b) edge[bend left=20]  (c);
\path (c) edge[bend left=20]  (d);
\path (d) edge[bend left=20]  (e);
\path (e) edge[bend left=20]  (f);
\path (f) edge[bend left=20]  (b);
\end{tikzpicture}
};
\draw (-2,-.75) node[scale=.75]
{
\begin{tikzpicture}[node distance=.5cm]
\tikzstyle{state}=[ellipse,draw,inner sep=1pt]
\node (a) {};
\node[state] (b) [above=of a] {$0$};
\node[state] (c) [right=of a,yshift=3pt] {$\bar 2$};
\node[state] (d) [below right=of a,yshift=-5pt] {$\bar 1$};
\node[state] (e) [below left=of a,yshift=-5pt] {$1$};
\node[state] (f) [left=of a,yshift=3pt] {$2$};
\path (b) edge[bend left=20]  (c);
\path (c) edge[bend left=20]  (d);
\path (d) edge[bend left=20]  (e);
\path (e) edge[bend left=20]  (f);
\path (f) edge[bend left=20]  (b);
\end{tikzpicture}
};
\draw (-2,.75) node[scale=.75]
{
\begin{tikzpicture}[node distance=.5cm]
\tikzstyle{state}=[ellipse,draw,inner sep=1pt]
\node (a) {};
\node[state] (b) [above=of a] {$0$};
\node[state] (c) [right=of a,yshift=3pt] {$2$};
\node[state] (d) [below right=of a,yshift=-5pt] {$\bar 1$};
\node[state] (e) [below left=of a,yshift=-5pt] {$1$};
\node[state] (f) [left=of a,yshift=3pt] {$\bar 2$};
\path (b) edge[bend left=20]  (c);
\path (c) edge[bend left=20]  (d);
\path (d) edge[bend left=20]  (e);
\path (e) edge[bend left=20]  (f);
\path (f) edge[bend left=20]  (b);
\end{tikzpicture}
};
\draw (-.75,2) node[scale=.75] 
{
\begin{tikzpicture}[node distance=.5cm]
\tikzstyle{state}=[ellipse,draw,inner sep=1pt]
\node (a) {};
\node[state] (b) [above=of a] {$0$};
\node[state] (c) [right=of a,yshift=3pt] {$\bar 1$};
\node[state] (d) [below right=of a,yshift=-5pt] {$2$};
\node[state] (e) [below left=of a,yshift=-5pt] {$\bar 2$};
\node[state] (f) [left=of a,yshift=3pt] {$1$};
\path (b) edge[bend left=20]  (c);
\path (c) edge[bend left=20]  (d);
\path (d) edge[bend left=20]  (e);
\path (e) edge[bend left=20]  (f);
\path (f) edge[bend left=20]  (b);
\end{tikzpicture}
};
\draw[green,very thick] (a3b2b112)--(b12)--(0)--(b21)--(a1b2b112);
\draw[green,very thick] (a2b2b112)--(b2b1)--(0)--(12)--(b2b112);
\draw[red,very thick] (ab11)--(b1)--(0)--(1)--(b11);
\draw[red,very thick] (ab22)--(b2)--(0)--(2)--(b22);
\draw[blue,very thick] (b2b112)--(rt)--(b11)--(rb)--(a1b2b112)--(br)--(ab22)--(bl)--(a2b2b112)--(lb)--(ab11)--(lt)--(a3b2b112)--(tl)--(b22)--(tr)--(b2b112);
\end{tikzpicture}
\caption{The faces of the Steinberg torus $\sSigma(C_2)$, with colors corresponding to $W$-orbits. Note the identifications along the boundary.}
\label{fig:C2Steinberg}
\end{figure}
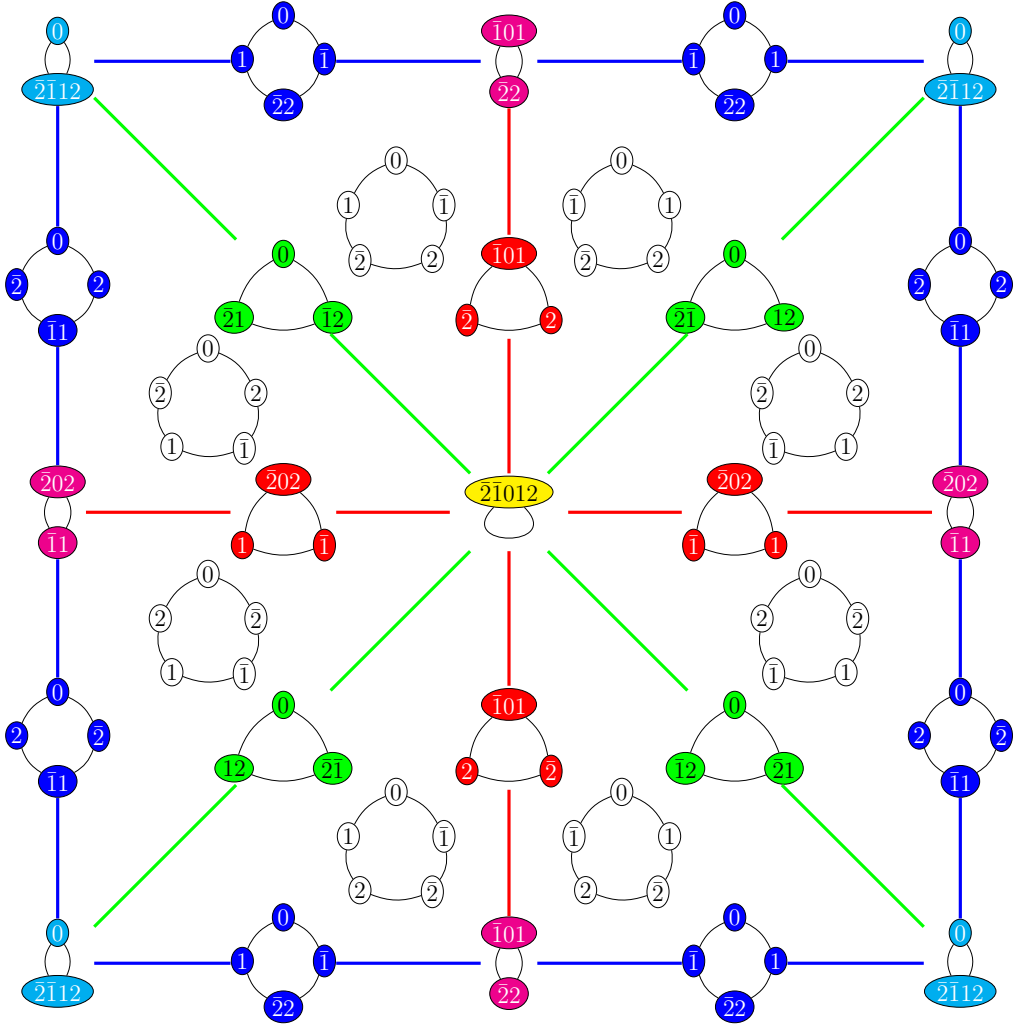

Let $F\in\sSigma(C_n)$ be a face.
The color set of $F$ is 
\[
\col(F)=\{a_0,a_0+a_1,\ldots,a_0+a_1+\cdots+a_{k-1}\}\subseteq\{0,1,\ldots,n\},
\]
where $a_0$ is the number of positive elements in the block of $0$, $a_i$ is the size of the $i$-th block as we proceed around the corresponding necklace (either clockwise or counterclockwise), and the $(k-1)$-th is the last block we encounter before reaching the point diametrically opposed to the block of $0$. Figure \ref{fig:C2Steinberg} shows the orbits in $\Sigma(C_2)$. For example, the edges in red constitute the orbit of color set $\{1,2\}$ and the edges in green that of color set $\{0,2\}$.
The permutation $w_F$ (as in~\eqref{e:wF3}, or as in Corollary~\ref{cor:sDcol}) is obtained by first listing the positive elements in the block of $0$ in increasing order, then proceeding
clockwise and listing all the elements in each block in increasing order, stopping
 at the point diametrically opposed to the block of $0$. If there is a block at that location, we list only its negative elements, in increasing order.
For example, if $F$ corresponds to the necklace~\eqref{e:spinC}, then $\col(F)=\{1,2,4\}$
and $w_F=24\Bar{3}15$.

\subsection{Products of faces in type $C$}\label{ss:prodC}

The product of two faces in the finite Coxeter complex of type $C$ admits 
the same description as in type $A$: we list the nonempty intersections of the blocks lexicographically.
For example, 
\[
\begin{tikzpicture}[baseline=-2.5pt]
\tikzstyle{state1}=[ellipse,fill=white,inner sep=1,draw]
\draw (0,0) node[state1] {$\Bar{2}135$} -- (1.5,0) node[state1] {$\Bar{4}04$} -- (3,0) node[state1] {$\Bar{5}\Bar{3}\Bar{1}2$};
\end{tikzpicture}
\bdot
\begin{tikzpicture}[baseline=-2.5pt]
\tikzstyle{state1}=[ellipse,fill=white,inner sep=1,draw]
\draw (0,0) node[state1] {$1\Bar{3}5$} -- (1.5,0) node[state1] {$\Bar{4}\Bar{2}024$} -- (3,0) node[state1] {$\Bar{5}3\Bar{1}$};
\end{tikzpicture}
\ = \ 
\begin{tikzpicture}[baseline=-2.5pt]
\tikzstyle{state1}=[ellipse,fill=white,inner sep=1,draw]
\draw (0,0) node[state1] {$15$} -- (.8,0) node[state1] {$\Bar{2}$} -- (1.5,0) node[state1] {$3$} -- (2.5,0) node[state1] {$\Bar{4}04$} -- (3.5,0) node[state1] {$\Bar{3}$}
-- (4.2,0) node[state1] {$2$} -- (5,0) node[state1] {$\Bar{5}\Bar{1}$};
\end{tikzpicture}
\,.
\]

The product of a face of the Steinberg torus by a face of the finite Coxeter complex admits the following description, similar to that in type $A$: we replace each block in the necklace representing a torus face by the string of intersections with the blocks of the composition representing a face in the Coxeter complex. Schematically, 
\[
\begin{tikzpicture}[node distance=.5cm,baseline=0pt]
\tikzstyle{state}=[ellipse,draw=black, inner sep=1pt]
\node (a) {};
\node[state] (b) [left=of a] {$A$};
\node[state] (c) [above=of a] {$B$};
\node[state] (d) [right=of a] {$C$};
\path (b) edge[bend left=20]  (c);
\path (c) edge[bend left=20]  (d);
\path[dashed] (d) edge[bend left=40]  (b);
\end{tikzpicture}
\bdot
\begin{tikzpicture}[baseline=-2.5pt]
\tikzstyle{state1}=[ellipse,fill=white,inner sep=1,draw]
\draw (0,0) node[state1] {$S_1$} -- (1,0) node[fill=white,inner sep=1] {$\cdots$} -- (2,0) node[state1] {$S_k$};
\end{tikzpicture}
\quad = \quad 
\begin{tikzpicture}[node distance=.5cm,baseline=0pt]
\tikzstyle{state}=[ellipse,draw=black, inner sep=1pt]
\node (a) {};
\node[state] (b) [left=of a] {$A\cap S_k$};
\node[state] (c) [left =of a, xshift=8pt, yshift=30pt] {$B\cap S_1$};
\node[state] (d) [right=of a, xshift=-8pt, yshift=30pt] {$B\cap S_k$};
\node[state] (e) [right=of a] {$C\cap S_1$};
\path (b) edge[bend left=10]  (c);
\path (d) edge[bend left=10]  (e);
\path[dashed] (c) edge[bend left=30]  (d);
\path[dashed] (e) edge[bend left=50]  (b);
\end{tikzpicture}
\,.
\]
For example,
\[
\begin{tikzpicture}[node distance=.5cm,baseline=-2pt]
\tikzstyle{state}=[ellipse,draw=black,inner sep=1pt]
\node (a) {};
\node[state] (b) [right=of a,xshift=8pt, yshift=8pt] {$4$};
\node[state] (c) [below right=of a] {$\Bar{3}15$};
\node[state] (d) [below left=of a] {$\Bar{5}\Bar{1}3$};
\node[state] (e) [left=of a,xshift=-8pt, yshift=8pt] {$\Bar{4}$};
\node[state] (f) [above = of a] {$\Bar{2}02$};
\path (b) edge[bend left=20]  (c);
\path (c) edge[bend left=20]  (d);
\path (d) edge[bend left=20]  (e);
\path (e) edge[bend left=20]  (f);
\path (f) edge[bend left=20]  (b);
\end{tikzpicture}
\bdot
\begin{tikzpicture}[node distance=.5cm,baseline=-3pt]
\tikzstyle{state1}=[ellipse,fill=white,inner sep=1,draw]
\draw (-2,0) node[state1] {$\Bar{5}3$} -- (-1,0) node[state1] {$\Bar{4}1$} --
(0,0) node[state1] {$\Bar{2} 0 2$} -- (1,0) node[state1] {$\Bar{1}4$} -- (2,0) node[state1] {$\Bar{3}5$};
\end{tikzpicture}
\quad = \quad 
\begin{tikzpicture}[node distance=.5cm,baseline=2pt]
\tikzstyle{state}=[ellipse,draw=black,inner sep=1pt]
\node (o) {};
\node[state] (a) [above=of o] {$\Bar{2}02$};
\node[state] (b) [above right=of o,xshift=6pt,yshift=-2pt] {$4$};
\node[state] (c) [right=of o,xshift=7pt,yshift=-2pt] {$1$};
\node[state] (d) [below right=of o,xshift=-5pt] {$\Bar{3}5$};
\node[state] (e) [below left=of o,xshift=5pt] {$\Bar{5}3$};
\node[state] (f) [left=of o,xshift=-7pt,yshift=-2pt] {$\Bar{1}$};
\node[state] (g) [above left=of o,xshift=-6pt,yshift=-2pt] {$\Bar{4}$};
\path (a) edge[bend left=10]  (b);
\path (b) edge[bend left=10]  (c);
\path (c) edge[bend left=10]  (d);
\path (d) edge[bend left=15]  (e);
\path (e) edge[bend left=10]  (f);
\path (f) edge[bend left=10]  (g);
\path (g) edge[bend left=10]  (a);
\end{tikzpicture}
\,.
\]


\providecommand{\germ}{\mathfrak}

\bibliographystyle{plain}  
\bibliography{cyclic}

\begin{thebibliography}{10}

\bibitem{AbrBro:2008}
Peter Abramenko and Kenneth~S. Brown.
\newblock {\em Buildings}, volume 248 of {\em Graduate Texts in Mathematics}.
\newblock Springer, New York, 2008.
\newblock Theory and applications.

\bibitem{ABN:2004}
Marcelo Aguiar, Nantel Bergeron, and Kathryn Nyman.
\newblock The peak algebra and the descent algebras of types {$B$} and {$D$}.
\newblock {\em Trans. Amer. Math. Soc.}, 356(7):2781--2824, 2004.

\bibitem{APVW:2002}
M.~D. Atkinson, G.~Pfeiffer, and S.~J. Van~Willigenburg.
\newblock The {$p$}-modular descent algebras.
\newblock {\em Algebr. Represent. Theory}, 5(1):101--113, 2002.

\bibitem{BauHoh:2008}
Pierre Baumann and Christophe Hohlweg.
\newblock A {S}olomon descent theory for the wreath products {$G\wr \germ
  S_n$}.
\newblock {\em Trans. Amer. Math. Soc.}, 360(3):1475--1538 (electronic), 2008.

\bibitem{BBHT:1992}
F.~Bergeron, N.~Bergeron, R.~B. Howlett, and D.~E. Taylor.
\newblock A decomposition of the descent algebra of a finite {C}oxeter group.
\newblock {\em J. Algebraic Combin.}, 1(1):23--44, 1992.

\bibitem{Bid:1997}
T.~Patrick Bidigare.
\newblock {\em Hyperplane arrangement face algebras and their associated
  {M}arkov chains}.
\newblock PhD thesis, University of Michigan, 1997.

\bibitem{BjB:2005}
Anders Bj{\"o}rner and Francesco Brenti.
\newblock {\em Combinatorics of {C}oxeter groups}, volume 231 of {\em Graduate
  Texts in Mathematics}.
\newblock Springer, New York, 2005.

\bibitem{BonPfe:2008}
C.~Bonnaf{\'e} and G.~Pfeiffer.
\newblock Around {S}olomon's descent algebras.
\newblock {\em Algebr. Represent. Theory}, 11(6):577--602, 2008.

\bibitem{Bro:2000}
Kenneth~S. Brown.
\newblock Semigroups\textup, rings\textup, and {M}arkov chains.
\newblock {\em J. Theoret. Probab.}, 13(3):871--938, 2000.

\bibitem{BroDia:1998}
Kenneth~S. Brown and Persi Diaconis.
\newblock Random walks and hyperplane arrangements.
\newblock {\em Ann. Probab.}, 26(4):1813--1854, 1998.

\bibitem{Cel:1995}
Paola Cellini.
\newblock A general commutative descent algebra.
\newblock {\em J. Algebra}, 175(3):990--1014, 1995.

\bibitem{DPS:2009}
Kevin Dilks, T.~Kyle Petersen, and John~R. Stembridge.
\newblock Affine descents and the {S}teinberg torus.
\newblock {\em Adv. in Appl. Math.}, 42(4):423--444, 2009.

\bibitem{Ful:2000}
Jason Fulman.
\newblock Affine shuffles, shuffles with cuts, the {W}hitehouse module, and
  patience sorting.
\newblock {\em J. Algebra}, 231(2):614--639, 2000.

\bibitem{Ful:2001}
Jason Fulman.
\newblock Descent algebras, hyperplane arrangements, and shuffling cards.
\newblock {\em Proc. Amer. Math. Soc.}, 129(4):965--973, 2001.

\bibitem{GarReu:1989}
Adriano~M. Garsia and Christophe Reutenauer.
\newblock A decomposition of {S}olomon's descent algebra.
\newblock {\em Adv. Math.}, 77(2):189--262, 1989.

\bibitem{Hum:1990}
James~E. Humphreys.
\newblock {\em Reflection groups and {C}oxeter groups}, volume~29 of {\em
  Cambridge Studies in Advanced Mathematics}.
\newblock Cambridge University Press, Cambridge, 1990.

\bibitem{LamPos:2007}
Thomas Lam and Alexander Postnikov.
\newblock Alcoved polytopes. {I}.
\newblock {\em Discrete Comput. Geom.}, 38(3):453--478, 2007.

\bibitem{LamPos:2012}
Thomas Lam and Alexander Postnikov.
\newblock Alcoved polytopes ii, available at \url{arXiv:1202.4015}.

\bibitem{MatOre:2008}
Andrew Mathas and Rosa~C. Orellana.
\newblock Cyclotomic {S}olomon algebras.
\newblock {\em Adv. Math.}, 219(2):450--487, 2008.

\bibitem{Mos:1989}
Paul Moszkowski.
\newblock G\'en\'eralisation d'une formule de {S}olomon relative \`a l'anneau
  de groupe d'un groupe de {C}oxeter.
\newblock {\em C. R. Acad. Sci. Paris S\'er. I Math.}, 309(8):539--541, 1989.

\bibitem{Pat:1994}
Fr{\'e}d{\'e}ric Patras.
\newblock L'alg\`ebre des descentes d'une big\`ebre gradu\'ee.
\newblock {\em J. Algebra}, 170(2):547--566, 1994.

\bibitem{Pet:2005}
T.~Kyle Petersen.
\newblock Cyclic descents and {$P$}-partitions.
\newblock {\em J. Algebraic Combin.}, 22(3):343--375, 2005.

\bibitem{Sal:2008}
Franco~V. Saliola.
\newblock On the quiver of the descent algebra.
\newblock {\em J. Algebra}, 320(11):3866--3894, 2008.

\bibitem{Sol:1976}
Louis Solomon.
\newblock A {M}ackey formula in the group ring of a {C}oxeter group.
\newblock {\em J. Algebra}, 41(2):255--264, 1976.

\bibitem{Ste:1968}
Robert Steinberg.
\newblock {\em Endomorphisms of linear algebraic groups}.
\newblock Memoirs of the American Mathematical Society, No. 80. American
  Mathematical Society, Providence, R.I., 1968.

\bibitem{Tit:1976}
Jacques Tits.
\newblock Two properties of {C}oxeter complexes.
\newblock {\em J. Algebra}, 41(2):265--268, 1976.
\newblock Appendix to ``A Mackey formula in the group ring of a Coxeter group''
  (J. Algebra {{\bf{4}}1} (1976), no. 2, 255--264) by Louis Solomon.

\end{thebibliography}

\end{document}